\author {Marco Grandis and Robert Par\'e}
\thanks{Work supported by a research contract of the University of Genoa.}
\address{Dipartimento di Matematica, Universit\`a di Genova. Via Dodecaneso 35, 16146-Genova, Italy\\Department of Mathematics and Statistics, Dalhousie University,
 Halifax, NS, Canada, B3H 4R2}
\title {Intercategories}
\keywords{interchange law, intercategory, triple category, 2-category, double category, lax and colax functor, pseudocategory}
\def\ic{\bf\sf}
\def\Doub{{\mathbb D}{\rm bl}}
\def\Dlax{{\cal L}{\it x}{\cal D}{\it bl}}
\def\Dcolax{{\cal C}{\it x}{\cal D}{\it bl}}
\def\Cat{\mbox{{$\cal C$}\it at}}
\def\CAT{\mbox{{$\cal CAT$}}}
\newcommand{\ov}[1]{{\bar{#1}}}
\newcommand{\ovv}[1]{{\tilde{#1}}}
\newcommand{\todd}[2]{\xymatrix@1{\ar[r]|\bb^{#1}_{#2}&}}
\newcommand{\todo}[1]{\xymatrix@1{\ar[r]|\bb^{#1}&}}
\newcommand{\todu}[1]{\xymatrix@1{\ar[r]|\bb_{#1}&}}
\newcommand{\todol}[1]{\xymatrix@1@C=40pt{\ar[r]|\bb^{#1}&}}
\newcommand{\tod}{\xymatrix@1{\ar[r]|\bb&}}
\newcommand{\toc}{\xymatrix@1{\ar[r]|\cc&}}
\newtheorem{theorem}{Theorem}
\newtheorem{proposition}{Proposition}
\def\Cat{{\cal C}{\it at}}
\newbox\bbox
\def\bb{\usebox{\bbox}}
\newbox\cbox
\def\cc{\usebox{\cbox}}
\begin{document}

\maketitle

\begin{abstract} We introduce a $3$-dimensional categorical structure which we call intercategory. This is a kind of weak triple category with three kinds of arrows, three kinds of $2$-dimensional cells and one kind of $3$-dimensional cells. In one dimension, the compositions are strictly associative and unitary, whereas in the other two, these laws only hold up to coherent isomorphism. The main feature is that the interchange law between the second and third compositions does not hold, but rather there is a non-invertible comparison cell which satisfies some coherence conditions. We introduce appropriate morphisms of intercategory, of which there are three types, and cells relating these. We show that these fit together to produce a strict triple category of intercategories.

\end{abstract}


\section*{Introduction}

With this paper, we undertake the study of intercategories (short for interchange categories). These are special kinds of three-dimensional categories. Roughly speaking, they consist of two double categories that share the same horizontal structure, and whose operations are related by interchange morphisms.

A triple category \cite{E} is a category object in double categories which themselves are category objects in ${\bf Cat}$. An analysis of this reveals that there are three kinds of arrows each with its own category structure, and for any two of these, appropriate double cells with two compositions producing a double category. Finally there are triple cells which look like blocks that can be pasted in three directions, satisfying interchange for any two of them. The generalization to higher dimensions is straightforward.

In practice this is too rigid. Already for double categories, where the main examples involve the composition of spans or profunctors of some sort, there is a need to relax the associativity and unit laws to allow isomorphisms. Thus for a weak double category there is one direction in which the equations hold strictly and a second one in which they hold up to coherent isomorphism, just like for bicategories. Any attempt to allow the equations to hold up to isomorphism in both directions simultaneously only leads to a vicious circle from which there seems to be no escape. Yet it would be nice to be able to do this. For example, transposing the two directions is a useful duality for (strict) double categories but is not available for weak ones.

The quintet construction is another case in point. From a $2$-category we get a double category whose horizontal and vertical arrows are the $1$-cells of the $2$-category, and whose double cells are squares with a $2$-cell in them. This is a nice ``symmetric'' way of turning a $2$-category into a double one and is in fact the universal solution to providing companions for every $1$-cell \cite{GP}. The quintet construction works perfectly well for bicategories. It just doesn't give a weak double category. But it almost does. 

These considerations prompted Verity to introduce what he called double bicategories \cite{V}. The name may suggest that this is a four-dimensional concept although the intention is simply a slight relaxation of the double category notion. As we shall see in \cite{PartII}, it is really a three-dimensional concept, an intercategory in fact. So intercategories may also be viewed as weakened double categories held together by a more basic strict structure.

There are many ways of weakening the notion of triple category. The theory of weak $n$-categories would suggest having composition in the first direction satisfy the category equations strictly, in the second direction up to coherent isomorphism and in the third up to coherent equivalence. The coherence conditions would then mirror those for tricategories \cite{GPS} with the possibility of some extra ones involving identities. However, we know of no genuine example of this kind of triple category except for those coming from tricategories, so there seems to be no advantage in developing such a theory at this time.

Our notion of intercategory is simpler and motivated by a number of examples of different kinds. As above there are three kinds of arrows and $2$-dimensional cells, and one kind of $3$-dimensional ones. The first type of arrow carries a category structure whereas the second {\em and} the third only satisfy the equations up to isomorphisms of the first type. In fact the first and second type of arrows with the corresponding cells form a weak double category, and so do the first and third. And here is the crucial point: the cells whose boundaries are arrows of the second and third kind have two compositions just like the other two but the interchange law doesn't hold. There is instead a comparison $3$-cell, in a specified direction, satisfying coherence conditions just like those for the duoidal categories of \cite{BS} (called $2$-monoidal categories in \cite{AM}).

Duoidal categories are one of our motivating examples. They are ``clearly'' a three-dimensional structure and, just as monoidal categories can be thought of as one-object bicategories, it is tempting to try to interpret duoidal categories as the one-object version of some more general kind of $3$-dimensional structure. And indeed duoidal categories are special kinds of intercategories. This gives a third way of viewing intercategories, as duoidal categories with several objects.

The plan for the paper is as follows. We introduce intercategories as a kind of lax triple category and set up our notation in Section \ref{intercategories}. We postpone the discussion of the coherence conditions until Sections \ref{3x3} and \ref{depiction}. After some double category preliminaries in Section \ref{prelim}, we give two more conceptual presentations of intercategory in Section \ref{pscat}. The equivalence of these is established in Section \ref{3x3} where a more symmetric notion is introduced. Here all the coherence conditions are given. In Section \ref{depiction} we reformulate these conditions using the more transparent $2$-dimensional notation of Section \ref{intercategories}.

In Section \ref{mor} we study morphisms of intercategories. There are three types, like for duoidal categories. There are those that are lax in both the horizontal and vertical directions, and those that are colax in them. There are also those that are colax in the horizontal direction and lax in the vertical. The other way around, horizontally lax and vertically colax, doesn't work because the arrows that should give the coherence diagrams do not compose. There are also double cells relating the three types of morphism in pairs. Finally, these double cells can be the faces of cubes whose commutativity is well-posed. The main theorem of the paper is Theorem \ref{mainth} which says that all of this fits together to produce a strict triple category of intercategories.

Somewhat mysteriously, horizontal intercategories only produce two of the three types of morphism and one kind of cell. Vertical intercategories give one of these and a new one, with their own cells. It is with double pseudocategories that we get all three kinds of morphism and three kinds of cell. The mystery is resolved in Section \ref{mystery} where we consider pseudocategories, not in a $2$-category, but in the {\em double category} $ \Doub $. With Theorem \ref{lastth} we can now rest assured that all presentations are equivalent.


\section{Intercategories}\label{intercategories}

An {\em intercategory} ${\ic A} $ is a $3$-dimensional structure whose constituents look like cubes
$$
\bfig

\square(0,300)/@{>}|{\cc}`@{>}|{\bb}``/[A ` B `\ov{A} `;h ` v ` `]

\square(300,0)/@{>}|{\cc}`@{>}|{\bb}`@{>}|{\bb}`@{>}|{\cc}/[A' ` B' `\ov{A'}`\ov{B'}; h' ` v' `w' `\ov{h'}]

\morphism(0,800)|b|<300,-300>[A` A';f]

\morphism(500,800)<300,-300>[B ` B';g]

\morphism(0,300)|b|<300,-300>[\ov{A}`\ov{A'};\ov{f}]

\place(550,250)[\scriptstyle \alpha']

\place(150,400)[\scriptstyle \psi]

\place(400,650)[\scriptstyle \phi]

\efig
$$
It has:
\begin{itemize}

   \item {\em objects} ($A, B, \ldots$)

   \item three kinds of arrow; {\em transversal} ($f, g, \ldots$), {\em horizontal} ($h, h', \ldots$) and {\em vertical} ($v, v', \ldots$)

   \item three kinds of ($2$-dimensional) cell, {\em horizontal} ($\phi, \ldots$), {\em vertical} ($\psi, \ldots$), {\em basic} ($\alpha', \ldots$)

   \item $3$-dimensional cubes (not named in the diagram above).

\end{itemize}

\noindent It also has three kinds of composition:

\begin{itemize}

   \item {\em transversal} for transversal arrows, horizontal and vertical cells, and cubes. It is denoted by $ \cdot$ or juxtaposition, which is strictly associative and unitary. The identities are denoted $ 1 $

   \item {\em horizontal} for horizontal arrows, horizontal and basic cells, and cubes. It is denoted by $ \circ $ or $|$, and is associative and unitary up to coherent transversal isomorphism

   \item {\em vertical} for vertical arrows, vertical and basic cells, and cubes. It is denoted by $\bullet $ or $-$, and is associative and unitary up to coherent transversal isomorphism

\end{itemize}

Mnemonic: As we go from transversal to horizontal to vertical, the notation becomes heavier (uses more ink).

These structures interact as follows:

\begin{itemize}

   \item transversal and horizontal form a weak double category

   \item transversal and vertical form a weak double category

   \item horizontal and vertical are related by {\em interchangers}

$$
 \chi : (\alpha \circ \beta) \bullet (\ov{\alpha}\circ \ov{\beta}) \to (\alpha \bullet \ov{\alpha}) \circ (\beta \bullet \ov{\beta}) 
$$
which is also written as
$$
\chi : \frac{\alpha | \beta}{\ov{\alpha} | \ov{\beta}}     \to \left.\dfrac{\alpha}{\ov{\alpha}}\right|\dfrac{\beta}{\ov{\beta}}
$$
An important feature of this notation is that the $ \alpha, \beta, \ov{\alpha}, \ov{\beta} $ don't change place.

The other interchangers are
$$
\mu : \id_v \bullet \id_{\ov{v}} \to \id_{v\bullet \ov{v}}, \quad  \delta : \Id_{h\circ h'} \to \Id_h \circ \Id_{h'}, \quad  \tau : \Id_{\id_A} \to \id_{\Id_A}
$$
also written as
$$
\mu :  \frac{\id_v}{\id_{\ov{v}}} \to \id_{\frac{v}{\ov{v}}}, \quad  \delta : \Id_{h | h'} \to \Id_h | \Id_{h'}, \quad  \tau : \Id_{\id_A} \to \id_{\Id_A}
$$
These are transversally special cubes, i.e.~their horizontal and vertical faces are transversal identities. They compose in the transversal direction.

For example, $ \mu $ is the cube (shown as a morphism from back to the front)
$$
\bfig

\square(0,300)/@{>}|{\cc}`@{>}|{\bb}`@{>}|{\bb}`@{>}|{\cc}/[\ov{A} `\ov{A} `\ovv{A} `\ovv{A};`\ov{v}`\ov{v}`]

\square(0,800)/@{>}|{\cc}`@{>}|{\bb}`@{>}|{\bb}`@{>}|{\cc}/[A `A`\ov{A} `\ov{A};\id_A `v`v`]

\square(300,0)/``@{>}|{\bb}`@{>}|{\cc}/[`\ov{A} `\ovv{A}`\ovv{A};``\ov{v}`\id_{\ov{A}}]

\morphism(800,1000)/@{>}|{\bb}/<0,-500>[A `\ov{A};v]

\morphism(0,300)/=/<300,-300>[\ovv{A}`\ovv{A};]

\morphism(500,300)/=/<300,-300>[\ovv{A}`\ovv{A};]

\morphism(500,800)/=/<300,-300>[\ov{A}`\ov{A};]

\morphism(500,1300)/=/<300,-300>[A`A;]

\place(250,550)[\scriptstyle \id_{\ov{v}}]

\place(250,1050)[\scriptstyle \id_v]

\morphism(610,950)/=/<50,-50>[`;]

\morphism(610,450)/=/<50,-50>[`;]

\morphism(350,175)/=/<50,-50>[`;]

\place(1300,650)[\to^{\mu(v,\ov{v})}]

\square(1800,800)/@{>}|{\cc}`@{>}|{\bb}``/[A `A`\ov{A}`;\id_A`v``]

\morphism(1800,800)|l|/@{>}|{\bb}/<0,-500>[\ov{A}`\ovv{A};\ov{v}]

\square(2100,0)/@{>}|{\cc}`@{>}|{\bb}`@{>}|{\bb}`@{>}|{\cc}/<500,1000>[A`A`\ovv{A}`\ovv{A};`v\cdot\ov{v}`v\cdot\ov{v}`\id_{\ovv{A}}]

\morphism(1800,300)/=/<300,-300>[\ovv{A}`\ovv{A};]

\morphism(1800,1300)/=/<300,-300>[A`A;]

\morphism(2300,1300)/=/<300,-300>[A`A;]

\place(2350,500)[\scriptstyle \id_{v\cdot\ov{v}}]

\morphism(1900,700)/=/<50,-50>[`;]

\morphism(2150,1200)/=/<50,-50>[`;]

\efig
$$
Naturality is in the transversal direction. For example, $\id $ can also be applied to vertical cells $ \psi $ giving a cube $ \id_\psi $

$$
\bfig

\square(0,300)/@{>}|{\cc}`@{>}|{\bb}``/[A ` A `\ov{A} `;\id_A `v ` `]

\square(300,0)/@{>}|{\cc}`@{>}|{\bb}`@{>}|{\bb}`@{>}|{\cc}/[A' ` A' `\ov{A'}`\ov{A'};  ` v' `v' `\id_{\ov{A'}}]

\morphism(0,800)|b|<300,-300>[A` A';f]

\morphism(500,800)<300,-300>[A ` A';f]

\morphism(0,300)|b|<300,-300>[\ov{A}`\ov{A'};\ov{f}]

\place(550,250)[\scriptstyle \id_{v'}]

\place(150,400)[\scriptstyle \psi]

\place(400,650)[\scriptstyle \id_f]

\efig
$$
needed to express naturality of $ \mu $

$$
\bfig

\square[\dfrac{\id_v}{\id_{\ov{v}}}   `\id_\frac{v}{\ov{v}} `\dfrac{\id_{v'}}{\id_{\ov{v'}}}`\id_\frac{v'}{\ov{v'}};
\mu(v, \ov{v}) `\frac{\id_\psi}{\id_{\ov{\psi}}}
` \id_\frac{\psi}{\ov{\psi}}  
`\mu(v', \ov{v'})]

\efig
$$

\end{itemize}

\noindent These have to satisfy all the obvious coherence conditions, of which there are many (32 by one way of counting), although this is below average for weak $3$-dimensional structures. We return to this in Section \ref{3x3}. 

It is thus desirable to have a more abstract theoretical presentation. This will be especially useful in studying the various kinds of morphism of intercategories. The different points of view provide extra insight into the concepts, and can prove useful in checking the various examples. These different ways of viewing intercategories are presented in the following sections (\ref{pscat} and \ref{3x3}), where the coherence conditions will be made explicit. 

We give two examples to guide the reader through the maze of $3$-dimensional commutative diagrams. Many more may be found in \cite{PartII}.

The first is given by duoidal categories, categories $ {\bf D} $ with two tensors $ \otimes $ and $ \boxtimes $ related by lax interchange (see \cite{AM, BCZ, BS}). (We follow most closely the notation of \cite{BS} although they use $ \gamma$ for the interchanger which we call $ \chi $.) This can be considered as a degenerate intercategory with one object. The objects of $ {\bf D} $ provide the basic cells, and the morphisms of ${\bf D} $ the cubes. The remaining constituents are all identities. A general cube looks like
$$
\bfig

\square(0,300)/=`=``/[*`*`*`;` ` `]

\square(300,0)/=`=`=`=/[*`*`*`*; ```]

\morphism(0,800)|b|/=/<300,-300>[*`*;]

\morphism(500,800)/=/<300,-300>[* ` *;]

\morphism(0,300)|b|/=/<300,-300>[*`*;]

\place(550,250)[\scriptstyle D']

\morphism(100,450)/=/<70,-70>[`;]

\morphism(400,700)/=/<70,-70>[`;]

\efig
$$ 
with an arrow $ d : D \to D' $ inside. Horizontal and vertical composition are given by $ \otimes $ and $ \boxtimes $ respectively. In this example all of the interchangers $ \chi $, $ \delta $, $ \mu $, $ \tau$ can be non-invertible, for example when the horizontal composition is product and the vertical coproduct.

The second example is given by spans of cospans in a category with pullbacks and pushouts. A general cube looks like
$$
\bfig\scalefactor{.75}

\square/<-`<-`<-`<-/[\cdot `\cdot `\cdot `\cdot;```]

\square(500,0)/>`<-`<-`>/[\cdot`\cdot`\cdot`\cdot;```]

\square(0,500)/<-`>`>`<-/[\cdot `\cdot `\cdot `\cdot;```]

\square(500,500)/>`>`>`>/[\cdot `\cdot `\cdot `\cdot;```]

\morphism(0,1250)/>/<-500,0>[\cdot`\cdot;]

\morphism(-500,1250)/>/<500,-250>[\cdot`\cdot;]

\morphism(0,1250)/>/<500,0>[\cdot`\cdot;]

\morphism(0,1250)/>/<500,-250>[\cdot`\cdot;]

\morphism(500,1250)/>/<500,-250>[\cdot`\cdot;]

\morphism(-500,1250)/>/<0,-500>[\cdot`\cdot;]

\morphism(-500,750)/>/<500,-250>[`\cdot;]

\morphism(-500,750)/<-/<0,-500>[`\cdot;]

\morphism(-500,250)/>/<500,-250>[\cdot`\cdot;]

\efig
$$
Horizontal (resp. vertical) composition is given by pullback (resp. pushout). Note that because the interchanger $ \chi $ is not invertible, the spans must be horizontal and the cospans vertical. More details may be found in \cite{PartII}.


\section{Preliminaries on double categories}\label{prelim}

Recall from \cite{GP} the {\em strict} double category $ \Doub $ whose objects are weak double categories, horizontal arrows are lax functors (of double categories), and vertical arrows are colax functors. It is, of course, the double cells that are of most interest, given the well-known problems with lax transformations in general. A double cell
$$
\bfig
\square/>`@{>}|{\bb}`@{>}|{\bb}`>/[{\mathbb A} ` {\mathbb B} `{\mathbb C} `{\mathbb D};F ` U ` V `G]

\place(250,250)[\scriptstyle \pi]
\efig
$$
is given by

\begin{itemize}

\item [(1)] a horizontal arrow $ \pi A : VFA \to GUA $ for each object $ A $ of $ {\mathbb A} $, and

\item [(2)] a double cell
$$
\bfig
\square/>`@{>}|{\bb}`@{>}|{\bb}`>/[VFA ` GUA `VF \ov{A} ` GU\ov{A};\pi A ` VFu ` GUu `\pi\ov{A}]

\place(250,250)[\scriptstyle \pi u]
\efig
$$
for each vertical arrow $ u : A \tod \ov{A} $ of $ {\mathbb A} $.

\end{itemize}

\noindent These are required to satisfy a number of more or less obvious compatibility conditions which can be found in \cite{GP}. Note, in passing, that in writing out these conditions one realizes that $ \pi A $ has to be in the direction given.

From this double category $ \Doub $ we can extract two $2$-categories $ \Dlax $ and $ \Dcolax $ by restricting the vertical (resp.\ horizontal) arrows to be identities. So the objects of $ \Dlax $ are weak double categories, the arrows are lax functors, and the $2$-cells are horizontal transformations. $ \Dcolax $ has the same objects, the colax functors as arrows and again horizontal transformations as $2$-cells.

\noindent {\sc Remark}: It's not unreasonable to wonder where the usual problems of making lax functors into a $2$-category have gone, given that $2$-categories and bicategories can be considered as special weak double categories. In the case of $2$-categories considered as horizontal double categories, lax functors are just $2$-functors and horizontal transformations are $2$-natural transformations so there are no problems there. If, on the other hand, bicategories are considered as vertical weak double categories (i.e.\ horizontal arrows are identities), then lax functors are what are usually called lax functors but horizontal transformations are now the (dual) icons of Lack \cite{L}, whose main feature is precisely that they are the $2$-cells of a $2$-category.

Recall that a lax functor $ F : {\mathbb A} \to {\mathbb B} $ takes objects, horizontal and vertical arrows and double cells of $ {\mathbb A} $ to like ones in $ {\mathbb B} $ as illustrated in
$$
\bfig
\square/>`@{>}|{\bb}`@{>}|{\bb}`>/[A ` A'`\ov{A}`\ov{A'};a ` v` v'`\ov{a}]
\place(250,250)[\scriptstyle \alpha]
\place(800,250)[\longmapsto]
\place(800,300)[\scriptstyle F]
\square(1200,0)/>`@{>}|{\bb}`@{>}|{\bb}`>/[FA`FA'`F\ov{A}`F\ov{A'};Fa `Fv`Fv'`F\ov{a}]
\place(1450,250)[\scriptstyle F\alpha]
\efig
$$
$ F $ is required to be functorial in the horizontal direction, whereas in the vertical direction comparison special cells (horizontal domains and codomains identities) are provided
$$
\bfig
\square(0,250)/=`@{>}|{\bb}`@{>}|{\bb}`=/[FA `FA`FA`FA;`\id_{FA}`F(\id_A)`]
\place(250,500)[\scriptstyle \phi A]
\square(1000,0)/`@{>}|{\bb}``=/[F\ov{A}``F\ovv{A}`F\ovv{A};`F\ov{v}``]
\square(1000,500)/=`@{>}|{\bb}``/[FA`FA`F\ov{A}`;`Fv``]
\square(1000,0)/=``@{>}|{\bb}`=/<500,1000>[FA` FA`F\ovv{A}`F\ovv{A};``F(v \cdot \ov{v})`]
\place(1300,500)[\scriptstyle \phi(v,\ov{v})]
\efig
$$
$$
\phi A : \id_{FA} \to F(\id_A) \ \ \ \ \   \   \phi(v,\ov{v}) : F v \cdot F\ov{v} \to F (v \cdot \ov{v})
$$
which are required to satisfy
$$
\bfig
\square/>`>`>`<-/<800,500>[Fv \cdot \id_{F\ov{A}} `Fv\cdot F(\id_{\ov{A}}) `Fv` F(v\cdot \id_{\ov{A}});Fv\cdot \phi \ov{A} ` \rho ` \phi(v,\id_{\ov{A}}) `F\rho]
\square(1500,0)/>`>`>`<-/<800,500>[\id_{F A} \cdot Fv ` F(\id_A)\cdot Fv  `Fv` F(\id_A\cdot v);
        \phi A \cdot Fv `\lambda ` \phi(\id_A,v)`F\lambda]
\efig
$$
$$
\bfig
\square/>`>``>/<1000,500>[F v \cdot (F\ov{v}\cdot F\ovv{v})
       ` F v\cdot F(\ov{v}\cdot \ovv{v})
        `(F v\cdot F \ov{v}) \cdot F\ovv{v}
        `F(v\cdot \ov{v}) \cdot F\ovv{v};
                      Fv\cdot \phi(\ov{v},\ovv{v})
                      `\kappa `
                       `\phi(v,\ov{v})\cdot F\ovv{v}]
\square(1000,0)/>``>`>/<1000,500>[Fv\cdot F(\ov{v}\cdot \ovv{v})
           `F(v\cdot(\ov{v}\cdot \ovv{v}))
            `F(v\cdot \ov{v}) \cdot F\ovv{v}
             `F((v\cdot\ov{v})\cdot\ovv{v});
              \phi(v, \ov{v} \cdot \ovv{v})
              ``F\kappa
              `\phi(v\cdot\ov{v}, \ovv{v})]
\efig
$$
If the $ \phi$'s are isomorphisms, $ F $ is said to be {\em strong}, and if they are identities, $ F $ is {\em strict}. So strict functors preserve, not only vertical identities and composition on the nose, but also the structural isomorphisms $ \rho $, $ \lambda $, $\kappa $, as can be seen from the above diagrams.

\ 

\noindent {\sc Remark:} Strict functors go counter to the received wisdom about morphisms of (weak) two dimensional structures but they arise surprisingly often, e.g.\ the projections out of comma objects are strict, and are very useful. The following result, whose proof is straightforward, illustrates this and is crucial for us.

\begin{proposition} Let $ F : {\mathbb A} \to {\mathbb C} $ and $ G : {\mathbb B} \to {\mathbb C} $ be strict functors. Then the set-theoretical pullback of $ F $ and $ G $ on objects, arrows and cells defines a weak double category $ {\mathbb A} \times_{\mathbb C} {\mathbb B}$ which is the $2$-pullback of $ F $ and $ G $ in $\Dlax $. The projections onto $ {\mathbb A} $ and $ {\mathbb B} $ are strict functors.

\end{proposition}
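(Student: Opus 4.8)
The plan is to construct $\mathbb{A}\times_{\mathbb{C}}\mathbb{B}$ levelwise --- pairs of objects, arrows and cells with matching images under $F$ and $G$ --- and to show that each piece of weak double category structure is inherited componentwise from $\mathbb{A}$ and $\mathbb{B}$, the hypothesis of strictness being exactly what is needed to keep the structural isomorphisms, and later the comparison cells of incoming lax functors, inside the set-theoretical pullback.

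First I would set up the data: objects are pairs $(A,B)$ with $FA=GB$; horizontal arrows, vertical arrows and cells are pairs of arrows (resp.\ cells) of the same kind whose $F$- and $G$-images agree. Horizontal identities, domains, codomains and composition are defined componentwise; since lax functors are strictly functorial in the horizontal direction these operations preserve the pullback condition, and they are strictly associative and unitary because they are in each factor. Vertical composition of arrows and of cells is likewise componentwise, and I would take the structural isomorphisms to be $\rho=(\rho^{\mathbb{A}},\rho^{\mathbb{B}})$, $\lambda=(\lambda^{\mathbb{A}},\lambda^{\mathbb{B}})$, $\kappa=(\kappa^{\mathbb{A}},\kappa^{\mathbb{B}})$. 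The key check here is that these pairs really are cells of the pullback: this is precisely the remark made just before the statement, that a strict functor preserves $\rho$, $\lambda$, $\kappa$ on the nose, so $F\rho^{\mathbb{A}}=\rho^{\mathbb{C}}=G\rho^{\mathbb{B}}$ evaluated at the relevant (equal) data, and similarly for $\lambda$ and $\kappa$. Then the pentagon and the triangle identities for $\mathbb{A}\times_{\mathbb{C}}\mathbb{B}$ hold because they hold in each component and a cell of the pullback is determined by its two components. This makes $\mathbb{A}\times_{\mathbb{C}}\mathbb{B}$ a weak double category, and the two projections $P_{\mathbb{A}}$, $P_{\mathbb{B}}$, being defined componentwise, preserve everything strictly --- in particular $\rho$, $\lambda$, $\kappa$ --- so they are strict functors with $FP_{\mathbb{A}}=GP_{\mathbb{B}}$.

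Next I would verify the universal property, i.e.\ that $\mathbb{A}\times_{\mathbb{C}}\mathbb{B}$ with these projections represents, naturally in $\mathbb{X}$, the category $\Dlax(\mathbb{X},\mathbb{A})\times_{\Dlax(\mathbb{X},\mathbb{C})}\Dlax(\mathbb{X},\mathbb{B})$. For the one-dimensional part, given lax functors $R:\mathbb{X}\to\mathbb{A}$, $S:\mathbb{X}\to\mathbb{B}$ with $FR=GS$ as lax functors, the mediating $\langle R,S\rangle$ is forced to be componentwise on objects, arrows and cells, with comparison cells $(\phi^{R},\phi^{S})$; since $F$ is strict, the comparison cells of the composite $FR$ are just $F\phi^{R}$ (no extra whiskering by a comparison cell of $F$), and likewise those of $GS$ are $G\phi^{S}$, so $FR=GS$ gives $F\phi^{R}=G\phi^{S}$ and hence $(\phi^{R},\phi^{S})$ is a cell of the pullback; the lax functor axioms for $\langle R,S\rangle$ then hold componentwise, and uniqueness is immediate from $P_{\mathbb{A}}\langle R,S\rangle=R$, $P_{\mathbb{B}}\langle R,S\rangle=S$. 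For the two-dimensional part, given lax functors $H,K:\mathbb{X}\to\mathbb{A}\times_{\mathbb{C}}\mathbb{B}$ and horizontal transformations $\sigma:P_{\mathbb{A}}H\to P_{\mathbb{A}}K$, $\sigma':P_{\mathbb{B}}H\to P_{\mathbb{B}}K$ with $F\sigma=G\sigma'$, I would define $\theta$ by $\theta X=(\sigma X,\sigma' X)$ on objects and $\theta u=(\sigma u,\sigma' u)$ on vertical arrows $u$; whiskering a horizontal transformation by a strict functor again adds nothing, so $F(\sigma X)=G(\sigma' X)$ and $F(\sigma u)=G(\sigma' u)$, making $\theta$ a well-defined horizontal transformation $H\to K$, and it is the unique one projecting to $\sigma$ and $\sigma'$; naturality in $\mathbb{X}$ is then routine.

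The hard part is not really hard: the one point that needs attention, and the only place where strictness of $F$ and $G$ is genuinely used, is showing that the structural isomorphisms $\rho,\lambda,\kappa$ and the comparison cells $\phi$ of the incoming lax functors actually lie in the set-theoretical pullback. For merely strong (let alone lax) $F$ and $G$ this already fails --- $F\rho^{\mathbb{A}}$ and $\rho^{\mathbb{C}}$ then differ by comparison cells --- so the naive pullback would carry no weak double category structure. Once this is in hand, everything is a componentwise bookkeeping exercise, which is why the proof is straightforward.
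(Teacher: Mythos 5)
Your proposal is correct and is exactly the ``straightforward'' argument the paper alludes to but omits: the paper gives no proof of this proposition, only the preceding remark that strict functors preserve $\rho$, $\lambda$, $\kappa$ on the nose, which you correctly identify as the one point where strictness is genuinely needed (both for the structural isomorphisms of the pullback and for the comparison cells of mediating lax functors and transformations). Nothing to add.
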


\noindent {\sc Remark}: The $2$-pullback of $ F $ and $ G $ may exist even when $ F $ and $ G $ are not strict. For example, the images of $ F $ and $ G $ may be disjoint, in which case the empty double category is the $2$-pullback. It is not clear exactly which pullbacks exist in $ \Dlax $, nor what the $2$-categorical property of strict functors is that makes their pullbacks better.


\section{Horizontal and vertical intercategories}\label{pscat}

The notion of pseudocategory in a $2$-category with $2$-pullbacks is pretty clear and has already appeared in print \cite{PC}. We generalize this a bit to suit our purposes: we don't assume that $2$-pullbacks exist but restrict our definition to diagrams where the requisite ones do.

Given a diagram
$$
B \two^{\partial_0}_{\partial_1} A
$$
in a $2$-category ${\cal A} $, the $2$-limit of the diagram
$$
\bfig\scalefactor{.7}
\Vtriangle/`>`>/<350,300>[B`B`A;`\partial_1`\partial_0]
\Atriangle(350,0)/>`>`/<350,300>[B`A`A;``]
\Vtriangle(700,0)/`>`>/<350,300>[B``A;`\partial_1`\partial_0]
\place(1500,300)[\cdots]
\Vtriangle(1600,0)/`>`>/<350,300>[`B`A;`\partial_1`\partial_0]
\efig
$$
can be constructed using $2$-pullbacks. We call this limit the {\em iterated $2$-pullback} and denote it by $ B\times_A B\times_A \dots \times_A B = B^{(n)} $ and the projections by
$$
p_i : B^{(n)} \to B,\  i = 1, \dots, n.
$$
Also let $ B^{(0)} = A $ and $ B^{(1)} = B $. If $ f : B^{(n)} \to B^{(m)} $ we will denote $ B^{(r)} \times_A f\times_A B^{(s)} $ by
$$
f_{r+1,\dots, r+n} : B^{(r+n+s)} \to B^{(r+m+s)}.
$$

\begin{definition}\label{icdef} A {\em pseudocategory object} in $ {\cal A} $ is a diagram $ B\two^{\partial_0}_{\partial_1} A $ for which the iterated $2$-pullbacks $ B^{(n)} $ exist, together with morphisms
   \begin{itemize}
           \item[(1)] $\id : A \to B $

            \item[(2)] $ m : B^{(2)} \to B $

and iso $2$-cells

            \item[(3)]
$
\bfig
\place(100,0)[\ \ \ ]
\square(500,0)[B^{(3)} ` B^{(2)} ` B^{(2)} ` B; m_{23} `m_{12} `m`m]
\place(750,250)[{\scriptstyle \kappa} \Downarrow]
\efig
$

           \item[\ \ \ \ \ \ (4)/(5)]
$
\bfig
\place(100,0)[\ \ \ ]
\qtriangle(500,0)/>`>`>/<650,500>[B`B^{(2)}`B;\id_1`=`m]
\place(1000,350)[{\scriptstyle \lambda} \Downarrow]
\ptriangle(1150,0)/<-`>`>/<650,500>[B^{(2)} `B`B;\id_2``=]
\place(1400,350)[{\scriptstyle \rho} \Downarrow]

\efig
$

\noindent satisfying the usual coherence conditions (pentagon, etc.).

   \end{itemize}

Note that a pseudocategory in $ \Cat $ is exactly a weak double category.

\end{definition}

\begin{definition} \label{hintercat} A {\em horizontal intercategory} is a pseudocategory
$$
{\mathbb C} \threepppp/>`>`>/<400>^{p_1} |{m} _{p_2} {\mathbb B} \three/>`<-`>/^{\partial_0} | {\id}_{\partial_1} {\mathbb A}
$$
in $\Dlax $ where $ \partial_0 $ and $ \partial_1 $ are strict functors.

\end{definition}

In this definition, it is understood that
$$
\bfig
\square[{\mathbb C} ` {\mathbb B} `{\mathbb B} `{\mathbb A};p_1 `p_2 ` \partial_1 `\partial_0]
\efig
$$
is a pullback so that $ p_1 $ and $ p_2 $ are also strict and thus the pullbacks needed to express coherence also exist.

In this presentation the horizontal and vertical arrows of $ {\mathbb A} $ are the transversal and vertical arrows of the $ {\ic A} $ of Section \ref{intercategories}, so the cells of $ {\mathbb A} $ are the vertical cells of $ {\ic A} $. The objects, horizontal and vertical arrows, and cells of $ {\mathbb B} $ are the horizontal arrows, horizontal and basic cells, and cubes of $ {\ic A} $. (See table at the end of Section \ref{3x3}.)

Given two pseudocategories $ C \threepppp/>`>`>/<400>^{p_1} |{m}_{p_2} B \three/>`<-`>/^{\partial_0} | {\id} _{\partial_1} A $ and $ C' \threepppp/>`>`>/<400>^{p'_1} |{m'}_{p'_2} B' \three/>`<-`>/^{\partial'_0} |{\id'} _{\partial'_1} A' $ in a $2$-category, a {\em lax functor} from the first to the second consists of three arrows $ f $, $ g $, $ h$ such that
$$
\bfig
\square/`>`>`/[C `B`C'`B';`h`g`]
\place(240,500)[\two^{p_1}_{p_2}]
\place(240,0)[\two^{p'_1}_{p'_2}]
\square(500,0)/`>`>`/[B`A`B'`A';``f`]
\place(740,500)[\two^{\partial_0}_{\partial_1}]
\place(740,0)[\two^{\partial'_0}_{\partial'_1}]
\efig
$$
``commutes sequentially'' i.e.\ $ f\partial_i = \partial'_i g$, and $ gp_j = p'_j h$ for $ i = 0, 1 $ and $ j = 1, 2 $. (So $ h $ is in fact determined by $ f $ and $g $.) Apart from these three arrows there are also given comparison $2$-cells
$$
\bfig
\square[A ` B ` A' ` B';\id ` f ` g `\id']
\place(250,240)[\Rightarrow]
\place(250,320)[\scriptstyle \eta]
\square(800,0)[C` B ` C' ` B';m `h`g`m']
\place(1050,240)[\Rightarrow]
\place(1050,320)[\scriptstyle \mu]
\efig
$$
satisfying the usual equations, i.e.\ unit and associativity.

There are also colax morphisms of pseudocategory objects with the same conditions except that $ \eta $ and $ \mu $ go in the opposite direction.

Returning now to intercategories as pseudocategories in $\Dlax $, we get two kinds of morphisms, lax and colax
$$
\bfig
\square/`>`>`/[{\mathbb C} `{\mathbb B}`{\mathbb C'}`{\mathbb B'};`H`G`]
\place(240,500)[\threepppp/>`>`>/<300>^{}|{}_{}]
\place(240,0)[\threepppp/>`>`>/<300>^{}|{}_{}]
\square(500,0)/`>`>`/[{\mathbb B} `{\mathbb A} `{\mathbb B'}`{\mathbb A'};``F`]
\place(740,500)[\threepppp/>`<-`>/<300>^{}|{}_{}]
\place(740,0)[\threepppp/>`<-`>/<300>^{}|{}_{}]
\efig
$$
but the $ F $, $ G $, $ H $ are always lax. We analyze them further in the next section. 

There is a different way of looking at intercategories as pseudocategories, i.e. in the $2$-category $ \Dcolax $. We get an equivalent notion of intercategory but not the same notions of morphisms. We'll call them vertical intercategories. In the next section, we prove that horizontal and vertical intercategories are equivalent notions.

\begin{definition}\label{vintercat} A {\em vertical intercategory} is a pseudocategory
$$
{\mathbb X}_2 \threepppp/>`>`>/<400>^{P_1} |{M} _{P_2} {\mathbb X}_1 \three/>`<-`>/^{D_0} | {\Id}_{D_1} {\mathbb X}_0
$$
in $\Dcolax $ where $ D_0 $ and $ D_1 $ are strict functors.

\end{definition}

Now, the objects, horizontal and vertical arrows, and cells of $ {\mathbb X}_0 $ are the objects, transversal and horizontal arrows, and horizontal cells of $ {\ic A}$, respectively. $ {\mathbb X}_1 $ consists of vertical arrows, vertical and basic cells, and cubes of $ {\ic A} $. (See table at the end of Section \ref{3x3}.)


\section{Double pseudocategories in a $2$-category}\label{3x3}

With a view to understanding the equivalence of the two preceding definitions of intercategory, we present a more symmetric version which doesn't refer to lax or colax. Although we only need it for $\CAT $, the definition we give below works in any $2$-category with $2$-pullbacks, with a simple change of word here and there. This may be useful, say for ${\bf V}$-$\CAT$ for appropriate monoidal categories $ {\bf V} $, but just the fact that the notion can be thus internalized says something about the definition.

If we make the double categories $ {\mathbb A} $, ${\mathbb B} $, ${\mathbb C} $ of Definition \ref{hintercat} explicit as pseudocategories in $ \CAT $ we get a $ 3 \times 3 $ diagram of categories and functors
$$
\bfig

\square/>`<-`<-`>/[{\bf C}_1 `{\bf B}_1`{\bf C}_2`{\bf B}_2;```]

\square|allb|/@{>}@<-3pt>`@{<-}@<-3pt>`@{<-}@<-3pt>`@{>}@<-3pt>/[{\bf C}_1 `{\bf B}_1`{\bf C}_2`{\bf B}_2;```]

\square|allb|/@{>}@<3pt>`@{<-}@<3pt>`@{<-}@<3pt>`@{>}@<3pt>/[{\bf C}_1 `{\bf B}_1`{\bf C}_2`{\bf B}_2;```]

\square(0,500)/>`>`>`>/[{\bf C}_0 `{\bf B}_0`{\bf C}_1`{\bf B}_1;```]

\square(0,500)|allb|/@{>}@<-3pt>`@{<-}@<-3pt>`@{<-}@<-3pt>`@{>}@<-3pt>/[{\bf C}_0 `{\bf B}_0`{\bf C}_1`{\bf B}_1;```]

\square(0,500)|allb|/@{>}@<3pt>`@{<-}@<3pt>`@{<-}@<3pt>`@{>}@<3pt>/[{\bf C}_0 `{\bf B}_0`{\bf C}_1`{\bf B}_1;```]

\square(500,0)/<-`<-`<-`<-/[{\bf B}_1 `{\bf A}_1`{\bf B}_2`{\bf A}_2;```]

\square(500,0)|allb|/@{>}@<-3pt>`@{<-}@<-3pt>`@{<-}@<-3pt>`@{>}@<-3pt>/[{\bf B}_1 `{\bf A}_1`{\bf B}_2`{\bf A}_2;```]

\square(500,0)|allb|/@{>}@<3pt>`@{<-}@<3pt>`@{<-}@<3pt>`@{>}@<3pt>/[{\bf B}_1 `{\bf A}_1`{\bf B}_2`{\bf A}_2;```]

\square(500,500)/<-`>`>`<-/[{\bf B}_0 `{\bf A}_0`{\bf B}_1`{\bf A}_1;```]

\square(500,500)|allb|/@{>}@<-3pt>`@{<-}@<-3pt>`@{<-}@<-3pt>`@{>}@<-3pt>/[{\bf B}_0 `{\bf A}_0`{\bf B}_1`{\bf A}_1;```]

\square(500,500)|allb|/@{>}@<3pt>`@{<-}@<3pt>`@{<-}@<3pt>`@{>}@<3pt>/[{\bf B}_0 `{\bf A}_0`{\bf B}_1`{\bf A}_1;```]

\place(-1000,500)[\ \ ]
\place(2000,500)[(*)]

\efig
$$

A horizontal intercategory is a bit less than this and a bit more too. ``A bit less'' because some of those categories and functors are determined by the others. For example, we only need to specify the categories $ {\bf A}_0, {\bf B}_0, {\bf A}_1, {\bf B}_1 $, but we need the others to serve as domains for the composition functors.  ``A bit more'' because we need an extra row and column in order to specify the structural isomorphisms (associativity, etc.) and one more row and column to express the coherence conditions.

The names of the 36 functors follow those of Definition \ref{icdef}. The vertical ones are the uppercase versions of the corresponding horizontal ones. The first condition on $ (*) $ is that each of the rows $ {\mathbb X}_0, {\mathbb X}_1, {\mathbb X}_2$ and each of the columns $ {\mathbb A}, {\mathbb B}, {\mathbb C} $ represent weak double categories. In particular the following diagrams are pullbacks in ${\cal CAT} $:
$$
\bfig
\square[{\bf C}_i  ` {\bf B}_i ` {\bf B}_i ` {\bf A}_i; p^i_2 `p^i_1 `\partial^i_0 `\partial^i_1]
\place(1200,250)[i = 0, 1, 2]

\place(250,250)[\scriptstyle (1)]

\place(-1000,0)[\ ]
\efig
$$
and
$$
\bfig
\square[{\bf A}_2  `{\bf A}_1  `{\bf A}_1  `{\bf A}_0;P^A_2  `P^A_1  `D^A_0  `D^A_1]
\place(250,250)[\scriptstyle (2)]

\square(1000,0)[{\bf B}_2  ` {\bf B}_1  `{\bf B}_1  `{\bf B}_0;P^B_2 `P^B_1  `D^B_0 `D^B_1]
\place(1250,250)[\scriptstyle (3)]

\square(2000,0)[{\bf C}_2  `{\bf C}_1  `{\bf C}_1  `{\bf C}_0;P^C_2  `P^C_1  `D^C_0  `D^C_1]
\place(2250,250)[\scriptstyle (4)]
\efig
$$
Each of these weak double categories comes with its structural natural isomorphisms, usually clear from the context, denoted $ \kappa, \lambda, \rho $ in the first case and $\kappa', \lambda', \rho'$ in the second.

The domains and codomains as well as the pullback projections preserve all the structure. More precisely, the following diagrams commute.

\noindent Those involving $ \partial_0, \partial_1 $
$$
\bfig
\square/>`<-`<-`>/[{\bf B}_0 ` {\bf A}_0 `{\bf B}_1 `{\bf A}_1; \partial^0_i  `D^B_j `D^A_j `\partial^1_i]
\place(250,250)[\scriptstyle (5)]
\square(1000,0)/>`<-`<-`>/[{\bf B}_1 `{\bf A}_1  `{\bf B}_2 `{\bf A}_2; \partial^1_i  `P^B_k `P^A_k  `\partial^2_i]
\place(1250,250)[\scriptstyle (6)]
\square(2000,0)[{\bf B}_0  `{\bf A}_0  `{\bf B}_1 `{\bf A}_1;\partial^0_i  `\Id^B  `\Id^A `\partial^1_i]
\place(2250,250)[\scriptstyle (7)]
\square(3000,0)/>`<-`<-`>/[{\bf B}_1 `{\bf A}_1  `{\bf B}_2  `{\bf A}_2;\partial^1_i `M^B `M^A `\partial^2_i]
\place(3250,250)[\scriptstyle (8)]

\efig
$$

\noindent Those involving $ p_1, p_2 $
$$
\bfig
\square/>`<-`<-`>/[{\bf C}_0 ` {\bf B}_0 `{\bf C}_1 `{\bf B}_1; p^0_k  `D^C_i ` D^B_i `p^1_k]
\place(250,250)[\scriptstyle (9)]
\square(1000,0)/>`<-`<-`>/[{\bf C}_1 `{\bf B}_1  `{\bf C}_2 `{\bf B}_2; p^1_k `P^C_i `P^B_l `p^2_k]
\place(1250,250)[\scriptstyle (10)]
\square(2000,0)[{\bf C}_0  `{\bf B}_0  `{\bf C}_1 `{\bf B}_1;p^0_k `\Id^C `\Id^B `p^1_k]
\place(2250,250)[\scriptstyle (11)]
\square(3000,0)/>`<-`<-`>/[{\bf C}_1 `{\bf B}_1  `{\bf C}_2  `{\bf B}_2;p^1_k `M^C `M^B `p^2_k]
\place(3250,250)[\scriptstyle (12)]

\efig
$$

\noindent Those involving $ \id, m$
$$
\bfig
\square/<-`<-`<-`<-/[{\bf B}_0 `{\bf A}_0 `{\bf B}_1 `{\bf A}_1;\id^0 `D^B_i `D^A_i `\id^1]
\place(250,250)[\scriptstyle (13)]
\square(1000,0)/<-`<-`<-`<-/[{\bf B}_1 `{\bf A}_1 `{\bf B}_2 `{\bf A}_2;\id^1 `P^B_k `P^A_k `\id^2]
\place(1250,250)[\scriptstyle (14)]
\square(2000,0)/>`<-`<-`>/[{\bf C}_0  `{\bf B}_0  `{\bf C}_1 `{\bf B}_1;m^0 `D^C_i `D^B_i`m^1]
\place(2250,250)[\scriptstyle (15)]
\square(3000,0)/>`<-`<-`>/[{\bf C}_1 `{\bf B}_1  `{\bf C}_2  `{\bf B}_2;m^1 `P^C_k `P^B_k `m^2]
\place(3250,250)[\scriptstyle (16)]

\efig
$$

\noindent The crucial ingredients are the natural transformations
$$
\bfig
\square/<-`>`>`<-/[{\bf B}_0 `{\bf A}_0 `{\bf B}_1 `{\bf A}_1;\id^0 `\Id^B `\Id^A `\id^1]
\place(250,-200)[(17)]
\place(230,250)[\twoar(1,-1)]
\place(290,250)[\scriptstyle \tau]

\square(1000,0)/<-`<-`<-`<-/[{\bf B}_1 `{\bf A}_1 `{\bf B}_2 `{\bf A}_2;\id^1 `M^B `M^A `\id^2]
\place(1250,-200)[(18)]
\place(1270,210)[\twoar(1,1)]
\place(1210,270)[\scriptstyle \mu]

\square(2000,0)[{\bf C}_0  `{\bf B}_0  `{\bf C}_1 `{\bf B}_1;m^0 `\Id^C `\Id^B`m^1]
\place(2250,-200)[(19)]
\place(2290,230)[\twoar(-1,-1)]
\place(2270,240)[\scriptstyle \delta]

\square(3000,0)/>`<-`<-`>/[{\bf C}_1 `{\bf B}_1  `{\bf C}_2  `{\bf B}_2;m^1 `M^C `M^B_k `m^2]
\place(3250,-200)[(20)]
\place(3200,180)[\twoar(-1,1)]
\place(3280,270)[\scriptstyle \chi]

\efig
$$

Apart from the coherence conditions on $ \kappa, \lambda, \rho, \kappa', \lambda', \rho' $, giving pseudocategories, the following must also hold.

\noindent Those involving $\id $
$$
\bfig\scalefactor{.8}
\Vtriangle/`>`<-/<500,300>[{\bf B}_3 `{\bf B}_1`{\bf B}_2;`M_{23}`M]
\Vtriangle(0,700)/`>`<-/<500,300>[{\bf A}_3`{\bf A}_1`{\bf A}_2;`M_{23}`M]
\Atriangle(0,1000)/<-`>`/<500,300>[{\bf A}_2`{\bf A}_3`{\bf A}_1;M_{12}`M`]
\morphism(0,1000)|l|<0,-700>[{\bf A}_3`{\bf B}_3;\id]
\morphism(500,700)|l|<0,-700>[{\bf A}_2`{\bf B}_2;\id]
\morphism(1000,1000)|r|<0,-700>[{\bf A}_1`{\bf B}_1;\id]

\place(1500,650)[=]

\Vtriangle(2000,0)/`>`<-/<500,300>[{\bf B}_3 `{\bf B}_1`{\bf B}_2;`M_{23}`M]
\Atriangle(2000,300)/<-`>`/<500,300>[{\bf B}_2`{\bf B}_3`{\bf B}_1;M_{12}`M`]
\Atriangle(2000,1000)/<-`>`/<500,300>[{\bf A}_2`{\bf A}_3`{\bf A}_1;M_{12}`M`]
\morphism(2000,1000)|l|<0,-700>[{\bf A}_3`{\bf B}_3;\id]
\morphism(2500,1300)|l|<0,-700>[{\bf A}_2`{\bf B}_2;\id]
\morphism(3000,1000)|r|<0,-700>[{\bf A}_1`{\bf B}_1;\id]

\morphism(250,400)|l|/=>/<0,200>[`;\mu_{23}]
\morphism(750,400)|l|/=>/<0,200>[`;\mu]
\morphism(500,950)|l|/=>/<0,200>[`;\kappa']

\morphism(2250,700)|l|/=>/<0,200>[`;\mu_{12}]
\morphism(2750,700)|l|/=>/<0,200>[`;\mu]
\morphism(2500,200)|l|/=>/<0,200>[`;\kappa']

\place(-700,650)[\ \ ]
\place(3700,650)[(21)]

\efig
$$

$$
\bfig\scalefactor{.8}
\Vtriangle/`>`<-/<500,300>[{\bf B}_1 `{\bf B}_1`{\bf B}_2;`\Id_1`M]
\Vtriangle(0,700)/=`>`<-/<500,300>[{\bf A}_1`{\bf A}_1`{\bf A}_2;`\Id_1`M]

\morphism(0,1000)|l|<0,-700>[{\bf A}_1`{\bf B}_1;\id]
\morphism(500,700)|l|<0,-700>[{\bf A}_2`{\bf B}_2;\id]
\morphism(1000,1000)|r|<0,-700>[{\bf A}_1`{\bf B}_1;\id]

\morphism(250,400)|l|/=>/<0,200>[`;\tau_1]
\morphism(750,400)|l|/=>/<0,200>[`;\mu]
\morphism(500,800)|l|/=>/<0,170>[`;\lambda']

\place(1500,650)[=]

\Vtriangle(2000,0)/=`>`<-/<500,300>[{\bf B}_1 `{\bf B}_1`{\bf B}_2;`\Id_1`M]
\square(2000,300)/=`>`>`=/<1000,700>[{\bf A}_1`{\bf A}_1`{\bf B}_1`{\bf B}_1;`\id`\id`]

\morphism(2500,100)|l|/=>/<0,170>[`;\lambda']
\morphism(2500,550)|l|/=>/<0,200>[`;1_\id]

\place(-700,650)[\ \ ]
\place(3700,650)[(22)]
\efig
$$

$$
\bfig\scalefactor{.8}
\Vtriangle/`>`<-/<500,300>[{\bf B}_1 `{\bf B}_1`{\bf B}_2;`\Id_2`M]
\Vtriangle(0,700)/=`>`<-/<500,300>[{\bf A}_1`{\bf A}_1`{\bf A}_2;`\Id_2`M]

\morphism(0,1000)|l|<0,-700>[{\bf A}_1`{\bf B}_1;\id]
\morphism(500,700)|l|<0,-700>[{\bf A}_2`{\bf B}_2;\id]
\morphism(1000,1000)|r|<0,-700>[{\bf A}_1`{\bf B}_1;\id]

\morphism(250,400)|l|/=>/<0,200>[`;\tau_2]
\morphism(750,400)|l|/=>/<0,200>[`;\mu]
\morphism(500,800)|l|/=>/<0,170>[`;\rho']

\place(1500,650)[=]

\Vtriangle(2000,0)/=`>`<-/<500,300>[{\bf B}_1 `{\bf B}_1`{\bf B}_2;`\Id_2`M]
\square(2000,300)/=`>`>`=/<1000,700>[{\bf A}_1`{\bf A}_1`{\bf B}_1`{\bf B}_1;`\id`\id`]

\morphism(2500,100)|l|/=>/<0,170>[`;\rho']
\morphism(2500,550)|l|/=>/<0,200>[`;1_\id]

\place(-700,650)[\ \ ]
\place(3700,650)[(23)]
\efig
$$

\noindent Those involving $ m $
$$
\bfig\scalefactor{.8}

\Vtriangle/`>`<-/<500,300>[{\bf B}_3 `{\bf B}_1`{\bf B}_2;`M_{23}`M]

\Vtriangle(0,700)/`>`<-/<500,300>[{\bf C}_3`{\bf C}_1`{\bf C}_2;`M_{23}`M]

\Atriangle(0,1000)/<-`>`/<500,300>[{\bf C}_2`{\bf C}_3`{\bf C}_1;M_{12}`M`]

\morphism(0,1000)|l|<0,-700>[{\bf C}_3`{\bf B}_3;m]

\morphism(500,700)|l|<0,-700>[{\bf C}_2`{\bf B}_2;m]

\morphism(1000,1000)|r|<0,-700>[{\bf C}_1`{\bf B}_1;m]

\place(1500,650)[=]

\Vtriangle(2000,0)/`>`<-/<500,300>[{\bf B}_3 `{\bf B}_1`{\bf B}_2;`M_{23}`M]

\Atriangle(2000,300)/<-`>`/<500,300>[{\bf B}_2`{\bf B}_3`{\bf B}_1;M_{12}`M`]

\Atriangle(2000,1000)/<-`>`/<500,300>[{\bf C}_2`{\bf C}_3`{\bf C}_1;M_{12}`M`]

\morphism(2000,1000)|l|<0,-700>[{\bf C}_3`{\bf B}_3;m]

\morphism(2500,1300)|l|<0,-700>[{\bf C}_2`{\bf B}_2;m]

\morphism(3000,1000)|r|<0,-700>[{\bf C}_1`{\bf B}_1;m]

\morphism(250,400)|l|/=>/<0,200>[`;\chi_{23}]

\morphism(750,400)|l|/=>/<0,200>[`;\chi]

\morphism(500,950)|l|/=>/<0,200>[`;\kappa']

\morphism(2250,700)|l|/=>/<0,200>[`;\chi_{12}]

\morphism(2750,700)|l|/=>/<0,200>[`;\chi]

\morphism(2500,200)|l|/=>/<0,200>[`;\kappa']

\place(-700,650)[\ \ ]
\place(3700,650)[(24)]

\efig
$$

$$
\bfig\scalefactor{.8}

\Vtriangle/`>`<-/<500,300>[{\bf B}_1 `{\bf B}_1`{\bf B}_2;`\Id_1`M]

\Vtriangle(0,700)/=`>`<-/<500,300>[{\bf C}_1`{\bf C}_1`{\bf C}_2;`\Id_1`M]

\morphism(0,1000)|l|<0,-700>[{\bf C}_1`{\bf B}_1;m]

\morphism(500,700)|l|<0,-700>[{\bf C}_2`{\bf B}_2;m]

\morphism(1000,1000)|r|<0,-700>[{\bf C}_1`{\bf B}_1;m]

\morphism(250,400)|l|/=>/<0,200>[`;\delta_1]

\morphism(750,400)|l|/=>/<0,200>[`;\chi]

\morphism(500,800)|l|/=>/<0,170>[`;\lambda']

\place(1500,650)[=]

\Vtriangle(2000,0)/=`>`<-/<500,300>[{\bf B}_1 `{\bf B}_1`{\bf B}_2;``]

\square(2000,300)/=`>`>`=/<1000,700>[{\bf C}_1`{\bf C}_1`{\bf B}_1`{\bf B}_1;`m`m`]

\morphism(2500,100)|l|/=>/<0,170>[`;\lambda']

\morphism(2500,550)|l|/=>/<0,200>[`;1_m]

\place(-700,650)[\ \ ]
\place(3700,650)[(25)]
\efig
$$

$$
\bfig\scalefactor{.8}

\Vtriangle/`>`<-/<500,300>[{\bf B}_1 `{\bf B}_1`{\bf B}_2;`\Id_2`M]

\Vtriangle(0,700)/=`>`<-/<500,300>[{\bf C}_1`{\bf C}_1`{\bf C}_2;`\Id_2`M]

\morphism(0,1000)|l|<0,-700>[{\bf C}_1`{\bf B}_1;m]

\morphism(500,700)|l|<0,-700>[{\bf C}_2`{\bf B}_2;m]

\morphism(1000,1000)|r|<0,-700>[{\bf C}_1`{\bf B}_1;m]

\morphism(250,400)|l|/=>/<0,200>[`;\delta_2]

\morphism(750,400)|l|/=>/<0,200>[`;\chi]

\morphism(500,800)|l|/=>/<0,170>[`;\rho']

\place(1500,650)[=]

\Vtriangle(2000,0)/=`>`<-/<500,300>[{\bf B}_1 `{\bf B}_1`{\bf B}_2;``]

\square(2000,300)/=`>`>`=/<1000,700>[{\bf C}_1`{\bf C}_1`{\bf B}_1`{\bf B}_1;`m`m`]

\morphism(2500,100)|l|/=>/<0,170>[`;\rho']

\morphism(2500,550)|l|/=>/<0,200>[`;1_m]

\place(-700,650)[\ \ ]
\place(3700,650)[(26)]
\efig
$$

\noindent Those specific to $ \kappa $
$$
\bfig\scalefactor{.8}
\Vtriangle/`>`<-/<500,300>[{\bf D}_2 `{\bf B}_2`{\bf C}_2;`m_{23}`m]

\Vtriangle(0,700)/`>`<-/<500,300>[{\bf D}_1`{\bf B}_1`{\bf C}_1;`m_{23}`m]

\Atriangle(0,1000)/<-`>`/<500,300>[{\bf C}_1`{\bf D}_1`{\bf B}_1;m_{12}`m`]

\morphism(0,1000)|l|/<-/<0,-700>[{\bf D}_1`{\bf D}_2;M]

\morphism(500,700)|l|/<-/<0,-700>[{\bf C}_1`{\bf C}_2;M]

\morphism(1000,1000)|r|/<-/<0,-700>[{\bf B}_1`{\bf B}_2;M]

\place(1500,650)[=]

\Vtriangle(2000,0)/`>`<-/<500,300>[{\bf D}_2 `{\bf B}_2`{\bf C}_2;`m_{23}`m]

\Atriangle(2000,300)/<-`>`/<500,300>[{\bf C}_2`{\bf D}_2`{\bf B}_2;m_{12}`m`]

\Atriangle(2000,1000)/<-`>`/<500,300>[{\bf C}_1`{\bf D}_1`{\bf B}_1;m_{12}`m`]

\morphism(2000,1000)|l|/<-/<0,-700>[{\bf D}_1`{\bf D}_2;M]

\morphism(2500,1300)|l|/<-/<0,-700>[{\bf C}_1`{\bf C}_2;M]

\morphism(3000,1000)|r|/<-/<0,-700>[{\bf B}_1`{\bf B}_2;M]

\morphism(250,400)|l|/=>/<0,200>[`;\chi_{23}]
\morphism(750,400)|l|/=>/<0,200>[`;\chi]
\morphism(500,950)|l|/=>/<0,200>[`;\kappa]

\morphism(2250,700)|l|/=>/<0,200>[`;\chi_{12}]
\morphism(2750,700)|l|/=>/<0,200>[`;\chi]
\morphism(2500,200)|l|/=>/<0,200>[`;\kappa]

\place(-700,650)[\ \ ]
\place(3700,650)[(27)]

\efig
$$

$$
\bfig\scalefactor{.8}
\Vtriangle/`>`<-/<500,300>[{\bf D}_0 `{\bf B}_0`{\bf C}_0;`m_{23}`m]

\Vtriangle(0,700)/`>`<-/<500,300>[{\bf D}_1`{\bf B}_1`{\bf C}_1;`m_{23}`m]

\Atriangle(0,1000)/<-`>`/<500,300>[{\bf C}_1`{\bf D}_1`{\bf B}_1;m_{12}`m`]

\morphism(0,1000)|l|/<-/<0,-700>[{\bf D}_1`{\bf D}_0;\Id]

\morphism(500,700)|l|/<-/<0,-700>[{\bf C}_1`{\bf C}_0;\Id]

\morphism(1000,1000)|r|/<-/<0,-700>[{\bf B}_1`{\bf B}_0;\Id]

\place(1500,650)[=]

\Vtriangle(2000,0)/`>`<-/<500,300>[{\bf D}_0 `{\bf B}_0`{\bf C}_0;`m_{23}`m]

\Atriangle(2000,300)/<-`>`/<500,300>[{\bf C}_0`{\bf D}_0`{\bf B}_0;m_{12}`m`]

\Atriangle(2000,1000)/<-`>`/<500,300>[{\bf C}_1`{\bf D}_1`{\bf B}_1;m_{12}`m`]

\morphism(2000,1000)|l|/<-/<0,-700>[{\bf D}_1`{\bf D}_0;\Id]

\morphism(2500,1300)|l|/<-/<0,-700>[{\bf C}_1`{\bf C}_0;\Id]

\morphism(3000,1000)|r|/<-/<0,-700>[{\bf B}_1`{\bf B}_0;\Id]

\morphism(250,400)|l|/=>/<0,200>[`;\delta_{23}]
\morphism(750,400)|l|/=>/<0,200>[`;\delta]
\morphism(500,950)|l|/=>/<0,200>[`;\kappa]

\morphism(2250,700)|l|/=>/<0,200>[`;\delta]
\morphism(2750,700)|l|/=>/<0,200>[`;\delta]
\morphism(2500,200)|l|/=>/<0,200>[`;\kappa]

\place(-700,650)[\ \ ]
\place(3700,650)[(28)]

\efig
$$

\noindent Those specific to $ \lambda $
$$
\bfig\scalefactor{.8}

\Vtriangle/`>`<-/<500,300>[{\bf B}_2 `{\bf B}_2`{\bf C}_2;`\id_1`m]

\Vtriangle(0,700)/=`>`<-/<500,300>[{\bf B}_1`{\bf B}_1`{\bf C}_1;`\id_1`m]

\morphism(0,1000)|l|/<-/<0,-700>[{\bf B}_1`{\bf B}_2;M]

\morphism(500,700)|l|/<-/<0,-700>[{\bf C}_1`{\bf C}_2;M]

\morphism(1000,1000)|r|/<-/<0,-700>[{\bf B}_1`{\bf B}_2;M]

\morphism(250,400)|l|/=>/<0,200>[`;\mu_1]

\morphism(750,400)|l|/=>/<0,200>[`;\chi]

\morphism(500,800)|l|/=>/<0,170>[`;\lambda]

\place(1500,650)[=]

\Vtriangle(2000,0)/=`>`<-/<500,300>[{\bf B}_2 `{\bf B}_2`{\bf C}_2;`\id_1`m]

\square(2000,300)/=`<-`<-`=/<1000,700>[{\bf B}_1`{\bf B}_1`{\bf B}_2`{\bf B}_2;`M`M`]

\morphism(2500,100)|l|/=>/<0,170>[`;\lambda]

\morphism(2500,550)|l|/=>/<0,200>[`;1_\mu]

\place(-700,650)[\ \ ]
\place(3700,650)[(29)]
\efig
$$

$$
\bfig\scalefactor{.8}

\Vtriangle/`>`<-/<500,300>[{\bf B}_0 `{\bf B}_0`{\bf C}_0;`\id_1`m]

\Vtriangle(0,700)/=`>`<-/<500,300>[{\bf B}_1`{\bf B}_1`{\bf C}_1;`\id_1`m]

\morphism(0,1000)|l|/<-/<0,-700>[{\bf B}_1`{\bf B}_0;\Id]

\morphism(500,700)|l|/<-/<0,-700>[{\bf C}_1`{\bf C}_0;\Id]

\morphism(1000,1000)|r|/<-/<0,-700>[{\bf B}_1`{\bf B}_0;\Id]

\morphism(250,400)|l|/=>/<0,200>[`;\tau_1]

\morphism(750,400)|l|/=>/<0,200>[`;\delta]

\morphism(500,800)|l|/=>/<0,170>[`;\lambda]

\place(1500,650)[=]

\Vtriangle(2000,0)/=`>`<-/<500,300>[{\bf B}_0 `{\bf B}_0`{\bf C}_0;`\id_1`m]

\square(2000,300)/=`<-`<-`=/<1000,700>[{\bf B}_1`{\bf B}_1`{\bf B}_0`{\bf B}_0;`\Id`\Id`]

\morphism(2500,100)|l|/=>/<0,170>[`;\lambda]

\morphism(2500,550)|l|/=>/<0,200>[`;1_\Id]

\place(-700,650)[\ \ ]
\place(3700,650)[(30)]
\efig
$$

\noindent Those specific to $ \rho $
$$
\bfig\scalefactor{.8}

\Vtriangle/`>`<-/<500,300>[{\bf B}_2 `{\bf B}_2`{\bf C}_2;`\id_2`m]

\Vtriangle(0,700)/=`>`<-/<500,300>[{\bf B}_1`{\bf B}_1`{\bf C}_1;`\id_2`m]

\morphism(0,1000)|l|/<-/<0,-700>[{\bf B}_1`{\bf B}_2;M]

\morphism(500,700)|l|/<-/<0,-700>[{\bf C}_1`{\bf C}_2;M]

\morphism(1000,1000)|r|/<-/<0,-700>[{\bf B}_1`{\bf B}_2;M]

\morphism(250,400)|l|/=>/<0,200>[`;\mu_2]

\morphism(750,400)|l|/=>/<0,200>[`;\chi]

\morphism(500,800)|l|/=>/<0,170>[`;\rho]

\place(1500,650)[=]

\Vtriangle(2000,0)/=`>`<-/<500,300>[{\bf B}_2 `{\bf B}_2`{\bf C}_2;`\id_2`m]

\square(2000,300)/=`<-`<-`=/<1000,700>[{\bf B}_1`{\bf B}_1`{\bf B}_2`{\bf B}_2;`M`M`]

\morphism(2500,100)|l|/=>/<0,170>[`;\rho]

\morphism(2500,550)|l|/=>/<0,200>[`;1_\mu]

\place(-700,650)[\ \ ]
\place(3700,650)[(31)]
\efig
$$

$$
\bfig\scalefactor{.8}

\Vtriangle/`>`<-/<500,300>[{\bf B}_0 `{\bf B}_0`{\bf C}_0;`\id_2`m]

\Vtriangle(0,700)/=`>`<-/<500,300>[{\bf B}_1`{\bf B}_1`{\bf C}_1;`\id_2`m]

\morphism(0,1000)|l|/<-/<0,-700>[{\bf B}_1`{\bf B}_0;\Id]

\morphism(500,700)|l|/<-/<0,-700>[{\bf C}_1`{\bf C}_0;\Id]

\morphism(1000,1000)|r|/<-/<0,-700>[{\bf B}_1`{\bf B}_0;\Id]

\morphism(250,400)|l|/=>/<0,200>[`;\tau_2]

\morphism(750,400)|l|/=>/<0,200>[`;\delta]

\morphism(500,800)|l|/=>/<0,170>[`;\rho]

\place(1500,650)[=]

\Vtriangle(2000,0)/=`>`<-/<500,300>[{\bf B}_0 `{\bf B}_0`{\bf C}_0;`\id_2`m]

\square(2000,300)/=`<-`<-`=/<1000,700>[{\bf B}_1`{\bf B}_1`{\bf B}_0`{\bf B}_0;`\Id`\Id`]

\morphism(2500,100)|l|/=>/<0,170>[`;\rho]

\morphism(2500,550)|l|/=>/<0,200>[`;1_\Id]

\place(-700,650)[\ \ ]
\place(3700,650)[(32)]
\efig
$$
  
Let us call the above structure {\em double pseudocategory object}.

\begin{theorem}\label{equivalence} The following three structures are the same, i.e.~they are specified by the same data and satisfy the same conditions.

\noindent (a) Double pseudocategories in $\Cat $.

\noindent (b) Horizontal intercategories, i.e.~pseudocategories
$$
 {\mathbb C}\  \threepppp/>`>`>/<400>^{p_1} |{m} _{p_2} \ {\mathbb B} \ \three/>`<-`>/^{\partial_0} | {\id}_{\partial_1} \ {\mathbb A} \mbox{\quad in\ } \Dlax
$$

\noindent (c) Vertical intercategories, i.e.~pseudocategories
$$
 {\mathbb X}_2\  \threepppp/>`>`>/<400>^{P_1} |{M} _{P_2} \ {\mathbb X}_1 \ \three/>`<-`>/^{D_0} | {\Id}_{D_1} \ {\mathbb X}_0  \mbox{\quad in \ } \Dcolax
$$

\end{theorem}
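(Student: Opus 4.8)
The plan is to prove the theorem by unwinding the three definitions completely and matching the resulting data and axioms term by term; since the claim is that the notions \emph{are the same}, not merely equivalent, no comparison functor is required, only an explicit dictionary together with the verification that nothing is lost or added. Throughout, the one genuinely non-formal ingredient is the Proposition of Section~\ref{prelim} on pullbacks of strict functors --- and its evident analogue for $\Dcolax$, a strict functor being both lax and colax: it guarantees that all the iterated $2$-pullbacks occurring in (b) and (c) exist, are formed one category level at a time, and have strict projections, which is exactly what licenses checking every piece of structure and every axiom levelwise.

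Consider (a) $\Leftrightarrow$ (b). A pseudocategory $\mathbb C \rightrightarrows \mathbb B \to \mathbb A$ in $\Dlax$ with $\partial_0,\partial_1$ strict unwinds as follows. The two weak double categories $\mathbb A,\mathbb B$, written as pseudocategories in $\Cat$, give the columns $\mathbb A,\mathbb B$ of $(*)$, their column composition and unit functors $M^A,\Id^A,M^B,\Id^B$, their structural isomorphisms $\kappa',\lambda',\rho'$ with the pentagon and triangle axioms on these, and the presenting pullback squares (2),(3). The strict functors $\partial_0,\partial_1\colon\mathbb B\to\mathbb A$ give the levelwise functors $\partial_i$ on each of the three levels, the commuting squares (5)--(8), and strict preservation of $\kappa',\lambda',\rho'$. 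The lax functor $\id\colon\mathbb A\to\mathbb B$ gives the functors $\id^0,\id^1,\id^2$, the comparison cells (17) ($\tau$, the unit comparison) and (18) ($\mu$, the comparison for column composition), their naturality, and the three lax-functor coherence axioms, which are precisely (21),(22),(23). For $m\colon\mathbb C=\mathbb B\times_{\mathbb A}\mathbb B\to\mathbb B$, the Proposition makes $\mathbb C$ the levelwise pullback --- condition (1) for $i=0,1,2$ --- with its double-category structure the levelwise one, so that $\mathbb C$ is a weak double category (in particular (4)) and the projections and $m$ satisfy the commutativities (9)--(12) and (15),(16); $m$ being lax then supplies the functors $m^0,m^1,m^2$, the comparison cells (19) ($\delta$) and (20) ($\chi$), their naturality, and the lax axioms (24),(25),(26). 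Finally the structural isomorphisms $\kappa,\lambda,\rho$ of the pseudocategory are $2$-cells of $\Dlax$, i.e.\ horizontal transformations between lax functors $\mathbb B^{(n)}\to\mathbb B$; such a transformation consists of an object-indexed family of arrows and an arrow-indexed family of cells, subject to naturality and to compatibility, in the column direction, with composition $M$ and with unit $\Id$, and these two compatibilities are exactly (27)--(28) for $\kappa$, (29)--(30) for $\lambda$, and (31)--(32) for $\rho$. The pseudocategory's own pentagon and triangle axioms for $\kappa,\lambda,\rho$, computed in $\Dlax$ where composition of horizontal transformations is levelwise, reduce to the same axioms level by level, i.e.\ to the assertion that the rows $\mathbb X_0,\mathbb X_1,\mathbb X_2$ are weak double categories --- the unprimed coherence. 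Assembling these items reproduces exactly the double pseudocategory object of (a), and conversely every piece of that object occurs above.

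The argument for (a) $\Leftrightarrow$ (c) is parallel, with lax/colax, $\id/\Id$, $m/M$ and rows/columns interchanged. A pseudocategory $\mathbb X_2\rightrightarrows\mathbb X_1\to\mathbb X_0$ in $\Dcolax$ with $D_0,D_1$ strict unwinds as: the weak double categories $\mathbb X_0,\mathbb X_1$ --- the rows of $(*)$, with their unprimed structure, coherence and presenting pullback squares; the strict $D_0,D_1\colon\mathbb X_1\to\mathbb X_0$, giving the levelwise functors $D^A_i,D^B_i$ ($i=0,1$), the commutativities (5),(9),(13),(15) and preservation of the unprimed isomorphisms; the colax functor $\Id\colon\mathbb X_0\to\mathbb X_1$, whose unit and composition comparisons are now read off as (17) ($\tau$) and (19) ($\delta$) and which carries its three colax coherence axioms; the colax functor $M\colon\mathbb X_2\to\mathbb X_1$, with comparison cells (18) ($\mu$) and (20) ($\chi$) and its three colax axioms; and the structural isomorphisms $\kappa',\lambda',\rho'$, now horizontal transformations between colax functors, whose unwinding yields the remaining naturality conditions and whose pentagon and triangle axioms say that the columns $\mathbb A,\mathbb B,\mathbb C$ are weak double categories. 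This again reconstructs exactly the double pseudocategory object of (a). The key bookkeeping fact, visible in both passages, is that each of the four comparison cells plays a double role: $\tau$ is simultaneously the unit comparison of the lax $\id$ (in (b)) and of the colax $\Id$ (in (c)); $\chi$ is the composition comparison of $m$ and of $M$; $\mu$ is the composition comparison of $\id$ and the unit comparison of $M$; and $\delta$ is the unit comparison of $m$ and the composition comparison of $\Id$ --- the directions being precisely those recorded by the mnemonic diagrams (17)--(20).

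The main obstacle is purely organisational: the entire content is the comparison of a large but finite list of axioms, made possible by the levelwise pullback Proposition, and the only place an error could enter is in checking that the directions of $\tau,\mu,\delta,\chi$ are consistent between their two readings and that the six lax-functor coherence diagrams (21)--(26) of the $\Dlax$ picture re-sort correctly into the colax-functor axioms --- together with the three unitor and associator coherences on each side --- of the $\Dcolax$ picture. Once the sign conventions encoded in (17)--(20) are respected, the three descriptions manifestly carry the same information.
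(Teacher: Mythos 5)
Your proposal is correct and follows essentially the same route as the paper: unwind each presentation levelwise (justified by the strict-pullback Proposition), and match the data and the numbered conditions (1)--(32) directly, with the same assignment of $\tau,\mu,\delta,\chi$ to their two roles as unit/composition comparisons of $\id,m$ versus $\Id,M$. The paper is merely more explicit in listing exactly which numbered conditions correspond to which ingredient in the $\Dcolax$ reading, but the dictionary you give agrees with it.
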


\begin{proof} (a) $\Leftrightarrow $ (b) The double categories $ {\mathbb A}, {\mathbb B}, {\mathbb C} $, when expressed as truncated simplicial categories, give the columns of $ (*) $, the structural isomorphisms $ \kappa', \lambda', \rho' $ and the pullbacks (2), (3), (4).

That the $ \partial_i $ are strict double functors is equivalent to conditions (5), (6), (7) and (8). Condition (6) is to ensure that $ \partial^2_i $ is the right morphism.

That
$$
\bfig

\square[{\mathbb C} `{\mathbb B} `{\mathbb B} `{\mathbb A};p_2 `p_1 `\partial_0 `\partial_1]

\efig
$$
is a pullback is equivalent to having the pullbacks (1) together with (9), (10), (11) and (12).

For $ \id : {\mathbb A} \to {\mathbb B} $ to be a lax functor it must preserve domain and codomain (13) and satisfy (14) to ensure that $ \id^2 $ is the right morphism. The identity comparison is given by $ \tau $ of (17) and the composition comparison by $ \mu $ of (18).

Similarly $ m : {\mathbb C} \to {\mathbb B} $ is a lax functor if and only if it satisfies (15) and (16) with the laxity morphisms given by $ \delta $ (19) and $ \chi $ (20).

With the structural isomorphisms $ \kappa, \lambda, \rho $ of $ {\mathbb C}\  \threepppp/>`>`>/<400>^{} |{} _{} \ {\mathbb B} \ \three/>`<-`>/^{} | {}_{} \ {\mathbb A} $ we see that the data and functor commutativities for a vertical intercategory and a double pseudocategory in $\Cat $ are the same ((1)-(20)). The equations on the natural transformations also correspond.

The coherence conditions for $ \id $ are: associativity (21), left unit law (22) and right unit law (23).

The coherence conditions for $ m $ are: associativity (24), left unit law (25) and right unit law (26).

For $ \kappa $ to be a transformation we need (27), (28); for $ \lambda $, we need (29), (30) and for $ \rho $, (31), (32).

This shows that a horizontal intercategory is the same as a double pseudocategory in $ \Cat $. To show that a vertical intercategory is also the same is similar although the conditions correspond in a different order. We give them here for completeness.

That $ D_i $ are strict functors correspond to (5), (9), (13), (15).

That
$$
\bfig

\square[{\mathbb X}_2 ` {\mathbb X}_1 ` {\mathbb X}_1 `{\mathbb X}_0;P_2 `P_1 `D_0 `D_1]

\efig
$$
is a pullback corresponds to the pullbacks (2), (3), (4) and commutativities (6), (10), (14), (16).

$ \Id $ is colax corresponds to (7), (11) with the structure maps $ \tau $ (17) and $\delta $ (19).

$ M $ is colax corresponds to (8), (12) and the structure maps $ \mu $ (18) and $ \chi $ (20).

The coherence conditions for $ \Id $ are (28), (30), (32) and those for $ M $ are (27), (29), (31).

For $ \kappa' $ to be a transformation we need (21), (24); for $ \lambda' $ we need (22), (25) and for $ \rho' $ we need (23), (26).

\end{proof}

The following table summarizes how the four different presentations of intercategory are related.

\ 

\noindent \begin{tabular}{|l|l|l|l|} \hline 
Object of $\ic A$  \rule{0cm}{15pt}    &  Object of $ {\bf A}_0 $       & Object of $ {\mathbb A} $       & Object of $ {\mathbb X}_0 $ \\[2pt]
Transversal arrow      & Morphism of $ {\bf A}_0 $   & Horizontal arrow of $ {\mathbb A} $  & Horizontal arrow of $ {\mathbb X}_0 $\\[2pt] 
Horizontal arrow   &  Object of $ {\bf B}_0 $  & Object of $ {\mathbb B} $ &  Vertical arrow of ${\mathbb X}_0 $ \\[2pt]
Vertical arrow  & Object of $ {\bf A}_1 $   &  Vertical arrow of $ {\mathbb A} $    & Object of $ {\mathbb X}_1 $ \\[2pt]
Horizontal cell   &   Morphism of $ {\bf B}_0 $    &   Horizontal arrow of $ {\mathbb B} $    &  Double cell of $ {\mathbb X}_0 $\\[2pt]
Vertical cell   &  Morphism of $ {\bf A}_1 $     &   Double cell of $ {\mathbb A} $   &   Horizontal arrow of $ {\mathbb X}_1 $\\[2pt]
Basic cell    &   Object of $ {\bf B}_1 $    &  Vertical arrow of $ {\mathbb B} $    &  Vertical arrow of $ {\mathbb X}_1 $\\[2pt]
Cube      & Morphism of $ {\bf B}_1 $  & Double cell of $ {\mathbb B} $   & Double cell of $ {\mathbb X}_1 $\\ [2pt]\hline
\end{tabular}

\ 

\ 

There are eight symmetries for intercategories, generated by three dualities, h, v, tr, just like for double categories. As the basic unit of structure in an intercategory is the cube, one might expect 48, as we would have for triple categories. But the transversal direction is special and must be invariant, which cuts the possibilities down to 16. However, since the interchanger $\chi $ is directed, only half of these are valid symmetries. In fact there are two dual notions of intercategory: the one we are using here in which
$$
\chi : \dfrac{\ \ | \ \ }{\ \ | \ \ }\  \to \ \dfrac{\ \ }{\ \ }\left| \dfrac{\ \ }{\ \ }\right.
$$
that we will call {\em right intercategory}, and the one in which $ \chi $ goes in the opposite direction called {\em left intercategory}.

The dualities are determined by what they do to the horizontal and vertical direction:

\noindent -- h reverses the horizontal direction while maintaining the vertical and transversal,

\noindent -- v reverses the vertical direction while maintaining the horizontal and transversal,

\noindent -- tr switches the horizontal and vertical directions and reverses the transversal. Because the interchangers are directed from a vertical composite of horizontal composites to the other way around, if we switch horizontal and vertical we must also reverse the direction of transversal arrows.

\noindent The other symmetries are combinations of these.


\section{Coherence in the $2$-dimensional notation} \label{depiction}

Apart from the interchangers $ \chi$, $ \delta $, $\mu $, $\tau$ there are the associativity and unit isomorphisms for $ {\ic A} $.
$$
\kappa  : \alpha | (\beta | \gamma) \to (\alpha | \beta) | \gamma,\quad   \lambda : \id_v | \alpha \to \alpha,\quad \rho : \alpha | \id_w \to \alpha
$$
and
$$
\kappa' : \dfrac{\alpha}{\frac{\beta}{\gamma}} \to \dfrac{\frac{\alpha}{\beta}}{\gamma},\quad \lambda' : \frac{\Id_h}{\alpha} \to \alpha ,\quad  \rho' : \frac{\alpha}{\Id_{\ov{h}}} \to \alpha .
$$
We usually omit the $ \kappa $ and $ \kappa' $ (leaving them understood), but keep the left and right unit isomorphisms.

The two-dimensional notation makes the overall structure easier to understand. Conditions (1)-(16) in the definition of double pseudocategory object are all bookkeeping for domains, codomains and composable pairs, and are implicit in the geometric representation. (17)-(20) give the interchangers, whose naturality is also implicit.

Conditions (21)-(32) are as follows.
$$
\bfig

\square<500,800>[\dfrac{\id_v}{\dfrac{\id_{\ov{v}}}{\id_{\ovv{v}}}}
`\dfrac{\id_{\frac{v}{\ov v}}}{\id_\ovv{v}}
`\dfrac{\id_v}{\id_{\frac{\ov{v}}{\ovv{v}}}}
`\id_{\frac{v}{\frac{\ov{v}}{\ovv{v}}}};
\frac{\mu}{\id_{\ovv{v}}}
`\frac{\id_v}{\mu}
`\mu`\mu]

\place(250,400)[\scriptstyle (21)]

\square(1000,150)/>`>`>`<-/[\dfrac{\Id_{\id_A}}{\id_v}
    `\dfrac{\id_{\Id_A}}{\id_v}
   `\id_v
    `\id_{\frac{\Id_A}{v}};
\frac{\tau}{\id_v}
`\lambda'
`\mu
`\id_{\lambda'}]

\place(1250,400)[\scriptstyle (22)]

\square(2000,150)/>`>`>`<-/[\dfrac{\id_v }{\Id_{\id_{\ov{A}}}}
      `\dfrac{\id_v }{\id_{\Id_{\ov{A}}}}
   `\id_v
   `\id_{\frac{v}{\Id_{\ov{A}}}};
\frac{\id_v}{\tau}
`\rho'
`\mu
`\id_{\rho'}]

\place(2250,400)[\scriptstyle (23)]

\efig
$$

$$
\bfig

\square<500,1000>[\dfrac{\alpha | \beta}{\dfrac{\gamma |\delta}{\epsilon|\phi}}
`\dfrac{\alpha | \beta}{\left.\dfrac{\gamma}{\epsilon}\right|\dfrac{\delta}{\phi}}
       `\dfrac{\left.\dfrac{\alpha}{\gamma}\right|\dfrac{\beta}{\delta}}{\epsilon |\phi}
        `\left.\dfrac{\alpha}{\dfrac{\gamma}{\epsilon}}\right|\dfrac{\beta}{\dfrac{\delta}{\phi}};
\dfrac{\alpha | \beta}{\chi}   `\dfrac{\chi}{\epsilon | \phi} `\chi `\chi]

\place(250,500)[\scriptstyle (24)]

\efig
$$

$$
\bfig

\square/>`>`<-`>/<500,800>[\dfrac{\Id_{h|k}}{\alpha | \beta}
   `\alpha | \beta
      `\dfrac{\Id_h | \Id_k}{\alpha | \beta}
   `\left.\dfrac{\Id_h}{\alpha}\right|\dfrac{\Id_k}{\beta};
\lambda' ` \dfrac{\delta}{\alpha | \beta} `\lambda' | \lambda'`\chi]

\place(250,400)[\scriptstyle (25)]

\square(1500,0)/>`>`<-`>/<500,800>[\dfrac{\alpha | \beta}{\Id_{\ov{h}|\ov{k}}} 
   `\alpha | \beta
   `\dfrac{\alpha | \beta}{\Id_{\ov{h}} | \Id_{\ov{k}}}
   `\left.\dfrac{\alpha}{\Id_{\ov{h}}}\right|\dfrac{\beta}{\Id_{\ov{k}}};
\rho' `\dfrac{\alpha | \beta}{\delta} `\rho' | \rho' `\chi]

\place(1750,400)[\scriptstyle (26)]

\efig
$$

$$
\bfig

\square<800,800>[\dfrac{\alpha | \beta | \gamma}{\delta | \epsilon | \phi}
       `\left.\dfrac{\alpha |\beta}{\delta | \epsilon}\right|\dfrac{\gamma}{\phi}
       `\left.\dfrac{\alpha}{\delta}\right|\dfrac{\beta | \gamma}{\epsilon | \phi}
      `\left.\dfrac{\alpha}{\delta}\right|\left.\dfrac{\beta}{\epsilon}\right|\dfrac{\gamma}{\phi};
 \chi `\chi `\chi\left|\dfrac{\gamma}{\phi}\right.
`\left.\dfrac{\alpha}{\delta}\right| \chi]

\place(400,400)[\scriptstyle (27)]

\efig
$$

$$
\bfig

\square<800,500>[\Id_{h|k|l}
   ` \Id_h | \Id_{k|l} 
   `\Id_{h|k} | \Id_l 
    `\Id_h | \Id_k | \Id_l; 
\delta 
` \delta 
`\Id_h | \delta 
` \delta | \Id_l]

\place(400,250)[\scriptstyle (28)]

\efig
$$

$$
\bfig

\square/>`>`>`<-/<500,650>[\dfrac{\id_v | \alpha}{\id_{\ov{v}} | \beta}
  `\left.\dfrac{\id_v}{\id_{\ov{v}}}\right| \dfrac{\alpha}{\beta}
 `\dfrac{\alpha}{\beta}
`\id_{\frac{v}{\ov{v}}}\left|\dfrac{\alpha}{\beta}\right.;
\chi ` \frac\lambda\lambda `\mu \left|\frac{\alpha}{\beta}\right.`\lambda]

\place(250,325)[\scriptstyle (29)]

\square(1500,0)/>`>`>`<-/<500,650>[\Id_{\id_A | h}
` \Id_{\id_A} | \Id_h 
`\Id_h 
`\id_{\Id_A} | \Id_h; 
\delta 
` \Id_\lambda 
` \tau | \Id 
` \lambda]

\place(1750,325)[\scriptstyle (30)]

\efig
$$

$$
\bfig

\square/>`>`>`<-/<500,650>[\dfrac{\alpha | \id_w}{\beta | \id_{\ov{w}}} 
   `\left.\dfrac{\alpha}{\beta}\right|\dfrac{\id_w}{\id_{\ov{w}}}
  `\dfrac{\alpha}{\beta}
`\left.\dfrac{\alpha}{\beta}\right| \id_{\frac{w}{\ov{w}}};
\chi 
` \frac\rho\rho 
` \left.\frac{\alpha}{\beta}\right| \mu
`\rho]

\place(250,325)[\scriptstyle (31)]

\square(1500,0)/>`>`>`<-/<500,650>[\Id_{h | \id_B} 
` \Id_h | \Id_{\id_B} 
`\Id_h 
`\Id_h | \id_{\Id_B};
\delta 
` \Id_\rho 
` \Id | \tau 
` \rho]

\place(1750,325)[\scriptstyle (32)]

\efig
$$


\section{Morphisms}\label{mor}

As mentioned in Section 3, by considering intercategories as pseudo-category objects in $ \Dlax $ we get two notions of morphism between them corresponding to internal lax and colax functors. As we see below, we also get cells relating them. We get a similar situation looking at pseudocategories in $\Dcolax $. We are now in a position to analyze this further.

First, we generalize our double category $ {\mathbb D}{\rm bl} $ of \cite{GP}.

\begin{theorem} \label{PC} For any $2$-category $ {\cal A} $ we get a strict double category $ {\mathbb P}s{\mathbb C}{\rm at} ({\cal A}) $, whose objects are pseudo-category objects in ${\cal A} $, with lax functors as horizontal arrows and colax functors as vertical ones and suitable double cells.

\end{theorem}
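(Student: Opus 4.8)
The plan is to internalize, essentially verbatim, the construction of the double category $ \Doub $ of \cite{GP}, which is the case $ {\cal A}=\CAT $ since, by Definition~\ref{icdef}, a weak double category is precisely a pseudocategory in $ \CAT $. I would specify the four levels of data as follows. An object of $ {\mathbb P}s{\mathbb C}{\rm at}({\cal A}) $ is a pseudocategory object $ {\mathbb X} $ in $ {\cal A} $ (Definition~\ref{icdef}), thought of as having an object-of-objects $ X_0 $, an object-of-arrows $ X_1 $, faces $ \partial^{\mathbb X}_i $, degeneracy $ \id^{\mathbb X} $, composition $ m^{\mathbb X}\colon X^{(2)}\to X_1 $ and structural $ 2 $-cells $ \kappa,\lambda,\rho $. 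A horizontal arrow $ F\colon{\mathbb X}\to{\mathbb Y} $ is a lax functor as in Section~\ref{pscat}: arrows $ F_0,F_1 $ (and the induced $ F_2\colon X^{(2)}\to Y^{(2)} $) commuting sequentially with faces, degeneracies and pullback projections, together with comparison $ 2 $-cells $ \eta_F\colon\id^{\mathbb Y}F_0\Rightarrow F_1\,\id^{\mathbb X} $ and $ \mu_F\colon m^{\mathbb Y}F_2\Rightarrow F_1\,m^{\mathbb X} $ satisfying the unit and associativity coherences; a vertical arrow is a colax functor, with $ \eta $ and $ \mu $ reversed. A double cell
$$
\bfig
\square/>`@{>}|{\bb}`@{>}|{\bb}`>/[{\mathbb X} ` {\mathbb Y} `{\mathbb X}' `{\mathbb Y}';F ` U ` V `F']
\place(250,250)[\scriptstyle \pi]
\efig
$$
with $ F,F' $ lax and $ U,V $ colax, I would take to be a pair of $ 2 $-cells $ \pi_0\colon V_0F_0\Rightarrow F'_0U_0 $ and $ \pi_1\colon V_1F_1\Rightarrow F'_1U_1 $ in $ {\cal A} $ such that $ \pi_1 $ restricts to $ \pi_0 $ along each face, and such that $ \pi_0,\pi_1 $ are compatible with the degeneracies and with $ m $ --- the last two being pasting equations in $ {\cal A} $ relating $ \pi_1 $ (resp.\ the $ 2 $-cell $ \pi_2 $ induced on iterated pullbacks) and $ \pi_0 $ to the four comparison cells $ \eta_F,\eta_{F'},\eta_U,\eta_V $ (resp.\ $ \mu_F,\mu_{F'},\mu_U,\mu_V $). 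When $ {\cal A}=\CAT $ this is exactly the double cell of $ \Doub $ recalled in Section~\ref{prelim}; indeed, as remarked there, writing out the $ m $-compatibility is what forces $ \pi_0 $ and $ \pi_1 $ to point in the directions indicated.

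For the compositions I would take the obvious ones: horizontal composition of lax functors is componentwise in $ {\cal A} $, with comparison cells obtained by pasting (e.g.\ $ \eta_{F'F}=(F'_1\cdot\eta_F)\cdot(\eta_{F'}\cdot F_0) $), and horizontal identities have identity components and identity comparisons; dually for vertical composition of colax functors; horizontal (resp.\ vertical) composition of double cells is horizontal (resp.\ vertical) pasting of the $ \pi_0 $'s and of the $ \pi_1 $'s, and the identity cells carry the evident identity $ 2 $-cells. Since composition in the $ 2 $-category $ {\cal A} $ is strictly associative and unitary and pasting of $ 2 $-cells obeys the usual laws, every composite is formed on the nose, so all the double-category axioms --- associativity and unit laws in both directions and the middle-four interchange law --- will follow immediately, yielding a \emph{strict} double category with no structural isomorphisms involved.

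The bulk of the work, and the step I expect to be the main obstacle, is the well-definedness check: that a pasted pair $ (\pi_0,\pi_1) $ again satisfies the face, degeneracy and $ m $ compatibilities, and that composition of lax (resp.\ colax) functors preserves their coherence axioms. These are routine but lengthy diagram chases using the interchange law of $ {\cal A} $, naturality of the comparison cells and the coherences already assumed --- precisely the computations carried out for $ \Doub $ in \cite{GP}. The one genuinely structural point, flagged for $ \Doub $ in the remark following its definition in Section~\ref{prelim}, is that the comparison cells of the horizontal (lax) functors and those of the vertical (colax) functors must point in opposite ways in order for the two sides of the $ m $-compatibility equation on a double cell to have matching boundaries; with lax/lax or colax/colax the relevant $ 2 $-cells in $ {\cal A} $ fail to be composable and the notion of double cell would be vacuous. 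Once this lax--colax choice is made as above, all verifications go through, and instantiating at $ {\cal A}=\CAT $ recovers $ \Doub $.
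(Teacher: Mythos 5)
Your proposal is correct and takes essentially the same approach as the paper: the paper likewise defines a double cell as $2$-cells $\pi_0,\pi_1$ in ${\cal A}$ (with $\pi_2$ determined by them) satisfying the $\id$- and $m$-compatibility pasting equations against the laxity/colaxity cells, notes these conditions are commutative cubes and hence stable under horizontal and vertical pasting, and derives strictness from strict composition in ${\cal A}$. The only difference is emphasis --- the paper treats the well-definedness chases you flag as the bulk of the work as clear and omits them.
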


\begin{proof} It is clear that we have a strictly associative and unitary composition of lax functors, and colax ones too.

We must define double cells, which is just a question of expressing the definition of \cite{GP} diagramatically. A double cell $ \pi $ has a boundary
$$
\bfig

\square/>`@{>}|{\bb}`@{>}|{\bb}`>/[{\bf A} `{\bf B} `{\bf C} `{\bf D};F `U`V `G]

\place(250,250)[\scriptstyle \pi]

\efig
$$
with ${\bf A}, {\bf B}, {\bf C}, {\bf D} $ pseudocategories in $ {\cal A} $, $ F, G $ lax functors and $ U, V $ colax functors. $ {\bf A} $ consists, in part, of objects and arrows
$$
A_2 \threepppp/>`>`>/<400>^{p_1} |{m} _{p_2} A_1 \three/>`<-`>/^{\partial_0} | {\id}_{\partial_1} A_0
$$
and similarly for $ {\bf B}, {\bf C}, {\bf D} $. Similarly $ F $ consists of arrows
$$
\bfig

\square/>`>`>`>/[A_2 `A_1`B_2`B_1;`F_2`F_1`]

\square|allb|/@{>}@<-3pt>`` `@{>}@<-3pt>/[A_2 `A_1`B_2`B_1;```]

\square|allb|/@{>}@<3pt>```@{>}@<3pt>/[A_2 `A_1`B_2`B_1;```]

\square(500,0)/<-`>`>`<-/[A_1 `A_0`B_1`B_0;``F_0`]

\square(500,0)|allb|/@{>}@<-3pt>```@{>}@<-3pt>/[A_1 `A_0`B_1`B_0;```]

\square(500,0)|allb|/@{>}@<3pt>```@{>}@<3pt>/[A_1 `A_0`B_1`B_0;```]

\efig
$$
commuting with $ \partial_0, \partial_1$ (so $ F_2 $ is determined by $ F_0 $ and $ F_1 $). As in \cite{GP} we denote the laxity cells by $ \underline{F} $, or more explicitly
$$
\bfig

\square[A_0`A_1`B_0`B_1;\id `F_0 `F_1 `\id]

\morphism(230,200)/=>/<100,100>[`;\underline{F}_0]

\place(850,250)[\mbox{and}]

\square(1200,0)[A_2 `A_1`B_2`B_1;m `F_2`F_1`m]

\morphism(1430,200)/=>/<100,100>[`;\underline{F}_2]

\efig
$$
Similar notation applies to $ G, U, V $. The cell $ \pi $ then consists of three cells, $ \pi_0, \pi_1, \pi_2 $
$$
\bfig

\square[A_i`B_i`C_i`D_i;F_i`U_i`V_i`G_i]

\morphism(320,290)/=>/<-100,-100>[`;\pi_i]

\efig
$$
which preserve domains and codomains (so $ \pi_2 $ is determined by $ \pi_0 $ and $\pi_1 $) and satisfy
$$
\bfig

\square(0,300)[A_i ` B_i `C_i `D_i; F_i` U_i`V_i `G_i]

\square(300,0)/``>`>/[` B_1 `C_1 `D_1;` `V_1 `G_1]

\morphism(500,800)<300,-300>[B_i ` B_1;]

\morphism(0,300)|b|<300,-300>[C_i`C_1;]

\morphism(500,300)<300,-300>[D_i `D_1;]

\morphism(300,600)/=>/<-100,-100>[`;\pi_i]

\morphism(450,170)/=>/<-100,-100>[`;\underline{G}_i]

\morphism(750,400)/=>/<-100,-100>[`;\underline{V}_i]

\place(1100,400)[=]

\square(1400,300)/>`>``/[A_i ` B_i `C_i `;F_i `U_i ` `]

\square(1700,0)[A_1` B_1 `C_1 `D_1; G_1 ` U_1 `V_1 `G_1]

\morphism(1400,800)|a|<300,-300>[A_i` A_1;]

\morphism(1900,800)<300,-300>[B_i ` B_1;]

\morphism(1400,300)|b|<300,-300>[C_i`C_1;]

\morphism(2050,250)/=>/<-100,-100>[`;\pi_1]

\morphism(1650,400)/=>/<-100,-100>[`;\underline{U}_i]

\morphism(1900,700)/=>/<-100,-100>[`;\underline{F}_i]

\efig
$$
where the transversal arrows are $ \id$'s for $ i = 0 $ and $m$'s for $ i = 2 $. The $ \pi_i $ can be pasted horizontally and vertically and associativity, unit laws and interchange hold strictly. Note that the above conditions are commutative cubes of $2$-cells whose front is $ \pi_1 $ and back $ \pi_0 $ and $ \pi_2 $ respectively. So they may be pasted horizontally and vertically as well, and the pasted $\pi$'s still satisfy these conditions.

\end{proof}

What this gives us when applied to the $2$-category $ \Dlax$ are two kinds of morphism of intercategory and a corresponding notion of double cell. We can also apply it to the $2$-category $\Dcolax $ to get the same kind of thing. To see how they are related we must express them in more elementary terms.  First let's examine lax functors
$$
\bfig

\square/>`>`>`>/[{\mathbb C} ` {\mathbb B} `\ov{\mathbb C} `\ov{\mathbb B};`H`G`]

\square|allb|/@{>}@<-3pt>`` `@{>}@<-3pt>/[{\mathbb C} ` {\mathbb B} `\ov{\mathbb C} `\ov{\mathbb B};```]

\square|allb|/@{>}@<3pt>```@{>}@<3pt>/[{\mathbb C} ` {\mathbb B} `\ov{\mathbb C} `\ov{\mathbb B};```]

\square(500,0)/<-`>`>`<-/[{\mathbb B} `{\mathbb A} `\ov{\mathbb B} `\ov{\mathbb A};``F`]

\square(500,0)|allb|/@{>}@<-3pt>```@{>}@<-3pt>/[{\mathbb B} `{\mathbb A} `\ov{\mathbb B} `\ov{\mathbb A};```]

\square(500,0)|allb|/@{>}@<3pt>```@{>}@<3pt>/[{\mathbb B} `{\mathbb A} `\ov{\mathbb B} `\ov{\mathbb A};```]

\efig
$$
of pseudo-category objects in $ \Dlax $. $ F, G, H $ are lax functors as they come from $ \Dlax $ but we also have laxity transformations
$$
\bfig
\square[{\mathbb A} `{\mathbb B} `\ov{\mathbb A} `\ov{\mathbb B}; \id ` F ` G `\id]
\morphism(250,200)/=>/<100,100>[`;\underline{F}]

\place(850,250)[\mbox{and}]

\square(1200,0)[{\mathbb C} `{\mathbb B} `\ov{\mathbb C} `\ov{\mathbb B};m` H `G `m]

\morphism(1450,200)/=>/<100,100>[`;\underline{H}]

\efig
$$
$ F $ is given by functors $ F_i : {\bf A}_i \to \ov{{\bf A}_i} $, $ i = 0, 1, 2 $ and the same for $ G $ and $ H $, and all domains and codomains are preserved. Let's denote the corresponding morphism of intercategories by $ \Phi : {\ic A} \to \ov{\ic A} $.

Thus $ \Phi $ takes objects of $ {\ic A} $ to objects of $ \ov{\ic A} $ and transversal arrows to transversal arrows, given by $ F_0 $. It takes vertical arrows and vertical cells to similar ones in $ \ov{\ic A} $, given by $ F_1 $. Similarly it takes horizontal arrows and cells to the same kind in $ \ov{\ic A} $ (given by $ G_0 $) and basic cells and cubes to like ones in $ \ov{\ic A} $ (given by $ G_1 $). All domains and codomains are respected, as illustrated by
$$
\bfig

\square(0,300)/>`>``/[A `A' `\ov{A}`;a ` v ` `]

\square(300,0)[B ` B' `\ov{B} `\ov{B'}; `  ``]

\morphism(0,800)|b|<300,-300>[A` B;]

\morphism(500,800)<300,-300>[A' ` B';]

\morphism(0,300)|b|<300,-300>[\ov{A}`\ov{B};\ov{f}]

\place(550,250)[\scriptstyle \beta]

\place(150,400)[\scriptstyle \gamma]

\place(400,650)[\scriptstyle \alpha]

\place(1200,400)[\longmapsto]

\square(1600,300)/>`>``/[\Phi A ` \Phi A' `\Phi\ov{A} `;\Phi a ` \Phi v ` `]

\square(1900,0)[\Phi B ` \Phi B' `\Phi\ov{B} `\Phi\ov{B'}; `  ` `]

\morphism(1600,800)|b|<300,-300>[\Phi A` \Phi B;]

\morphism(2100,800)<300,-300>[\Phi A' ` \Phi B';]

\morphism(1600,300)|b|<300,-300>[\Phi\ov{A}`\Phi\ov{B};\Phi\ov{f}]

\place(2150,250)[\scriptstyle \Phi \beta]

\place(1750,400)[\scriptstyle \Phi \gamma]

\place(2000,650)[\scriptstyle \Phi\alpha]

\efig
$$
Transversal composition (of arrows, horizontal and vertical cells) is preserved strictly. For vertical composition we are given comparison cells, coming from $ F $ and $ G $. For an object $ A $, a vertical cell
$$
\bfig

\morphism(0,600)|l|<0,-600>[\Phi A`\Phi A;\Id_{\Phi A}]

\morphism(0,600)/=/<300,-300>[\Phi A`\Phi A;]

\morphism(0,0)/=/<300,-300>[`\Phi A;]

\morphism(300,300)|r|<0,-500>[`;\Phi(\Id_A)]

\place(150,200)[\scriptstyle \phi_v(A)]

\efig
$$
and for a horizontal arrow $ a : A \to A' $ a cube
$$
\bfig

\square(0,300)[\Phi A ` \Phi A' ` \Phi A `\Phi A';\Phi a `\Id_{\Phi A} `\Id_{\Phi A'} `\Phi a]

\square(300,0)/``>`>/[` \Phi A' `\Phi A `\Phi A';` `\Phi(\Id_{A'}) `\Phi a]

\morphism(500,800)/=/<300,-300>[\Phi A'` \Phi A';]

\morphism(0,300)|b|/=/<300,-300>[\Phi A`\Phi A;]

\morphism(500,300)/=/<300,-300>[\Phi A' `\Phi A';]

\place(250,550)[\scriptstyle \Id_{\Phi a}]

\place(400,150)[=]

\place(650,400)[\scriptstyle \phi_v (A')]

\place(1400,400)[\to^{\phi_v(a)}]

\square(1900,300)/>`>``/[\Phi A ` \Phi A' `\Phi A`;\Phi a ` \Id_{\Phi A} ` `]

\square(2200,0)[\Phi A ` \Phi A'`\Phi A `\Phi A'; \Phi a `  `\Phi(\Id_{A'}) `\Phi a]

\morphism(1900,800)|b|<300,-300>[\Phi A` \Phi A;]

\morphism(2400,800)<300,-300>[\Phi A' ` \Phi A';]

\morphism(1900,300)|b|<300,-300>[\Phi A`\Phi A;]

\place(2450,250)[\scriptstyle \Phi (\Id_a)]

\place(2050,400)[\scriptstyle \phi_v(A)]

\place(2300,650)[=]

\efig
$$
Following our simplified notation of Section 4, we write both of these as
$$
\bfig

\place(500,0)[\phi_v : \Id_\Phi \to \Phi (\Id).]

\place(2000,0)[(1)]

\place(-1000,0)[\ ]

\efig
$$
For vertical arrows $ A \todo{v} \ov{A} \todo{\ov{v}} \ovv{A} $ we are given vertical cells
$$
\bfig

\morphism(0,700)|l|/@{>}|{\bb}/<0,-350>[\Phi A `\Phi\ov{A};\Phi v]

\morphism(0,350)/@{>}|{\bb}/<0,-350>[\Phi \ov{A}`\Phi \ovv{A};\Phi\ov{v}]

\morphism(0,700)/=/<500,-350>[\Phi A`\Phi A;]

\morphism(0,0)/=/<500,-350>[\Phi\ovv{A}`\Phi\ovv{A};]

\morphism(500,350)|r|/>/<0,-700>[\Phi A`\Phi\ovv{A};\Phi(v\cdot \ov{v})]

\place(275,150)[\scriptstyle \phi_v (v,\ov{v})]

\efig
$$
and for basic cells
$$
\bfig

\square[\ov{A} `\ov{A'} `\ovv{A} `\ovv{A'};`\ov{v}`\ov{v'}`]

\square(0,500)[A`A'`\ov{A}`\ov{A'};`v`v'`]

\place(250,250)[\scriptstyle \ov{\alpha}]

\place(250,750)[\scriptstyle \alpha]

\efig
$$
we are given cubes

$$
\bfig

\square(0,300)[\Phi\ov{A} `\Phi\ov{A'} `\Phi\ovv{A} `\Phi\ovv{A'};`\Phi\ov{v}``]

\square(0,800)[\Phi A `\Phi A'`\Phi \ov{A} `\Phi\ov{A'};`\Phi v``]

\square(300,0)/``>`>/<500,1000>[`\Phi A' `\Phi\ovv{A}`\Phi\ovv{A'};``\Phi (v' \cdot \ov{v'})`]

\morphism(0,300)/=/<300,-300>[\Phi\ovv{A}`\Phi\ovv{A};]

\morphism(500,300)/=/<300,-300>[\Phi\ovv{A'}`\Phi\ovv{A'};]

\morphism(500,1300)/=/<300,-300>[\Phi A'`\Phi A';]

\place(250,550)[\scriptstyle \Phi \ov{\alpha}]

\place(250,1050)[\scriptstyle \Phi \alpha]

\place(650,650)[\scriptstyle \phi_v]

\place(1300,650)[\to^{\phi_v(\alpha, \ov{\alpha})}]

\square(1800,800)/>`>``/[\Phi A `\Phi A'`\Phi\ov{A}`;`\Phi v``]

\morphism(1800,800)|l|<0,-500>[\Phi\ov{A}`\Phi\ovv{A};\Phi\ov{v}]

\square(2100,0)<500,1000>[\Phi A`\Phi A'`\Phi\ovv{A}`\Phi\ovv{A'};``\Phi (v'\cdot\ov{v'})`]

\morphism(1800,300)/=/<300,-300>[\Phi\ovv{A}`\Phi\ovv{A};]

\morphism(1800,1300)/=/<300,-300>[\Phi A`\Phi A;]

\morphism(2300,1300)/=/<300,-300>[\Phi A'`\Phi A';]

\place(2350,500)[\scriptstyle \Phi (\alpha \cdot \ov{\alpha})]

\place(1950,650)[\scriptstyle \phi_v]

\efig
$$
We denote this by
$$
\bfig

\place(0,0)[\phi_v : \frac{\Phi \alpha}{\Phi \ov{\alpha}} \ \to \ \Phi \left(\frac{\alpha}{\ov{\alpha}}\right)]

\place(1500,0)[(2)]

\place(-1500,0)[\ ]

\efig
$$

The transformation $ \underline{F} : (\id) F \to G(\id) $ is given by assigning to each object $ A $ of $ {\ic A} $ a horizontal arrow $ F(A) : \id_{FA} \to G(\id_A) $ of $ \ov{\mathbb B} $, i.e.\ to each object of $ {\ic A} $ a horizontal cell
$$
\bfig

\morphism(0,300)/=/<300,-300>[\Phi A `\Phi A;]

\morphism(0,300)|a|/>/<500,0>[\Phi A`\Phi A;\id_{\Phi A}]

\morphism(300,0)|b|/>/<500,0>[\Phi A`\Phi A;\Phi (\id_A)]

\morphism(500,300)/=/<300,-300>[\Phi A`\Phi A;]

\place(400,150)[\scriptstyle \phi_h(A)]

\efig
$$
and to each vertical arrow of $ {\mathbb A} $ a corresponding cell of $\ov{\mathbb B} $, i.e.\ for each vertical arrow $ A \todo{v} A' $ of $ {\ic A} $ a cube
$$
\bfig

\square(0,300)[\Phi A `\Phi A `\Phi A' `\Phi A'; \id_{\Phi A} `\Phi v `\Phi v `\id_{\Phi A'}]

\square(300,0)/``>`>/[` \Phi A `\Phi A' `\Phi A';` `\Phi v `\Phi(\id_{A'})]

\morphism(500,800)/=/<300,-300>[\Phi A `\Phi A;]

\morphism(0,300)|b|/=/<300,-300>[\Phi A'`\Phi A';]

\morphism(500,300)/=/<300,-300>[\Phi A' `\Phi A';]

\place(250,550)[\scriptstyle \id_{\Phi v}]

\place(400,150)[\scriptstyle \phi_h(A')]

\morphism(625,425)/=/<50,-50>[`;]

\place(1250,400)[\to^{\phi_h(v)}]

\square(1700,300)/>`>``/[\Phi A ` \Phi A `\Phi A' `;\id_{\Phi A} ` \Phi v ` `]

\square(2000,0)[\Phi A ` \Phi A `\Phi A' `\Phi A';  ` \Phi v `\Phi v `]

\morphism(1700,800)|b|/=/<300,-300>[\Phi A`\Phi A;]

\morphism(2200,800)/=/<300,-300>[\Phi A ` \Phi A;]

\morphism(1700,300)|b|/=/<300,-300>[\Phi A'`\Phi A';]

\place(2250,250)[\scriptstyle \Phi (\id_v)]

\morphism(1800,425)/=/<50,-50>[`;]

\place(2100,650)[\scriptstyle \phi_h(A')]

\efig
$$
We denote both of these by
$$
\bfig

\place(0,0)[\phi_h : \id_\Phi \to \Phi(\id)]

\place(1500,0)[(3)]

\place(-1500,0)[\ ]

\efig
$$
Similarly the transformation $ {\underline H} : mH \to Gm $ is given by
$$
\bfig

\place(0,0)[\phi_h : \Phi\alpha | \Phi \beta \to \Phi (\alpha|\beta).]

\place(1500,0)[(4)]

\place(-1500,0)[\ ]

\efig
$$

The transformations (1)-(4) have to satisfy the following conditions.

First, vertical laxity (laxity of $ F $ and $ G $)
$$
\bfig

\square/>`>`>`<-/<800,500>[\dfrac{\Id_{\Phi h}}{\Phi\alpha} 
`\dfrac{\Phi(\Id_h)}{\Phi\alpha} 
`\Phi\alpha 
`\Phi\left(\dfrac{\Id_h}{\alpha}\right);
\frac{\phi_v}{\Phi\alpha} 
`\lambda' 
`\phi_v 
`\Phi(\lambda')]

\place(400,250)[\scriptstyle (5)]

\square(1500,0)/>`>`>`<-/<800,500>[\dfrac{\Phi\alpha}{\Id_{\Phi \ov{h}}}  
   `\dfrac{\Phi \alpha}{\Phi (\Id_{\ov{h}})} 
   `\Phi \alpha 
   `\Phi \left(\dfrac{\alpha}{\Id_{\ov{h}}}\right);
\frac{\Phi\alpha}{\phi_v} 
`\rho' 
` \phi_v 
`\Phi(\rho')]

\place(1900,250)[\scriptstyle (6)]

\efig
$$

$$
\bfig

\square<1000,1000>[\dfrac{\Phi\alpha}{\dfrac{\Phi\beta}{\Phi\gamma}}
    `\dfrac{\Phi\left(\dfrac{\alpha}{\beta}\right)}{\Phi\gamma}
    `\dfrac{\Phi\alpha}{\Phi\left(\dfrac{\beta}{\gamma}\right)}
     `\Phi\left(\dfrac{\alpha}{\dfrac{\beta}{\gamma}}\right);
 \frac{\phi_v}{\Phi\gamma} `\frac{\Phi\alpha}{\phi_v} `\phi_v `\phi_v]

\place(500,500)[\scriptstyle (7)]

\efig
$$
Next, horizontal laxity (laxity of $ \underline{F}, \underline{H}$)
$$
\bfig

\square/>`>`>`<-/<800,500>[\id_{\Phi v} |\Phi\alpha 
         `\Phi(\id_v) | \Phi\alpha
     `\Phi\alpha 
` \Phi(\id_v | \alpha);
  \phi_h | \Phi\alpha `\lambda ` \phi_h `\Phi(\lambda)]

\place(400,250)[\scriptstyle (8)]

\square(1500,0)/>`>`>`<-/<800,500>[\Phi\alpha | \id_{\Phi w}
      `\Phi\alpha | \Phi(\id_w)
      `\Phi \alpha 
   `\Phi(\alpha | \id_w);
   \Phi\alpha | \phi_h `\rho ` \phi_h `\Phi(\rho)]

\place(1900,250)[\scriptstyle (9)]

\efig
$$
$$
\bfig

\square<1200,500>[\Phi \alpha | \Phi\beta| \Phi \gamma  `\Phi(\alpha |\beta) | \Phi \gamma 
    `\Phi \alpha |\Phi(\beta|\gamma)  ` \Phi (\alpha |\beta |\gamma);\phi_h |\Phi \gamma `\Phi \alpha |\phi_h `\phi_h `\phi_h]

\place(600,250)[\scriptstyle (10)]

\efig
$$
Finally, vertical/horizontal compatibility ($ {\underline F}, {\underline H} $ horizontal transformations)
$$
\bfig

\square/>`>``>/<600,500>[\Id_{\id_{\Phi A}}  
`\Id_{\Phi(\id_A)} 
`\id_{\Id_{\Phi A}} 
`\id_{\Phi(\Id_A)};
\Id_{\phi_h} 
`\tau 
`
`\id_{\phi_v}]

\square(600,0)/>``>`>/<600,500>[\Id_{\Phi (\id_A)}  
`\Phi(\Id_{\id_A}) 
`\id_{\Phi(\Id_A)} 
`\Phi(\id_{\Id_A});
\phi_v``\Phi(\tau)`\phi_h]

\place(600,250)[\scriptstyle (11)]

\efig
$$
$$
\bfig

\square/>`>``>/<800,500>[\dfrac{\id_{\Phi v}}{\id_{\Phi \ov{v}}}
   `\dfrac{\Phi\id_v}{\Phi\id_{\ov{v}}}
    `\id_{\frac{\Phi v}{\Phi \ov{v}}}
    `\id_{\Phi \frac{v}{\ov{v}}};
\frac{\phi_h}{\phi_h} `\mu ``\id_{\phi_v}]

\square(800,0)/>``>`>/<800,500>[\dfrac{\Phi\id_v}{\Phi\id_{\ov{v}}}
    `\Phi\left(\dfrac{\id_v}{\id_{\ov{v}}}\right)
    `\id_{\Phi \frac{v}{\ov{v}}}
    `\Phi(\id_{\frac{v}{\ov{v}}});
  \phi_v ``\Phi_\mu`\phi_h]

\place(800,250)[\scriptstyle (12)]

\efig
$$
$$
\bfig

\square/>`>``>/<800,500>[\Id_{\Phi h | \Phi k} 
`\Id_{\Phi (h | k)}
`\Id_{\Phi h} | \Id_{\Phi  k}
` \Phi(\Id_h) | \Phi(\Id_k);
\Id_{\phi_h} 
`\delta``\phi_v |\phi_v]

\square(800,0)/>``>`>/<800,500>[\Id_{\Phi (h | k)} 
` \Phi(\Id_{h | k}) 
`\Phi(\Id_h) | \Phi(\Id_k) 
`\Phi(\Id_h |\Id_k);
\phi_v ``\Phi\delta `\phi_h]

\place(800,250)[\scriptstyle (13)]

\efig
$$
$$
\bfig

\square/>`>``>/<1000,600>[\dfrac{\Phi\alpha | \Phi \beta}{\Phi \ov{\alpha} | \Phi \ov{\beta}}
     `\dfrac{\Phi(\alpha | \beta)}{\Phi(\ov{\alpha} | \ov{\beta})}
   `\left.\dfrac{\Phi\alpha}{\Phi\ov{\alpha}}\right|\dfrac{\Phi\beta}{\Phi\ov{\beta}}
 `\left.\Phi\left(\dfrac{\alpha}{\ov{\alpha}}\right)\right|\Phi \left(\dfrac{\beta}{\ov{\beta}}\right);
  \frac{\phi_h}{\phi_h}
`\chi ``\phi_v | \phi_v]

\square(1000,0)/>``>`>/<1000,600>[\dfrac{\Phi(\alpha | \beta)}{\Phi(\ov{\alpha} | \ov{\beta})}
    `\Phi\left(\dfrac{\alpha | \beta}{\ov{\alpha} | \ov{\beta}} \right)
  `\left.\Phi\left(\dfrac{\alpha}{\ov{\alpha}}\right)\right|\Phi \left(\dfrac{\beta}{\ov{\beta}}\right)
  `\Phi\left(\left.\dfrac{\alpha}{\ov{\alpha}}\right|\dfrac{\beta}{\ov{\beta}}\right);
   \phi_v ``\Phi(\chi) `\phi_h]

\place(1000,300)[\scriptstyle (14)]

\efig
$$

The data (1)-(4) satisfying conditions (5)-(14) is a complete description of what we call {\em lax-lax morphisms} of intercategories. They compose in the obvious way and the unit laws and associativity hold strictly. This is because composition comes from the transversal structure.

If, instead, we consider colax morphisms, still between pseudocategories in $ \Dlax $, then the $ F, G, H $ above are still lax functors, but now the transformations $ {\underline F} $ and ${\underline H} $ go in the opposite direction. This means that the transformations $ \phi_v $ of (1) and (2) remain the same whereas the $ \phi_h $ of (3) and (4) are in the opposite direction. Examining conditions (5)-(14) we see that (5)-(7) are unchanged but in (8)-(14) the reversal of $ \phi_h $ produces a new commutative diagram of a similar sort. We call this sort of morphism {\em colax-lax}.

We can now give an explicit description of the cells $ \pi $ introduced in Theorem \ref{PC}. Consider a diagram of intercategories
$$
\bfig

\square[{\ic A} `{\ic B} `{\ic C} `{\ic D};\Phi ` \Sigma ` \Theta`\Psi]

\place(250,250)[\scriptstyle \pi]

\efig
$$
where $ \Phi $ and $ \Psi $ are lax-lax and $ \Sigma $ and $ \Theta $ colax-lax. A cell $ \pi $ as above consists of:

\noindent -- for every object $ A $ of $ {\ic A} $, a transversal arrow in $ {\ic D} $
$$\bfig

\place(-1500,0)[\ ]

\place(0,0)[\pi A : \Theta \Phi A \to \Psi \Sigma A]

\place(1500,0)[(1)]

\efig
$$ 
-- for every vertical arrow $ v : A \tod \ov{A} $ of $ {\ic A} $, a vertical cell in $ {\ic D} $
$$
\bfig
\btriangle(0,500)/@{>}|{\bb}`>`/[\Theta \Phi A `\Theta \Phi \ov{A} `\Psi \Sigma A;\Theta \Phi v `\pi A` \pi v]

\qtriangle(0,0)/`>`@{>}|{\bb}/[\Theta \Phi \ov{A} ` `\Psi \Sigma\ov{A};`\pi\ov{A}`\Psi\Sigma v]

\efig
$$
$$\bfig

\place(-1500,0)[\ ]

\place(0,0)[\pi v : \Theta \Phi v \to \Psi \Sigma v ]

\place(1500,0)[(2)]

\efig
$$ 
-- for every horizontal arrow $ f : A \toc A' $ of $ {\ic A} $ a horizontal cell in $ {\ic D} $
$$
\bfig\scalefactor{1.2}

\morphism(0,0)/@{>}|{\cc}/[\Theta \Phi A `\Theta \Phi A';\Theta \Phi f]

\morphism(300,-300)|b|/@{>}|{\cc}/[\Psi \Sigma A `\Psi \Sigma A';\Psi \Sigma f]

\morphism(0,0)|b|<300,-300>[\Theta \Phi A`\Psi \Sigma A;\pi A]

\morphism(500,0)<300,-300>[\Theta \Phi A'`\Psi \Sigma A';\pi A']

\place(400,-150)[\scriptstyle \pi f]

\efig
$$
$$\bfig

\place(-1500,0)[\ ]

\place(0,0)[\pi f : \Theta \Phi f \to \Psi \Sigma f]

\place(1500,0)[(3)]

\efig
$$
-- for every basic cell
$$
\bfig

\square/>`@{>}|{\bb}`@{>}|{\bb}`>/[A `A'`\ov{A} `\ov{A'}; f `v `v'`\ov{f}]

\place(250,250)[\scriptstyle \alpha]

\efig
$$
a cube $ \pi \alpha $ in ${\ic D} $
$$
\bfig\scalefactor{1.2}

\square(0,300)/@{>}|{\cc}`@{>}|{\bb}``/[\Theta \Phi A ` \Theta \Phi A' `\Theta\Phi\ov{A}`;\Theta\Phi f `\Theta\Phi v` `]

\square(300,0)/@{>}|{\cc}`@{>}|{\bb}`@{>}|{\bb}`@{>}|{\cc}/[\Psi \Sigma A ` \Psi \Sigma A' `\Psi \Sigma \ov{A} `\Psi \Sigma \ov{A'};``\Psi \Sigma \ov{v}`\Psi\Sigma \ov{f}]

\morphism(0,800)|b|<300,-300>[\Theta\Phi A`\Psi\Sigma A;]

\morphism(500,800)<300,-300>[\Theta\Phi A' ` \Psi\Sigma A';\pi A']

\morphism(0,300)|b|<300,-300>[\Theta \Phi \ov{A}`\Psi \Sigma \ov{A};\pi\ov{A}]

\place(550,250)[\scriptstyle \Psi \Sigma \alpha]

\place(150,400)[\scriptstyle \pi v]

\place(400,650)[\scriptstyle \pi f]

\efig
$$
$$
\bfig

\place(0,0)[\pi \alpha : \Theta \Phi \alpha \to \Psi \Sigma \alpha]

\place(-1500,0)[\ ]

\place(1500,-0)[(4)] 

\efig
$$

(1)-(4) set up the structure of $ \pi $ and make the domains and codomains explicit, which are just as one would expect. We give the equations that $ \pi $ must satisfy only at the highest level with the understanding that they imply corresponding ones lower down so as to make the domains and codomains work.

There are two groups of equations. The first expressing that $ \pi $ is made up of $2$-cells in $ \Dlax $.
$$
\bfig

\square/>`>``>/<600,500>[\Id_{\Theta\Phi h} 
`\Theta(\Id_{\Phi h}) 
`\Id_{\Psi\Sigma h} 
`\Psi(\Id_{\Sigma h});
\theta_v 
`\Id_{\pi h}``\psi_v]

\square(600,0)/>``>`>/<600,500>[\Theta(\Id_{\Phi h}) 
`\Theta \Phi(\Id_h) 
`\Psi(\Id_{\Sigma h}) 
`\Psi\Sigma(\Id_h);
\Theta \phi_v ``\pi(\Id_h) `\Psi(\sigma_v)]

\place(600,250)[\scriptstyle (5)]

\efig
$$

$$
\bfig

\square/>`>``>/<600,500>[\frac{\Theta\Phi\alpha}{\Theta\Phi\beta}  `\Theta(\frac{\Phi\alpha}{\Phi\beta}) `\frac{\Psi\Sigma\alpha}{\Psi\Sigma\beta} ` \Psi(\frac{\Sigma\alpha}{\Sigma\beta});\theta_v `\frac{\pi\alpha}{\pi\beta} ``\psi_v]

\square(600,0)/>``>`>/<600,500>[\Theta(\frac{\Phi\alpha}{\Phi\beta}) `\Theta\Phi(\frac{\alpha}{\beta}) `\Psi(\frac{\Sigma\alpha}{\Sigma\beta}) `\Psi\Sigma(\frac{\alpha}{\beta});\Theta\phi_v``\pi(\frac{\alpha}{\beta})`\Psi(\sigma_v)]

\place(600,250)[\scriptstyle (6)]

\efig
$$
The second group expressing that $ \pi $ is a cell as in Theorem \ref{PC}.
$$
\bfig

\Ctriangle/<-``>/[\id_{\Theta\Phi v}
`\Theta(\id_{\Phi v}) 
`\Theta\Phi(\id_v);
\theta_h``\Theta\phi_h]

\square(500,0)/>```>/<700,1000>[\id_{\Theta\Phi v} 
` \id_{\Psi\Sigma v}
` \Theta\Phi(\id_v) 
`\Psi\Sigma(\id_v);
\id_{\pi v} ```\pi(\id_v)]

\Dtriangle(1200,0)/`>`<-/[\id_{\Psi\Sigma v} 
`\Psi(\id_{\Sigma v})
`\Psi\Sigma(\id_v);
`\psi_h`\Psi(\sigma_h)]

\place(900,500)[\scriptstyle (7)]

\efig
$$

$$
\bfig

\Ctriangle/<-``>/[\Theta\Phi\alpha|\Theta\Phi\beta `\Theta(\Phi\alpha | \Phi \beta) `\Theta\Phi (\alpha| \beta); \theta_h``\Theta(\phi_h)]

\square(500,0)/>```>/<900,1000>[\Theta\Phi\alpha|\Theta\Phi\beta `\Psi\Sigma\alpha | \Psi \Sigma\beta `\Theta\Phi (\alpha| \beta) `\Psi\Sigma(\alpha|\beta);\pi\alpha | \pi\beta ```\pi(\alpha|\beta)]

\Dtriangle(1400,0)/`>`<-/[\Psi\Sigma\alpha | \Psi \Sigma\beta `\Psi(\Sigma\alpha|\Sigma\beta)`\Psi\Sigma(\alpha|\beta);`\psi_h`\Psi(\sigma_h)]

\place(1000,500)[\scriptstyle (8)]

\efig
$$

Horizontal and vertical composition comes from the pasting of cubes, as explained in Theorem \ref{PC}, and is defined in terms of transversal composition. For cells
$$
\bfig

\square[{\ic B} `{\ic B'} `{\ic D} `{\ic D'};\Phi' `\Theta `\Theta'`\Psi']

\place(850,250)[\mbox{and}]

\square(1200,0)[{\ic C} `{\ic D} `\ov{\ic C} ` \ov{\ic D};\Psi `\ov{\Sigma} `\ov{\Theta} `\ov{\Psi}]

\place(250,250)[\scriptstyle \pi']

\place(1450,250)[\scriptstyle \ov{\pi}]

\efig
$$

$$
(\pi | \pi') (\alpha)  =  (\Theta' \Phi' \Phi \alpha \to^{\pi' \Phi \alpha} \Psi' \Theta \Phi \alpha \to^{\Psi'\pi\alpha} \Psi'\Psi \Sigma \alpha) = (\Psi'\pi\alpha) (\pi'\Phi \alpha),
$$
$$
\frac{\pi}{\ov{\pi}} (\alpha) = (\ov{\Theta} \Theta \Phi \alpha \to^{\ov{\Theta}\pi\alpha} \ov{\Theta}\Psi\Sigma\alpha \to^{\ov{\pi}\Sigma\alpha} \ov{\Psi}\ov{\Sigma} \Sigma \alpha) = (\ov{\pi}\Sigma \alpha) (\ov{\Theta} \pi \alpha).
$$

We have just described the double category $ {\mathbb P}{\rm s}{\mathbb C}{\rm at}(\Dlax) $. We can now do the same for $ {\mathbb P}{\rm s}{\mathbb C}{\rm at}(\Dcolax) $ keeping in mind the identification of pseudocategories in $ \Dlax $ with pseudocategories in $ \Dcolax $.

An internal lax functor is, in part, a diagram
$$
\bfig

\square/>`>`>`>/[{\mathbb X}_2 ` {\mathbb X}_1 `{\mathbb Y}_2 `{\mathbb Y}_1;`T_2``]

\square|allb|/@{>}@<-3pt>`` `@{>}@<-3pt>/[{\mathbb X}_2 ` {\mathbb X}_1 `{\mathbb Y}_2 `{\mathbb Y}_1;```]

\square|allb|/@{>}@<3pt>```@{>}@<3pt>/[{\mathbb X}_2 ` {\mathbb X}_1 `{\mathbb Y}_2 `{\mathbb Y}_1;```]

\square(500,0)/<-`>`>`<-/[{\mathbb X}_1 ` {\mathbb X}_0 `{\mathbb Y}_1 `{\mathbb Y}_0;`T_1`T_0`]

\square(500,0)|allb|/@{>}@<-3pt>```@{>}@<-3pt>/[{\mathbb X}_1 ` {\mathbb X}_0 `{\mathbb Y}_1 `{\mathbb Y}_0;```]

\square(500,0)|allb|/@{>}@<3pt>```@{>}@<3pt>/[{\mathbb X}_1 ` {\mathbb X}_0 `{\mathbb Y}_1 `{\mathbb Y}_0;```]

\efig
$$
where the $ T_i $ are colax functors of double categories. So $ T_0 $, for example, comes with comparison cells
$$
T_0 (x \cdot \ov{x}) \to T_0 x \cdot T_0\ov{x}, \quad  T_0(\id) \to \id_{T_0},
$$
where $ x $ and $ \ov{x} $ are vertical arrows of $ {\mathbb X}_0 $. Now, when an intercategory $ {\ic A} $ is considered as a pseudo-category in $ \Dcolax $ as above, the vertical arrows of $ {\mathbb X}_0 $ are the horizontal arrows of $ {\ic A} $. It will follow then that if a morphism of intercategories is represented as a diagram in $ \Dcolax $ it will be horizontally colax. If we further have cells
$$
\bfig

\square[{\mathbb X}_0 `{\mathbb X}_1 ` {\mathbb Y}_0 `{\mathbb Y}_1;\Id ` T_0 `T_1 `\Id]

\morphism(200,200)/=>/<100,100>[`;]

\square(1000,0)[{\mathbb X}_2 ` {\mathbb X}_1 `{\mathbb Y}_2 `{\mathbb Y}_1;M `T_2 `T_1`M]

\morphism(1200,200)/=>/<100,100>[`;]

\efig
$$
to give us an internal lax functor, we get comparisons for vertical identities and composition at the intercategory level. That is, a lax functor of pseudocategories in $ \Dcolax $ gives a colax-lax morphism of intercategories. This is the same as a colax functor of pseudocategories in $ \Dlax $.

We summarize this discussion in the following.

\begin{theorem} \label{CLM} Under the identifications of Theorem \ref{equivalence}, colax-lax morphisms correspond to either internal colax functors between pseudocategories in $ \Dlax $ or internal lax functors between pseudocategories in $ \Dcolax $.

\end{theorem}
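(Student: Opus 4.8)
The plan is to treat this statement as a pure bookkeeping identification, carried out through the dictionary of the table at the end of Section~\ref{3x3}; there is no mathematical content beyond keeping track of which comparison cells reverse direction in passing between $\Dlax$ and $\Dcolax$. The assertion is really two independent matchings of data-and-axioms, so for each I would simply write down both sides and observe that they coincide. In fact the discussion immediately preceding the theorem already performs most of the work informally; the proof consists of making it precise.

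For the first matching --- colax--lax morphisms against internal colax functors between pseudocategories in $\Dlax$ --- there is essentially nothing to do beyond recording what was established in Section~\ref{mor}. A colax functor of pseudocategory objects in $\Dlax$ has the same underlying triple $F,G,H$ of lax functors as a lax one, with the transformations $\underline F,\underline H$ reversed; running this reversal through the analysis that produced the data (1)--(4) and conditions (5)--(14) shows that $\phi_v$ keeps its orientation, $\phi_h$ is reversed, (5)--(7) are unchanged, and (8)--(14) turn into the corresponding diagrams with $\phi_h$ flipped --- which is exactly the definition of a colax--lax morphism. So this half is a pointer back to that discussion.

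The substance is the second matching. Here I would unwind an internal lax functor $T\colon\mathbb X\to\mathbb Y$ between pseudocategories in $\Dcolax$: it consists of colax double functors $T_0,T_1,T_2$ commuting with the $D_i$ and $P_j$ (so $T_2$ is determined by $T_0,T_1$), together with horizontal transformations $\underline T$ over $\Id$ and over $M$ (themselves $2$-cells of $\Dcolax$), subject to the internal unit and associativity laws. Reading $T_0$ and $T_1$ through the table, they are precisely $\Phi$ on, respectively, the ``horizontal half'' (objects, transversal arrows, horizontal arrows, horizontal cells) and the ``vertical half'' (vertical arrows, vertical cells, basic cells, cubes) of ${\ic A}$. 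The key point is that the vertical arrows of $\mathbb X_0$ are the horizontal arrows of ${\ic A}$, so the colax structure cells of $T_0$ (and of $T_1,T_2$ on the higher rows) assemble into the horizontal comparisons $\phi_h$ of (3)--(4), now pointing $\Phi(\id)\to\id_\Phi$ and $\Phi(\alpha\,|\,\beta)\to\Phi\alpha\,|\,\Phi\beta$, i.e.\ in the colax direction; whereas the two horizontal transformations $\underline T$ over $\Id$ and $M$ are the vertically lax comparisons $\phi_v$ of (1)--(2), with their original orientation $\Id_\Phi\to\Phi(\Id)$ and $\Phi\alpha/\Phi\bar\alpha\to\Phi(\alpha/\bar\alpha)$. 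I would then verify that the internal unit and associativity laws for $\underline T$ reproduce the vertical-laxity conditions (5)--(7), and that the condition making each $\underline T$ a horizontal transformation in $\Dcolax$ (compatibility with the colax structures of the $T_i$ and with the structural cells of the $\mathbb X_i,\mathbb Y_i$) reproduces (8)--(14) in their colax--lax, $\phi_h$-reversed form; conversely a colax--lax morphism reassembles into such a $T$. Finally I would note that composition on both sides is computed through the transversal structure, hence strictly associative and unitary, so the bijection respects composition; combined with Theorem~\ref{equivalence} this is the asserted correspondence.

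The hard part is not conceptual but a matter of care: one must pin down exactly which comparison cells flip under $\Dlax\leftrightarrow\Dcolax$ --- the horizontal ones $\phi_h$, which originate from ``$T_0$ colax on the vertical arrows of $\mathbb X_0$'' ($=$ the horizontal arrows of ${\ic A}$) --- and which do not --- the vertical ones $\phi_v$, originating from the internal laxity over $\Id$ and $M$ --- and correspondingly separate (5)--(7) (untouched) from (8)--(14) (flipped). Once this orientation pattern is fixed, every remaining verification is the same routine translation already performed in the proof of Theorem~\ref{equivalence}, and I would not spell these out.
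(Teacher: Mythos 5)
Your proposal is correct and follows essentially the same route as the paper, whose ``proof'' is precisely the discussion preceding the theorem statement: the reversal of $\underline F,\underline H$ for internal colax functors in $\Dlax$ (fixing $\phi_v$, flipping $\phi_h$, leaving (5)--(7) intact and reversing (8)--(14)), and the observation that the vertical arrows of ${\mathbb X}_0$ are the horizontal arrows of ${\ic A}$, so that the colax structure of the $T_i$ yields the reversed $\phi_h$ while the internal laxity cells over $\Id$ and $M$ yield the lax $\phi_v$. Your write-up is in fact somewhat more explicit than the paper's about which axioms correspond to which, but the content and decomposition are identical.
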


On the other hand, if we consider colax functors between pseudocategories in $ \Dcolax $ we get something new, a colax-colax functor of intercategories.

We also have cells
$$
\bfig

\square[{\ic A} ` {\ic B} ` {\ic C} `{\ic D}; \Phi ` \Sigma ` \Theta ` \Psi]

\place(250,250)[\scriptstyle \pi]

\efig
$$
where $ \Phi $ and $ \Psi $ are colax-lax and $ \Sigma $ and $ \Theta $ are colax-colax. For a basic cell $ \alpha $ in ${\ic A} $
$$
\pi \alpha : \Theta \Phi \alpha \to \Psi \Sigma \alpha.
$$

{\sc Remark}: The notion of a lax-colax morphism of intercategories (horizontally lax and vertically colax) doesn't come up and in fact the vertical/horizontal compatibility conditions (11)-(14) don't make sense in this case. For example, (14) would look like
$$
\bfig

\square/>`>``<-/<1000,500>[\dfrac{\Phi \alpha|\Phi \beta}{\Phi \gamma|\Phi \delta} 
  `\dfrac{\Phi(\alpha | \beta)}{\Phi(\gamma|\delta)} 
  `\left.\dfrac{\Phi \alpha}{\Phi \gamma}\right|\dfrac{\Phi\beta}{\Phi \delta} 
   `\Phi\left(\left.\dfrac{\alpha}{\gamma}\right)\right|\Phi\left(\dfrac{\beta}{\delta}\right);
\frac{\phi_h}{\phi_h}`\chi``\phi_v|\phi_v]

\square(1000,0)/<-``>`>/<1000,500>[\dfrac{\Phi(\alpha | \beta)}{\Phi(\gamma|\delta)} 
   `\Phi\left(\dfrac{\alpha|\beta}{\gamma | \delta}\right)
  `\Phi\left(\left.\dfrac{\alpha}{\gamma}\right)\right|\Phi\left(\dfrac{\beta}{\delta}\right)
   `\Phi\left(\left.\dfrac{\alpha}{\gamma}\right|\dfrac{\beta}{\delta}\right);
\phi_v ``\Phi (\chi)`\phi_h]

\efig
$$

To sum up what we have so far, there are three kinds of morphisms of intercategories, lax-lax, colax-lax, and colax-colax and there are two kinds of cells relating the lax-lax with colax-lax and the colax-lax with colax-colax. These cells give us two strict double categories of intercategories. Furthermore, these double categories have the colax-lax morphisms in common. This suggests that we might have an intercategory of intercategories with the colax-lax morphisms as transversal. In fact it is much better: we have a strict triple category.

In order to complete the construction we first have to define double cells relating lax-lax functors with colax-colax ones. Let $ \Phi $ and $ \Psi $ be lax-lax and $ \Sigma $ and $ \Theta $ colax-colax. A double cell
$$
\bfig

\square[{\ic A} ` {\ic B} `{\ic C} `{\ic D};\Phi `\Sigma `\Theta`\Psi]

\place(250,250)[\scriptstyle \pi]

\efig
$$
consists of the same data (1)-(4) as above, satisfying the conditions
$$
\bfig

\Ctriangle/<-``>/[\Id_{\Theta\Phi}`\Theta(\Id_\Phi) `\Theta\Phi(\Id);\theta_v``\Theta\phi_v]

\square(500,0)/>```>/<700,1000>[\Id_{\Theta\Phi} ` \Id_{\Psi\Sigma}` \Theta\Phi(\Id) `\Psi\Sigma(\Id);\Id_\pi ```\pi(\Id)]

\Dtriangle(1200,0)/`>`<-/[\Id_{\Psi\Sigma} `\Psi(\Id_\Sigma)`\Psi\Sigma(\Id);`\Psi_v`\Psi(\sigma_v)]

\place(900,500)[\scriptstyle (5')]

\efig
$$

$$
\bfig

\Ctriangle/<-``>/[\dfrac{\Theta \Phi \alpha}{\Theta \Phi \beta}
     `\Theta\left(\dfrac{\Phi\alpha}{\Phi\beta}\right)
     `\Theta \Phi \left(\dfrac{\alpha}{\beta}\right);
 \theta_v``\Theta\phi_v]

\square(500,0)/>```>/<1000,1000>[\dfrac{\Theta \Phi \alpha}{\Theta \Phi \beta}
    `\dfrac{\Psi\Sigma \alpha}{\Psi\Sigma \beta}
   `\Theta \Phi \left(\dfrac{\alpha}{\beta}\right)
    `\Psi \Sigma \left(\dfrac{\alpha}{\beta}\right);
   \frac{\pi \alpha}{\pi \beta}
 ```\pi\left(\frac{\alpha}{\beta}\right)]

\Dtriangle(1500,0)/`>`<-/[\dfrac{\Psi\Sigma \alpha}{\Psi\Sigma \beta}
   `\Psi\left(\dfrac{\Sigma \alpha}{\Sigma \beta}\right)
   `\Psi \Sigma \left(\dfrac{\alpha}{\beta}\right);
  `\psi_v `\Psi (\sigma_v)]

\place(1000,500)[\scriptstyle (6')]

\efig
$$
as well as conditions (7) and (8) above.

Such cells compose horizontally and vertically in the same way as the cells for the lax-lax, colax-lax case. The data is the same, the conditions (7) and (8) are the same and (5') and (6') are the same as the corresponding ones in the colax-lax, colax-colax case. In fact the conditions (5) and (6) and the conditions (7) and (8) are independent, the former referring only to the vertical structure and the latter to the horizontal. This gives us a strict double category which will comprise the basic cells of our triple category.

\begin{theorem}\label{mainth} There is a strict triple category $ {\ic ICat} $ whose objects are intercategories, with colax-lax morphisms as transversal arrows, lax-lax morphisms as horizontal arrows, colax-colax morphisms as vertical arrows, with the three kinds of double cells defined above and with commuting cubes as triple cells.

\end{theorem}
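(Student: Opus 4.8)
The plan is to realise $ {\ic ICat} $ as an internal category object in the category of strict double categories and strict double functors, internalising the transversal direction. For the double category $ \mathbb{T}_0 $ of ``objects'' one takes the strict double category constructed just above, with intercategories as objects, lax-lax morphisms as horizontal arrows, colax-colax morphisms as vertical arrows, and the double cells relating these as cells. For the double category $ \mathbb{T}_1 $ of ``morphisms'' one takes: colax-lax morphisms as objects; the double cells of $ {\mathbb P}{\rm s}{\mathbb C}{\rm at}(\Dlax) $ as horizontal arrows; the double cells of $ {\mathbb P}{\rm s}{\mathbb C}{\rm at}(\Dcolax) $ as vertical arrows (these two double categories share their objects, the colax-lax morphisms, by Theorem \ref{CLM}); and the cubes defined below as cells. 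The source and target $ \mathbb{T}_1 \rightrightarrows \mathbb{T}_0 $ read off the domain and codomain intercategory of a colax-lax morphism together with the evident boundary data of each kind of cell; the identity $ \mathbb{T}_0 \to \mathbb{T}_1 $ inserts identity colax-lax morphisms and identity cells; composition $ \mathbb{T}_1 \times_{\mathbb{T}_0}\mathbb{T}_1 \to \mathbb{T}_1 $ is transversal composition, of colax-lax morphisms (Theorems \ref{PC} and \ref{CLM}), of the two kinds of cells, and of cubes. Since all these transversal operations are built pointwise from the transversal composition inside the target intercategory, which is strictly associative and unitary, the simplicial identities hold on the nose; and since the label ``lax-lax / colax-lax / colax-colax'' plays no role in transversal composition, the identifications among the various double categories already in hand are automatically compatible with it. One should also note that $ \mathbb{T}_0 $ shares its horizontal arrows with $ {\mathbb P}{\rm s}{\mathbb C}{\rm at}(\Dlax) $ and its vertical arrows with $ {\mathbb P}{\rm s}{\mathbb C}{\rm at}(\Dcolax) $, so the three faces really do glue.

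The one genuinely new ingredient is the notion of cube and its three compositions. A cube $ \Pi $ has twelve edges — four lax-lax, four colax-lax, four colax-colax — arranged as the edges of a $ 3 $-cube, and six faces: the two faces transverse to the vertical direction are double cells of $ {\mathbb P}{\rm s}{\mathbb C}{\rm at}(\Dlax) $, the two transverse to the horizontal direction are double cells of $ {\mathbb P}{\rm s}{\mathbb C}{\rm at}(\Dcolax) $, and the two transverse to the transversal direction are cells of $ \mathbb{T}_0 $; all twelve boundary matchings are imposed. The commutativity condition is the equality of the two ways of pasting these six faces. The first thing I would check is that this equation is \emph{well-posed}: using the explicit descriptions of the data of each kind of cell, one verifies that the two composite double cells being equated have the same source and the same target, so that no ``lax-colax'' diagram — which, as observed in Section \ref{mor}, fails to make sense because the relevant comparison cells do not compose — is ever demanded. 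This is the point at which the directedness of the interchangers $ \chi, \delta, \mu, \tau $, and hence the asymmetry of the three kinds of morphism, is essential; I expect it to be the main obstacle.

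Next one defines the three compositions of cubes. Transversal composition of cubes is the transversal composition of the underlying families of transversal cells in the target, exactly as for the cells of Theorem \ref{PC}. Horizontal composition is induced face by face by the horizontal composition of double cells in $ {\mathbb P}{\rm s}{\mathbb C}{\rm at}(\Dlax) $ and in $ \mathbb{T}_0 $, and vertical composition similarly by the vertical composition of double cells in $ {\mathbb P}{\rm s}{\mathbb C}{\rm at}(\Dcolax) $ and in $ \mathbb{T}_0 $. In each case one checks that the pasted configuration is again a commuting cube: the commutativity condition is an equality of pastings of double cells, and those double cells compose in the three strict double categories already constructed, so the composite equation follows by pasting the two given ones — this uses only that horizontal and vertical composition of double cells is functorial, which is part of Theorem \ref{PC} and of the construction of $ \mathbb{T}_0 $. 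In particular $ \mathbb{T}_1 $ is thereby shown to be a strict double category.

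Finally, the triple category axioms. Each of the three compositions of cubes is assembled from the transversal composition of cells in the target intercategory and from the horizontal and vertical compositions in $ {\mathbb P}{\rm s}{\mathbb C}{\rm at}(\Dlax) $, $ {\mathbb P}{\rm s}{\mathbb C}{\rm at}(\Dcolax) $ and $ \mathbb{T}_0 $. Associativity and the unit laws for each are therefore inherited from the strictness of those operations, through the strictness of transversal composition in intercategories. For interchange: any two of the three directions span one of the three strict double categories — horizontal with transversal gives $ {\mathbb P}{\rm s}{\mathbb C}{\rm at}(\Dlax) $, transversal with vertical gives $ {\mathbb P}{\rm s}{\mathbb C}{\rm at}(\Dcolax) $, horizontal with vertical gives $ \mathbb{T}_0 $ — so each of the three interchange laws already holds strictly there and hence for cubes. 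Putting this together shows that $ \mathbb{T}_1 \rightrightarrows \mathbb{T}_0 $ with the structure above is a category object in strict double categories, i.e.\ $ {\ic ICat} $ is a strict triple category.
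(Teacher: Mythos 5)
Your proposal is correct and follows essentially the same route as the paper: a triple cell is a six-face boundary configuration (two faces in each of $ {\mathbb P}{\rm s}{\mathbb C}{\rm at}(\Dlax) $, $ {\mathbb P}{\rm s}{\mathbb C}{\rm at}(\Dcolax) $ and the lax-lax/colax-colax double category) subject to the equality of the two pastings around the cube, which is exactly the paper's commuting hexagon $ (\ov{\kappa}\Sigma\alpha)(\Psi'\lambda\alpha)(\pi'K\alpha)(\Theta'\kappa\alpha) = (L'\pi\alpha)(\ov{\lambda}\Phi\alpha) $, and the remaining work reduces, as in the paper, to closure of commuting cubes under the three pastings, which the paper dismisses as ``a straightforward, if long, calculation which we omit'' and which you sketch at a comparable level of detail. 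Your packaging as a category object in strict double categories is just the paper's own definition of strict triple category made explicit, so it does not constitute a genuinely different argument.
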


\begin{proof} We must say what is meant by a commuting cube. Consider a cube of cells with back and front as shown
$$
\bfig

\square(0,300)[{\ic A} `{\ic B} `{\ic C} `{\ic D}; \Phi `\Sigma `\Theta `\Psi]

\square(300,0)/``>`>/[`{\ic B'}`{\ic C'}`{\ic D'};` `\Theta' `\Psi']

\morphism(500,800)<300,-300>[{\ic B} `{\ic B'};K']

\morphism(0,300)|b|<300,-300>[{\ic C}`{\ic C'};L]

\morphism(500,300)<300,-300>[{\ic D} `{\ic D'};L']

\place(250,550)[\scriptstyle \pi]

\place(400,150)[\scriptstyle \ov{\kappa}]

\place(650,400)[\scriptstyle \ov{\lambda}]

\square(1400,300)/>`>``/[{\ic A} ` {\ic B} `{\ic C} `;\Phi`\Sigma ` `]

\square(1700,0)[{\ic A'} ` {\ic B'} `{\ic C'} `{\ic D'}; \Phi' ` \Sigma' `\Theta' `\Psi']

\morphism(1400,800)|b|/>/<300,-300>[{\ic A}`{\ic A'};K]

\morphism(1900,800)/>/<300,-300>[{\ic B} ` {\ic B'};K']

\morphism(1400,300)|b|/>/<300,-300>[{\ic C}`{\ic C'};L]

\place(1950,250)[\scriptstyle \pi']

\place(1550,400)[\scriptstyle \lambda]

\place(1800,650)[\scriptstyle \kappa]

\efig
$$
The data for the cells is given by
$$
\pi \alpha : \Theta \Phi \alpha \to \Psi \Sigma \alpha,
$$
$$
\pi' \alpha' : \Theta' \Phi' \alpha' \to \Psi' \Sigma' \alpha',
$$
$$
\kappa \alpha : K' \Phi \alpha \to \Phi' K \alpha,
$$
$$
\ov{\kappa} \gamma : L' \Psi \gamma \to \Psi' L \gamma,
$$
$$
\lambda \alpha : \Sigma' K \alpha \to L \Sigma \alpha,
$$
$$
\ov{\lambda} \beta : \Theta' K' \beta \to L' \Theta \beta.
$$
Commutativity means that for every $ \alpha $
$$
\bfig

\Ctriangle/<-``>/[\Theta' \Phi' K \alpha`\Theta' K' \Phi \alpha`L' \Theta \Phi \alpha;\Theta \kappa \alpha``\ov{\lambda} \Phi\alpha]

\square(500,0)/>```>/<700,1000>[\Theta' \Phi' K \alpha` \Psi' \Sigma' K \alpha` L' \Theta \Phi \alpha`L' \Psi \Sigma\alpha;\pi' K \alpha ```L'\pi\alpha]

\Dtriangle(1200,0)/`>`<-/[ \Psi' \Sigma' K \alpha`\Psi' L \Sigma \alpha`L' \Psi \Sigma\alpha;`\Psi'\lambda \alpha`\ov{\kappa}\Sigma\alpha]

\efig
$$
commutes. That commuting cubes paste in all three directions is a straightforward, if long, calculation which we omit.

\end{proof}


\section{Pseudocategories in a double category}\label{mystery}

Theorem \ref{equivalence} asserts that double pseudocategories in $ \CAT $, horizontal intercategories and vertical intercategories are the same but it later turns out that the morphisms are not. Double pseudocategories have three variants of morphism, lax-lax, colax-lax and colax-colax whereas horizontal (resp.\ vertical) only have two variants of morphism, lax-lax and colax-lax (resp.\ colax-lax and colax-colax). We should then conclude that categorically the structures are not the same: if they were we should be able to recover the third variant. If we concentrate on the presentation in $ \Dlax $, the reason for this discrepancy is that this $2$-category is only the horizontal part of $ \Doub $, while the third kind of morphism is a double category construct.

We want thus to define a horizontal intercategory {\em in the double category} $\Doub $. First, we need one more concept.

\begin{definition} A cell
$$
\bfig

\square/>`@{>}|{\bb}`@{>}|{\bb}`>/[{\mathbb A} `{\mathbb B} `{\mathbb C} `{\mathbb D};F `U `V`G]

\place(250,250)[\scriptstyle \sigma]

\efig
$$
in $ \Doub $ is ({\em horizontally}) {\em strict} if $ F $ and $ G $ are strict functors and $\sigma $ is a horizontal identity on objects and vertical arrows of $ {\mathbb A} $, i.e.\ $VF = GU$.

\end{definition}

\begin{proposition} The horizontal (resp.\ vertical) composite of strict cells is again strict.

\end{proposition}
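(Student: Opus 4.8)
The plan is to unwind the definition of a (horizontally) strict cell and then read off the claim from the formulas for horizontal and vertical composition of cells in $\Doub$ recorded in \cite{GP}. Recall that such a cell $\sigma$ assigns to each object $A$ of $\mathbb{A}$ a horizontal arrow $\sigma A\colon VFA\to GUA$ and to each vertical arrow $u$ of $\mathbb{A}$ a double cell $\sigma u$; and that $\sigma$ is strict precisely when (i) $F$ and $G$ are strict functors, (ii) $VF=GU$ (so that the $\sigma A$ and $\sigma u$ have matching domain and codomain), and (iii) every $\sigma A$ and every $\sigma u$ is a horizontal identity. For a composite cell there are then exactly three things to verify: that its two boundary horizontal functors are strict, that the analogue of $VF=GU$ holds for its boundary, and that all its components are horizontal identities.

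For horizontal composition, let $\sigma$ have boundary $(F,U,V,G)$ and $\sigma'$ boundary $(F',V,V',G')$, so the horizontal composite $\sigma;\sigma'$ has boundary $(F'F,\,U,\,V',\,G'G)$, and by the composition formula of \cite{GP} its component at an object $A$ is the horizontal composite $V'F'FA\xrightarrow{\ \sigma'(FA)\ }G'VFA\xrightarrow{\ G'(\sigma A)\ }G'GUA$. When $\sigma$ and $\sigma'$ are strict we have $\sigma'(FA)=\id_{V'F'FA}$, and $G'(\sigma A)=G'(\id_{VFA})=\id_{G'VFA}$ because $G'$, being a (lax, a fortiori strict) functor, is functorial in the horizontal direction and so preserves horizontal identities; since $V'F'=G'V$ these two identities sit on the same object, and since $VF=GU$ their composite is $\id_{(G'G)UA}$. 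The component at a vertical arrow $u$ is handled identically, being a horizontal paste of two horizontal identity cells. Finally $F'F$ and $G'G$ are composites of strict functors and hence strict, and $V'(F'F)=(V'F')F=(G'V)F=G'(VF)=G'(GU)=(G'G)U$, so $\sigma;\sigma'$ is strict.

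For vertical composition the argument is the mirror image: if $\sigma$ has boundary $(F,U,V,G)$ and $\tau$ has boundary $(G,U',V',H)$, the vertical composite has boundary $(F,\,U'U,\,V'V,\,H)$ and component at $A$ equal to $V'VFA\xrightarrow{\ V'(\sigma A)\ }V'GUA\xrightarrow{\ \tau(UA)\ }HU'UA$; strictness of $\sigma$ and $\tau$ together with the horizontal functoriality of $V'$ makes both factors horizontal identities on the same object (now using $VF=GU$ and $V'G=HU'$), their composite being $\id_{(V'V)FA}$, and likewise on vertical arrows, while $F$ and $H$ are already strict and $V'VF=V'GU=(V'G)U=(HU')U=HU'U$ gives the required boundary equation. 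I do not expect any genuine obstacle: the one structural input used throughout is that a lax functor is strictly functorial in the horizontal direction and hence preserves horizontal identity arrows and cells, and everything else is domain/codomain bookkeeping. Preservation of associativity, unit laws and interchange for these composites is inherited from $\Doub$ itself and needs nothing new.
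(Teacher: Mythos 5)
Your proof is correct; the paper states this proposition without any proof, treating it as immediate from the definitions, and your verification is exactly the routine check one would write out: the boundary lax functors of a composite are composites of strict functors, the boundary equation $VF=GU$ chains through, and the components of the composite cell are horizontal pastes of horizontal identities, which collapse because every lax (and colax) functor is strictly functorial in the horizontal direction. The only blemish is the parenthetical ``(lax, a fortiori strict)'', which reads backwards---strictness implies laxity, not conversely---but nothing in the argument depends on it, since horizontal functoriality is all you use.
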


\begin{proposition} Let
$$
\bfig

\square/>`@{>}|{\bb}`@{>}|{\bb}`>/[{\mathbb A} `{\mathbb B} `{\mathbb C} `{\mathbb D};F `U `V `G]

\place(250,250)[\scriptstyle \sigma]

\place(900,250)[\mbox{\ and\ }]

\square(1300,0)/>`@{>}|{\bb}`@{>}|{\bb}`>/[{\mathbb A'} `{\mathbb B} `{\mathbb C'} `{\mathbb D'};F' `U' `V `G']

\place(1550,250)[\scriptstyle \sigma']

\efig
$$
be strict cells in $ \Doub $. Then there exist unique $ U \times_V U' : {\mathbb A} \times_{\mathbb B} {\mathbb A'} \tod {\mathbb C} \times_{\mathbb D} {\mathbb C'} $ and strict cells
$$
\bfig

\square/>`@{>}|{\bb}`@{>}|{\bb}`>/[{\mathbb A} \times_{\mathbb B} {\mathbb A'}  `{\mathbb A}
   `{\mathbb C} \times_{\mathbb D}{\mathbb C'}   `{\mathbb C};
    p_1  `U \times_V U' `U `q_1]

\place(250,250)[\scriptstyle \pi_1]

\place(900,250)[\mbox{\ and\ }]

\square(1500,0)/>`@{>}|{\bb}`@{>}|{\bb}`>/[{\mathbb A} \times_{\mathbb B} {\mathbb A'}  `{\mathbb A'}
   `{\mathbb C} \times_{\mathbb D}{\mathbb C'}   `{\mathbb C'};
    p_2  `U \times_V U' `U' `q_2]

\place(1750,250)[\scriptstyle \pi_2]

\efig
$$

Further
$$
\bfig

\square[U \times_V U' `U' `U `V;\pi' `\pi `\sigma' `\sigma]

\efig
$$
is a pullback in $ {\bf Dbl}_1 $, the category whose objects are colax functors and whose morphisms are arbitrary cells in $ \Doub $.

\end{proposition}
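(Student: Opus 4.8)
The plan is to construct $U \times_V U'$ and the cells $\pi_1$, $\pi_2$ explicitly, component by component, using the set-theoretic pullbacks of double categories from Proposition~1, and then to verify the claimed universal property by a direct diagram chase. First I would recall that since $\sigma$ and $\sigma'$ are horizontally strict, we have $VF = GU$ and $VF' = G'U'$, so $F, F', G, G'$ are strict functors and the set-theoretic pullbacks $\mathbb{A} \times_{\mathbb B} \mathbb{A'}$ and $\mathbb{C} \times_{\mathbb D} \mathbb{C'}$ are well-defined weak double categories by Proposition~1, with strict projections $p_1, p_2, q_1, q_2$. An object (resp.\ arrow, cell) of $\mathbb{A} \times_{\mathbb B} \mathbb{A'}$ is a pair $(a, a')$ with $Fa = F'a'$ in $\mathbb{B}$; similarly for $\mathbb{C} \times_{\mathbb D} \mathbb{C'}$. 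For a vertical arrow $(a,a')$ of $\mathbb{A} \times_{\mathbb B} \mathbb{A'}$, I would set $(U \times_V U')(a,a') := (Ua, U'a')$; this is a legitimate vertical arrow of $\mathbb{C}\times_{\mathbb D}\mathbb{C'}$ precisely because $GUa = VFa = VF'a' = G'U'a'$. On objects one puts $(A,A') \mapsto (UA, U'A')$, which makes sense for the same reason ($VFA = GUA$ and $VF'A' = G'U'A'$, with $FA = F'A'$ forcing $GUA = G'U'A'$). Uniqueness of $U \times_V U'$ is immediate: any colax functor into the pullback $\mathbb{C}\times_{\mathbb D}\mathbb{C'}$ is determined by its two composites with $q_1, q_2$, and these are forced to be $U p_1$ and $U' p_2$ by the requirement that $\pi_1, \pi_2$ be strict (which demands $q_1 (U\times_V U') = U p_1$ and $q_2 (U \times_V U') = U' p_2$ on objects and vertical arrows).

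Next I would define the strict cells $\pi_1$ and $\pi_2$. Since a horizontally strict cell in $\Doub$ is nothing more than the assertion that the two composite colax functors around its boundary agree on objects and vertical arrows, $\pi_1$ is just the identity witnessing $q_1 \circ (U\times_V U') = U \circ p_1$ and $\pi_2$ the identity witnessing $q_2 \circ (U \times_V U') = U' \circ p_2$; both equalities hold by construction. Strictness of $p_1, q_1, p_2, q_2$ (Proposition~1) gives that $\pi_1, \pi_2$ are indeed strict cells in the sense of the preceding Definition. I would then note, invoking the earlier Proposition that horizontal composites of strict cells are strict, that the pasting $\sigma \circ \pi_1$ equals $\sigma' \circ \pi_2$ as cells $(U\times_V U') \Rightarrow V$ over the boundary with top $F p_1 = F' p_2$ — this is the commutativity making the square in the statement a well-posed candidate pullback in $\mathbf{Dbl}_1$.

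Finally, for the universal property in $\mathbf{Dbl}_1$ — the category with colax functors as objects and arbitrary cells of $\Doub$ as morphisms — I would take a colax functor $W : \mathbb{E} \tod \mathbb{F}$ together with cells $\rho : W \Rightarrow U$ (over some strict $H : \mathbb{E} \to \mathbb{A}$, $K : \mathbb{F}\to\mathbb{C}$) and $\rho' : W \Rightarrow U'$ (over $H', K'$) such that $\sigma\circ\rho = \sigma'\circ\rho'$. The common composite has top component landing in $\mathbb{B}$ with $FH = F'H'$ and $GK = G'K'$, so $(H, H') : \mathbb{E}\to\mathbb{A}\times_{\mathbb B}\mathbb{A'}$ and $(K, K'):\mathbb{F}\to\mathbb{C}\times_{\mathbb D}\mathbb{C'}$ are the induced strict functors, and the pair of cells $(\rho, \rho')$ — which on each object and vertical arrow of $\mathbb{E}$ is a horizontal arrow, resp.\ double cell, of $\mathbb{C}$ and of $\mathbb{C'}$ agreeing in $\mathbb{D}$ — assembles into a unique cell $\langle\rho,\rho'\rangle : W \Rightarrow U\times_V U'$ over $(H,H')$, $(K,K')$, with $\pi_1\circ\langle\rho,\rho'\rangle = \rho$ and $\pi_2\circ\langle\rho,\rho'\rangle = \rho'$. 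Uniqueness follows from the jointly monic behaviour of $q_1, q_2$ together with strictness of $\pi_1,\pi_2$. The one point requiring genuine (though routine) care — and the step I expect to be the main obstacle — is checking that $\langle\rho,\rho'\rangle$ really is a cell of $\Doub$, i.e.\ that the componentwise pair of a horizontal $\mathbb{C}$-cell and a horizontal $\mathbb{C'}$-cell, glued over $\mathbb{D}$, satisfies the compatibility axioms of a $\Doub$-cell from \cite{GP}; this reduces to the corresponding axioms for $\rho$ and $\rho'$ separately, since all the structure of $\mathbb{C}\times_{\mathbb D}\mathbb{C'}$ (composition, units, the comparison cells of the colax functor $U\times_V U'$, and the structural isomorphisms $\kappa,\lambda,\rho$) is computed componentwise. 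I would spell out one or two of these verifications and leave the rest to the reader.
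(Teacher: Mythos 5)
Your proposal is correct and follows essentially the same route as the paper's own (very brief) sketch: strictness of $\pi_1,\pi_2$ forces $U\times_V U'$ to be defined componentwise at the set-theoretic level, including its colaxity cells, and the universal property is then a matter of writing it out. One minor point: when testing the pullback property in ${\bf Dbl}_1$ the horizontal boundaries $H,K$ of a competing cell are arbitrary lax functors rather than strict ones as your parenthetical suggests, but the induced $(H,H')$ still exists by the $2$-pullback property of Proposition 1, so your argument goes through unchanged.
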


\begin{proof} (Sketch) As $ \pi_1 $ and $ \pi_2 $ are to be strict, $ U \times_V U' $ has to be given by
$$
U \times_V U' (A, A') = (UA, U'A')
$$
with similar definitions for arrows, cells, and the colaxity structural cells, i.e.~everything happens at the set-theoretical level. The universal property is also like that: it is simply a question of writing it out.

\end{proof}

Let $ SC(\Doub) $ be the double category of strict horizontal cospans in $ \Doub $. The objects are cospans
$$
{\mathbb A} \to^{F} {\mathbb B} \to/<-/^{\ \ F'} {\mathbb A'}
$$
of strict functors (considered as horizontal morphisms of $ \Doub $). The horizontal morphisms are commutative diagrams
$$
\bfig

\square[{\mathbb A}_1  `{\mathbb B}_1 `{\mathbb A}_2 `{\mathbb B}_2;F_1 `G `H `F'_2]

\square(500,0)/<-``>`<-/[{\mathbb B}_1 `{\mathbb A'}_1 `{\mathbb B}_2 `{\mathbb A'}_2;F'_1 ``G'`F'_2]

\efig
$$
of horizontal morphisms of $ \Doub $, i.e.~$G, H, G' $ are lax functors. The vertical morphisms are strict cells
$$
\bfig

\square/>`@{>}|{\bb}`@{>}|{\bb}`>/[{\mathbb A} `{\mathbb B}`{\mathbb C}`{\mathbb D};F`U`V`G]

\place(250,250)[\scriptstyle \sigma]

\square(500,0)/<-`@{>}|{\bb}`@{>}|{\bb}`<-/[{\mathbb B}`{\mathbb A'} `{\mathbb D}`{\mathbb C'};F'``U'`G']

\place(750,250)[\scriptstyle \sigma']

\efig
$$
i.e.~$U, V, U' $ are colax functors and the diagram commutes. A cell consists of three arbitrary cells
$$
\bfig

\square/>`@{>}|{\bb}`@{>}|{\bb}`>/[{\mathbb A}_1 `{\mathbb A}_2 `{\mathbb C}_1 `{\mathbb C}_2;G`U_1`U_2`K]

\place(250,250)[\scriptstyle \alpha]

\square(1000,0)/>`@{>}|{\bb}`@{>}|{\bb}`>/[{\mathbb B}_1 `{\mathbb B}_2 `{\mathbb D}_1 `{\mathbb D}_2;H`V_1`V_2`L]

\place(1250,250)[\scriptstyle \beta]

\square(2000,0)/>`@{>}|{\bb}`@{>}|{\bb}`>/[{\mathbb A'}_1  `{\mathbb A'}_2  `{\mathbb C'}_1  `{\mathbb C'}_2;G'`U'_1 `U'_2 `K']

\place(2250,250)[\scriptstyle \alpha']

\efig
$$
making
$$
\bfig

\square(0,300)/>`@{>}|{\bb}``/[{\mathbb A}_1 `{\mathbb B}_1`{\mathbb C}_1`;`U_1``]

\square(300,0)|arrb|/>`@{>}|{\bb}`@{>}|{\bb}`>/[{\mathbb A}_2 `{\mathbb B}_2 `{\mathbb C}_2 `{\mathbb D}_2;`U_2`V_2`]

\morphism(0,800)|b|<300,-300>[{\mathbb A}_1`{\mathbb A}_2;]

\morphism(500,800)<300,-300>[{\mathbb B}_1`{\mathbb B}_2;]

\morphism(0,300)|b|<300,-300>[{\mathbb C}_1`{\mathbb C}_2;]

\square(800,0)/<-`@{>}|{\bb}`@{>}|{\bb}`<-/[{\mathbb B}_2`{\mathbb A'}_2`{\mathbb D}_2`{\mathbb C'}_2;``U'_2`]

\morphism(1000,800)/>/<300,-300>[{\mathbb A'}_1`{\mathbb A'}_2;]

\morphism(970,790)/-/<0,-250>[`;]

\place(300,700)[\scriptstyle \sigma_1]

\place(800,700)[\scriptstyle \sigma'_1]

\place(600,600)[\scriptstyle \beta]

\place(1100,600)[\scriptstyle \alpha']

\place(150,400)[\scriptstyle \alpha]

\place(570,250)[\scriptstyle \sigma_2]

\place(1100,250)[\scriptstyle \sigma'_2]

\morphism(500,800)/>/<300,-300>[{\mathbb B}_1`{\mathbb B}_2;]

\morphism(470,790)/-/<0,-250>[`;]

\morphism(500,800)/<-/<500,0>[{\mathbb B}_1`{\mathbb A'}_1;]




\efig
$$
commute, i.e.~$\sigma_2 \alpha = \beta \sigma_1 $ and $ \sigma'_2 \alpha' = \beta \sigma'_1$.

The following result follows immediately from the description given above. We use the same convention regarding subscripts as in Section \ref{pscat}.

\begin{proposition} Pullback extends to a strict functor of double categories $ SC(\Doub) \to \  \Doub $.

\end{proposition}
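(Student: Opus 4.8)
The plan is to define the pullback functor $P\colon SC(\Doub)\to\Doub$ one level at a time, taking honest set-theoretical pullbacks throughout, and then to observe that, since every piece of output data is produced by applying the given lax or colax functors and cells componentwise and pairing the results, strictness of $P$ is forced once each level has been checked to be well defined.

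First I would fix $P$ on objects: a strict cospan $ {\mathbb A}\to{\mathbb B}\leftarrow{\mathbb A'} $ is sent to the set-theoretical pullback $ {\mathbb A}\times_{\mathbb B}{\mathbb A'} $, which carries a weak double category structure by Proposition 1 (we need only its underlying double category, not the $2$-universal property). On a horizontal morphism of $SC(\Doub)$, namely a commutative square with lax components $G$, $H$, $G'$, I would take the lax functor $G\times_H G'$ acting by $(A,A')\mapsto(GA,G'A')$ and carrying structural cells $(\phi^G,\phi^{G'})$; this is legitimate because $F_2$ and $F'_2$ are strict and the square commutes, so the two components of each structural cell become equal after $F_2$ and after $F'_2$ and hence form a bona fide cell of the pullback, while the lax-functor axioms hold componentwise. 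On a vertical morphism, namely a pair of strict cells $\sigma$, $\sigma'$ with colax components $U$, $V$, $U'$, I would take the colax functor $U\times_V U'$ produced by Proposition 3. Finally, on a cell $(\alpha,\beta,\alpha')$, subject to $\sigma_2\alpha=\beta\sigma_1$ and $\sigma'_2\alpha'=\beta\sigma'_1$, I would take the cell $(\alpha,\alpha')$ of $\Doub$ between $G\times_H G'$ and $K\times_L K'$ over the two induced colax functors; the two commutativity equations are exactly what is needed for $(\alpha,\alpha')$ to descend to the pullback double categories, its two lower components being identified once pushed into $ {\mathbb D} $ by $\beta$.

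With $P$ so defined, functoriality is immediate. Horizontal composition of horizontal morphisms is componentwise composition of the lax components, and the lax functor induced by a composite square is, on the nose, the composite of the induced lax functors, both sides acting by $(A,A')\mapsto(G_2G_1A,G'_2G'_1A')$ and pairing the componentwise structural cells. Vertical composition of vertical morphisms composes strictly since, by the proof of Proposition 3, the colax functor $U\times_V U'$ is defined at the set level, $(A,A')\mapsto(UA,U'A')$, with corresponding componentwise definitions on arrows, cells and colaxity cells. Both compositions of cells are preserved because they are componentwise pasting of cells of $\Doub$ and pairing commutes with pasting, and identities go to identities for the same formal reason. The one step that takes any care, and which I expect to be the main --- though minor --- obstacle, is the well-definedness at the two middle levels: that the componentwise data really do assemble into a lax (resp.\ colax) functor of the pullbacks and into a cell of $\Doub$. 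But this is handled by Propositions 1 and 3, the former describing the pullback double category componentwise and the latter treating the colax side in full, colaxity cells and the pullback property in $ {\bf Dbl}_1 $ included. The remaining checks are routine bookkeeping with domains and codomains, which I would omit; thus the result follows at once from the descriptions already given.
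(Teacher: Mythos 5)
Your proposal is correct and follows exactly the route the paper intends: the paper offers no written proof, stating only that the result ``follows immediately from the description given above,'' and your componentwise construction --- objects via Proposition 1, vertical morphisms via Proposition 3, horizontal morphisms and cells by pairing, with strict functoriality forced by everything being set-theoretical --- is precisely that description spelled out. The one point worth keeping explicit is the one you already flag: commutativity of the horizontal squares must be read as equality of composite \emph{lax} functors (so the laxity cells agree after $F_2$ and $F'_2$), which is what makes the paired structural cells legitimate cells of the pullback.
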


\begin{definition} Let $ {\ic A} = {\mathbb A}_2\  \threepppp/>`>`>/<400>^{} |{} _{} \ {\mathbb A}_1 \ \three/>`<-`>/^{} | {}_{} \ {\mathbb A}_0 $ and $ {\ic B} = {\mathbb B}_2\  \threepppp/>`>`>/<400>^{} |{} _{} \ {\mathbb B}_1 \ \three/>`<-`>/^{} | {}_{} \ {\mathbb B}_0 $ be horizontal intercategories. A {\em vertical morphism} $ {\ic U} : {\ic A} \to {\ic B} $ consists of

\noindent (1) vertical arrows (colax functors) $ U_i : {\mathbb A}_i \tod {\mathbb B}_i $

\noindent (2) strict cells
$$
\bfig

\square/>`@{>}|{\bb}`@{>}|{\bb}`>/[{\mathbb A}_1 `{\mathbb A}_0 `{\mathbb B}_1 `{\mathbb B}_0;\partial_i `U_1 `U_0 `\partial_i]

\place(250,250)[\scriptstyle \delta_i]

\square(1000,0)/>`@{>}|{\bb}`@{>}|{\bb}`>/[{\mathbb A}_2 `{\mathbb A}_1 `{\mathbb B}_2 `{\mathbb B}_1;p_i `U_2 `U_1 `p_i]

\place(1250,250)[\scriptstyle \pi_i]

\efig
$$
and

\noindent (3) arbitrary cells
$$
\bfig

\square/>`@{>}|{\bb}`@{>}|{\bb}`>/[{\mathbb A}_0 `{\mathbb A}_1 `{\mathbb B}_0 `{\mathbb B}_1;\id `U_0`U_1`\id]

\place(250,250)[\scriptstyle \eta]

\square(1000,0)/>`@{>}|{\bb}`@{>}|{\bb}`>/[{\mathbb A}_2 `{\mathbb A}_1 `{\mathbb B}_2 `{\mathbb B}_1;m`U_2`U_1`m]

\place(1250,250)[\scriptstyle \mu]

\efig
$$
satisfying

$$
\bfig\scalefactor{.8}
\Vtriangle/`>`<-/<500,300>[{\mathbb B}_3 `{\mathbb B}_1`{\mathbb B}_2;`m_{12}`m]
\Vtriangle(0,700)/`>`<-/<500,300>[{\mathbb A}_3`{\mathbb A}_1`{\mathbb A}_2;`m_{12}`m]
\Atriangle(0,1000)/<-`>`/<500,300>[{\mathbb A}_2`{\mathbb A}_3`{\mathbb A}_1;m_{23}`m`]
\morphism(0,1000)|l|/@{>}|{\bb}/<0,-700>[{\mathbb A}_3`{\mathbb B}_3;U_3]
\morphism(500,700)|r|/@{>}|{\bb}/<0,-700>[{\mathbb A}_2`{\mathbb B}_2;U_2]
\morphism(1000,1000)|r|/@{>}|{\bb}/<0,-700>[{\mathbb A}_1`{\mathbb B}_1;U_1]

\place(1500,650)[=]

\Vtriangle(2000,0)/`>`<-/<500,300>[{\mathbb B}_3 `{\mathbb B}_1`{\mathbb B}_2;``]
\Atriangle(2000,300)/<-`>`/<500,300>[{\mathbb B}_2`{\mathbb B}_3`{\mathbb B}_1;``]
\Atriangle(2000,1000)/<-`>`/<500,300>[{\mathbb A}_2`{\mathbb A}_3`{\mathbb A}_1;m_{23}`m`]
\morphism(2000,1000)|l|/@{>}|{\bb}/<0,-700>[{\mathbb A}_3`{\mathbb B}_3;U_3]
\morphism(2500,1300)|r|/@{>}|{\bb}/<0,-700>[{\mathbb A}_2`{\mathbb B}_2;U_2]
\morphism(3000,1000)|r|/@{>}|{\bb}/<0,-700>[{\mathbb A}_1`{\mathbb B}_1;U_1]

\place(250,500)[\scriptstyle \mu_{12}]
\place(750,500)[\scriptstyle \mu]
\morphism(500,1150)|r|/=>/<0,-200>[`;\kappa]

\place(2250,800)[\scriptstyle \mu_{23}]
\place(2750,800)[\scriptstyle \mu]
\morphism(2500,400)|r|/=>/<0,-200>[`;\kappa]

\place(-700,650)[\ \ ]
\place(3700,650)[(4)]

\efig
$$

$$
\bfig\scalefactor{.8}

\Atriangle/<-`>`=/<500,350>[{\mathbb B}_2 `{\mathbb B}_1 `{\mathbb B}_1;\id_1`m`]

\morphism(500,250)|r|/=>/<0,-175>[`;\scriptstyle \lambda]

\Atriangle(0,700)/<-`>`/<500,350>[{\mathbb A}_2`{\mathbb A}_1`{\mathbb A}_1;\id_1`m`]

\morphism(0,700)|l|/@{>}|{\bb}/<0,-700>[{\mathbb A}_1 `{\mathbb B}_1;U_1]

\morphism(1000,700)|r|/@{>}|{\bb}/<0,-700>[{\mathbb A}_1 `{\mathbb B}_1;U_1]

\morphism(500,1050)|r|/@{>}|{\bb}/<0,-700>[{\mathbb A}_2`{\mathbb B}_2;U_2]

\place(250,550)[\scriptstyle \eta_1]

\place(750,550)[\scriptstyle \mu]

\place(1500,500)[=]

\square(2000,0)/=`@{>}|{\bb}`@{>}|{\bb}`=/<1000,700>[{\mathbb A}_1 `{\mathbb A}_1`{\mathbb B}_1 `{\mathbb B}_1;`U_1`U_1`]

\Atriangle(2000,700)/<-`>`=/<500,350>[{\mathbb A}_2`{\mathbb A}_1`{\mathbb A}_1;\id_1`m`]

\morphism(2500,950)|r|/=>/<0,-175>[`;\scriptstyle \lambda]

\place(2500,350)[\scriptstyle 1_{U_1}]

\place(-700,650)[\ \ ]
\place(3700,650)[(5)]
\efig
$$

$$
\bfig\scalefactor{.8}

\Atriangle/<-`>`=/<500,350>[{\mathbb B}_2 `{\mathbb B}_1 `{\mathbb B}_1;\id_2`n`]

\morphism(500,250)|r|/=>/<0,-175>[`;\scriptstyle \ \rho]

\Atriangle(0,700)/<-`>`/<500,350>[{\mathbb A}_2`{\mathbb A}_1`{\mathbb A}_2;\id_2`m`]

\morphism(0,700)|l|/@{>}|{\bb}/<0,-700>[{\mathbb A}_1 `{\mathbb B}_1;U_1]

\morphism(1000,700)|r|/@{>}|{\bb}/<0,-700>[{\mathbb A}_1 `{\mathbb B}_1;U_1]

\morphism(500,1050)|r|/@{>}|{\bb}/<0,-700>[{\mathbb A}_2`{\mathbb B}_2;U_2]

\place(250,550)[\scriptstyle \eta_2]

\place(750,550)[\scriptstyle \mu]

\place(1500,500)[=]

\square(2000,0)/=`@{>}|{\bb}`@{>}|{\bb}`=/<1000,700>[{\mathbb A}_1 `{\mathbb A}_1`{\mathbb B}_1 `{\mathbb B}_1;`U_1`U_1`]

\Atriangle(2000,700)/<-`>`=/<500,350>[{\mathbb A}_2`{\mathbb A}_1`{\mathbb A}_1;\id_2`m`]


\place(2500,350)[\scriptstyle 1_{U_1}]

\place(-700,650)[\ \ ]
\place(3700,650)[(6)]
\efig
$$
\end{definition}

\begin{proposition} A vertical morphism of horizontal intercategories is the same as a colax-colax morphism of double pseudocategories.

\end{proposition}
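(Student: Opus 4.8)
The plan is to unwind both notions into elementary data indexed by the three levels $i=0,1,2$ and check they coincide term by term, which is the same kind of ``bookkeeping'' argument used in the proof of Theorem~\ref{equivalence}. Recall that a colax-colax morphism of intercategories, by the discussion in Section~\ref{mor}, is an internal colax functor between pseudocategories in $\Dcolax$; dually, by the identification of horizontal with vertical intercategories (Theorem~\ref{equivalence}), it can also be read off directly from the $3\times 3$ picture $(*)$. So the first step is to write a colax-colax morphism purely in terms of its components: functors $U_i$ on each row/column, the commutativities with all $\partial$'s, $p$'s, $\Id$'s, $M$'s, and the colaxity cells for the horizontal ($\id$, $m$) and vertical ($\Id$, $M$) structure, together with the coherence conditions (which, in the symmetric presentation, are the twelve squares analogous to (5)--(14) of Section~\ref{mor} with all laxity arrows reversed).

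Next I would match this against the data in the Definition just stated. The vertical arrows $U_i:\mathbb A_i\tod\mathbb B_i$ being colax functors of double categories is exactly the statement that, when $\mathbb A_i$, $\mathbb B_i$ are expanded as pseudocategories in $\Cat$, we get colax functors at the $\Dcolax$-level; this is the ``colax in the vertical/transversal directions'' half. The strict cells $\delta_i$ and $\pi_i$ of item~(2) encode precisely that the $U_i$ commute (as colax functors) with $\partial_i$ and $p_i$ and that these commutativities are compatible with the colaxity structure --- i.e.\ that $\partial_i$ and $p_i$ underlie a strict morphism in $\Doub$ intertwining $U$, which is what ``$\delta_i$, $\pi_i$ strict in $\Doub$'' means. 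The arbitrary (non-strict) cells $\eta$ and $\mu$ of item~(3) are the colaxity comparisons of the internal functor for the horizontal $\id$ and $m$; the fact that they are \emph{arbitrary} cells of $\Doub$ (not strict) is exactly what makes the morphism colax rather than strict in the horizontal direction. Finally, conditions (4), (5), (6) in the Definition are the internal-functor coherence axioms (associativity and the two unit laws for the comparison against $m$), which are literally the axioms ``$\eta$, $\mu$ satisfy unit and associativity'' from the description of lax/colax morphisms of pseudocategory objects given just after Definition~\ref{vintercat}.

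Having set up the dictionary, the proof is then the observation that the two lists of data and the two lists of axioms are identical under it. I would present this as: (i) data of item~(1) $\leftrightarrow$ the $U_i$ of a colax-colax morphism; (ii) items~(2) $\leftrightarrow$ the commutativity-plus-compatibility with $\partial_i,p_i$, using Proposition on strict cells and pullbacks; (iii) item~(3) $\leftrightarrow$ the colaxity comparisons for $\id,m$; (iv) (4)--(6) $\leftrightarrow$ the coherence axioms; and remark that the vertical-direction colaxity and its coherence are already packaged into ``$U_i$ colax functor of double categories.''

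The main obstacle --- such as it is --- is not logical difficulty but the careful alignment of conventions: one must check that ``strict cell in $\Doub$'' in this section's sense (Definition of horizontally strict cell: $F,G$ strict and $\sigma$ a horizontal identity on objects and vertical arrows) corresponds exactly to the strictness built into a colax-colax morphism's interaction with domains, codomains and pullback projections, and that the direction of $\eta,\mu$ (reversed from the lax case) is the one forced by colaxity in the horizontal direction. Once these conventions are pinned down the correspondence is forced, and the proof is a short paragraph of the form ``compare the two descriptions; they agree.'' I would not grind through the coherence diagrams, since they are term-for-term the internal-functor axioms already recorded.

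\begin{proof}
Unwind both sides into elementary data. By Theorem~\ref{CLM} and the surrounding discussion, a colax-colax morphism ${\ic A}\to{\ic B}$ is an internal colax functor between the pseudocategories
$$
{\mathbb A}_2\ \threepppp/>`>`>/<400>^{} |{} _{} \ {\mathbb A}_1 \ \three/>`<-`>/^{} | {}_{} \ {\mathbb A}_0
\qquad\text{and}\qquad
{\mathbb B}_2\ \threepppp/>`>`>/<400>^{} |{} _{} \ {\mathbb B}_1 \ \three/>`<-`>/^{} | {}_{} \ {\mathbb B}_0
$$
taken in $\Dcolax$. Spelled out, such a thing consists of colax functors $U_i:{\mathbb A}_i\tod{\mathbb B}_i$ (the arrows of $\Dcolax$ are colax functors of double categories), commuting with $\partial_i$ and $p_i$ in the sense that $U_0\partial_i=\partial_i U_1$ and $U_1 p_i = p_i U_2$ as colax functors, together with $2$-cells in $\Dcolax$
$$
\eta:\id\,U_0\Rightarrow U_1\,\id,\qquad \mu:m\,U_2\Rightarrow U_1\,m
$$
(the comparisons of the internal colax functor for the identity and composition of the pseudocategory structure), subject to the internal-functor coherence: one associativity axiom against $m$ and the two unit axioms against $\id$.

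Now compare with the Definition. A colax functor $U_i:{\mathbb A}_i\tod{\mathbb B}_i$ is precisely item~(1). The identities $U_0\partial_i=\partial_i U_1$ and $U_1 p_i=p_i U_2$, together with the requirement that these identifications be compatible with the colaxity comparisons of the $U_i$ (so that $\partial_i$ and $p_i$ underlie strict morphisms in $\Doub$ intertwining the $U$'s), are exactly the assertion that the squares $\delta_i$ and $\pi_i$ of item~(2) are strict cells of $\Doub$: by the definition of a horizontally strict cell, strictness of $\delta_i$ says $\partial_i$ is a strict functor (which it is, $\partial_i$ being strict in the horizontal intercategory) and $U_0\partial_i=\partial_i U_1$ with the cell a horizontal identity on objects and vertical arrows, i.e.\ the colaxity data matches; likewise for $\pi_i$. (Here we use the Proposition on strict cells and the pullback description to know the $p_i$ versions follow from the pullback square (1) of Section~\ref{3x3}.) The $2$-cells $\eta,\mu$ of $\Dcolax$ are, unravelled in $\Doub$, arbitrary cells of the shape displayed in item~(3) --- arbitrary, not strict, precisely because the comparison for $\id$ and $m$ is genuinely colax in the horizontal direction --- so item~(3) matches the comparisons $\eta,\mu$. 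Finally, conditions (4), (5), (6) of the Definition are, respectively, the associativity axiom and the left and right unit axioms for an internal colax functor, which are exactly the ``unit and associativity'' equations required of $\eta,\mu$ in the description of (co)lax morphisms of pseudocategory objects following Definition~\ref{vintercat}. The colaxity of the $U_i$ in the vertical (and transversal) directions, and its coherence, is already packaged inside ``$U_i$ is a colax functor of double categories,'' and hence is not listed separately.

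Thus the data and axioms of a vertical morphism of horizontal intercategories are, term for term, those of a colax-colax morphism of double pseudocategories, under the identifications of Theorem~\ref{equivalence}.
\end{proof}
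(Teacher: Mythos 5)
Your proposal is correct and follows essentially the same route as the paper: both arguments unwind the two notions into their elementary components, identify the four families of structural comparison cells (vertical colaxity coming from the $U_i$ being colax double functors, horizontal colaxity coming from the cells $\eta$ and $\mu$), match conditions (4)--(6) with the internal-functor coherence axioms, and omit the routine verification of the remaining coherence diagrams. The paper's own proof is in fact an even terser sketch of this same dictionary; the only point where you are looser than you should be is in writing the pseudocategory in $\Dcolax$ with the column components ${\mathbb A}_i$ and comparisons against $\id,m$ rather than the row components with comparisons against $\Id,M$ (the transposition of Theorem \ref{equivalence} is being used silently there), but since both unwind to the same elementary data this does not affect the argument.
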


\begin{proof} (Sketch) We simply point out where the structural morphisms (dual of (1)-(4) in Section \ref{mor}) come from and which can be seen to go in the right directions. We omit the routine verification of (the dual) conditions (5)-(14).

Because $ U_2 $ is colax, we get
$$
U(\Id_f) \to \Id_{Uf} \mbox{\ \ and\ \ } U \left(\frac{\alpha}{\ov{\alpha}} \right) \to \frac{U\alpha}{U\ov{\alpha}} .
$$
The cells $ \eta $ and $\mu $ give
$$
\eta_v : U (\id_v) \to \id_{Uv} \mbox{\ \ and\ \ } \mu_{\alpha, \beta} : U (\alpha |\beta) \to U \alpha | U \beta .
$$

\end{proof}

We can also define double cells between horizontal lax (resp.~horizontal colax) and vertical morphisms. (Cells between horizontal lax and horizontal colax have already been studied in Section \ref{pscat}.) Let $ {\ic F} $ and $ {\ic G} $ be horizontal lax morphisms, and $ {\ic U} $ and $ {\ic V} $ vertical morphisms as in
$$
\bfig

\square/>`@{>}|{\bb}`@{>}|{\bb}`>/[{\ic A} `{\ic B} `{\ic C} `{\ic D};{\ic F} `{\ic U} `{\ic V} `{\ic G}]

\place(250,250)[\scriptstyle \pi]

\efig
$$
A cell $ \pi $ as above consists of three cells
$$
\bfig

\square/>`@{>}|{\bb}`@{>}|{\bb}`>/[{\mathbb A}_i `{\mathbb B}_i `{\mathbb C}_i `{\mathbb D}_i;F_i `U_i `V_i `G_i]

\place(250,250)[\scriptstyle \pi_i]

\efig
$$
preserved by domain and codomain (so $ \pi_2 $ is determined by $ \pi_0 $ and $\pi_1 $)
$$
\bfig

\square/>`@{>}|{\bb}`@{>}|{\bb}`>/[{\mathbb A}_1 `{\mathbb A}_0 `{\mathbb C}_1 `{\mathbb C}_0;
        \partial_i `U_1 `U_0 `\partial_i]

\place(250,250)[\scriptstyle \delta_i]

\square(500,0)/>`@{>}|{\bb}`@{>}|{\bb}`>/[{\mathbb A}_0 `{\mathbb B}_0 `{\mathbb C}_0 `{\mathbb D}_0;
        F_0 ``V_0`G_0]

\place(750,250)[\scriptstyle \pi_0]

\square(0,500)/>`=``>/[{\mathbb A}_1 `{\mathbb B}_1 `{\mathbb A}_1 `{\mathbb A}_0;
     F_1 ```]

\square(500,500)/>``=`>/[{\mathbb B}_1 `{\mathbb B}_0 `{\mathbb A}_0 `{\mathbb B}_0;
      \partial_i ```]

\place(500,750)[=]

\place(1250,500)[=]

\square(1500,0)/>`=``>/[{\mathbb C}_1 `{\mathbb D}_1  `{\mathbb C}_1 `{\mathbb C}_0;```\partial_i]

\square(2000,0)/>``=`>/[{\mathbb D}_1  `{\mathbb D}_0 `{\mathbb C}_0 `{\mathbb D}_0;```G_0]

\place(2000,250)[=]

\square(1500,500)/>`@{>}|{\bb}`@{>}|{\bb}`>/[{\mathbb A}_1  `{\mathbb B}_1 `{\mathbb C}_1 `{\mathbb D}_1;F_1`U_1 `V_1`G_1]

\square(2000,500)/>`@{>}|{\bb}`@{>}|{\bb}`>/[{\mathbb B}_1 `{\mathbb B}_0 `{\mathbb D}_1 `{\mathbb D}_0;\partial_i ``V_0`\partial_i]

\place(1750,750)[\scriptstyle \pi_1]

\place(2250,750)[\scriptstyle \delta_i]

\efig
$$
and respecting the structural elements of the morphisms in question
$$
\bfig

\square/>`@{>}|{\bb}`@{>}|{\bb}`>/[{\mathbb A}_0 `{\mathbb A}_1 `{\mathbb C}_0 `{\mathbb C}_1;`U_0`U_1`\id]

\place(250,250)[\scriptstyle \eta]

\square(500,0)/>`@{>}|{\bb}`@{>}|{\bb}`>/[{\mathbb A}_1  `{\mathbb B}_1 `{\mathbb C}_1 `{\mathbb D}_1;``V_1`G_1]

\place(750,250)[\scriptstyle \pi_1]

\square(0,500)/>`=``>/[{\mathbb A}_0 `{\mathbb B}_0 `{\mathbb A}_0 `{\mathbb A}_1;F_0 ```\id]

\square(500,500)/>``=`>/[{\mathbb B}_0 `{\mathbb B}_1 `{\mathbb A}_1 `{\mathbb B}_1;\id ```F_1]

\place(500,750)[\phi_0]

\place(1250,500)[=]

\square(1500,0)/>`=``>/[{\mathbb C}_0 `{\mathbb D}_0 `{\mathbb C}_0 `{\mathbb C}_1;```\id]

\square(2000,0)/>``=`>/[{\mathbb D}_0 `{\mathbb D}_1 `{\mathbb C}_1 `{\mathbb D}_1;```G_1]

\place(2000,250)[\gamma_0]

\square(1500,500)/>`@{>}|{\bb}`@{>}|{\bb}`>/[{\mathbb A}_0 `{\mathbb B}_0 `{\mathbb C}_0 `{\mathbb D}_0;F_0 `U_0`V_0`G_0]

\square(2000,500)/>`@{>}|{\bb}`@{>}|{\bb}`>/[{\mathbb B}_0 `{\mathbb B}_1 `{\mathbb D}_0 `{\mathbb D}_1;\id ``V_1`\id]

\place(1750,750)[\scriptstyle \pi_0]

\place(2250,750)[\scriptstyle \epsilon]

\efig
$$

$$
\bfig

\square/>`@{>}|{\bb}`@{>}|{\bb}`>/[{\mathbb A}_2 `{\mathbb A}_1 `{\mathbb C}_2 `{\mathbb C}_1;`U_2``m]

\place(250,250)[\scriptstyle \mu]

\square(500,0)/>`@{>}|{\bb}`@{>}|{\bb}`>/[{\mathbb A}_1  `{\mathbb B}_1 `{\mathbb C}_1 `{\mathbb D}_1;`U_1`V_1`G_1]

\place(750,250)[\scriptstyle \pi_1]

\square(0,500)/>`=``>/[{\mathbb A}_2 `{\mathbb B}_2 `{\mathbb A}_2 `{\mathbb A}_1;F_2 ```m]

\square(500,500)/>``=`>/[{\mathbb B}_2 `{\mathbb B}_1 `{\mathbb A}_1 `{\mathbb B}_1;m ```F_1]

\place(500,750)[\scriptstyle \phi_2]

\place(1250,500)[=]

\square(1500,0)/>`=``>/[{\mathbb C}_2 `{\mathbb D}_2 `{\mathbb C}_2 `{\mathbb C}_1;```m]

\square(2000,0)/>``=`>/[{\mathbb D}_2 `{\mathbb D}_1 `{\mathbb C}_1 `{\mathbb D}_1;```G_1]

\place(2000,250)[\scriptstyle \gamma_0]

\square(1500,500)/>`@{>}|{\bb}`@{>}|{\bb}`>/[{\mathbb A}_2 `{\mathbb B}_2 `{\mathbb C}_2 `{\mathbb D}_2;F_2 `U_2`V_2`G_2]

\square(2000,500)/>`@{>}|{\bb}`@{>}|{\bb}`>/[{\mathbb B}_2 `{\mathbb B}_1 `{\mathbb D}_2 `{\mathbb D}_1;m ``V_1`m]

\place(1750,750)[\scriptstyle \pi_2]

\place(2250,750)[\scriptstyle \gamma]

\efig
$$

It is routine to check that these cells compose horizontally and vertically giving a strict double category structure.

We leave it to the reader to formulate the similar notion of cell between horizontal colax and vertical morphisms.

Finally, we define cubes
$$
\bfig

\square(0,300)/>`@{>}|{\bb}`@{>}|{\bb}`>/[{\ic A} `{\ic B} `{\ic C} `{\ic D};{\ic F} `{\ic U} `{\ic V} `{\ic G}]

\square(300,0)/``@{>}|{\bb}`>/[`{\ic B'}`{\ic C'}`{\ic D'};` `{\ic V'} `{\ic G'}]

\morphism(500,800)<300,-300>[{\ic B} `{\ic B'};{\ic Q}]

\morphism(0,300)|b|<300,-300>[{\ic C}`{\ic C'};R]

\morphism(500,300)<300,-300>[{\ic D} `{\ic D'};{\ic L}]

\place(250,550)[\scriptstyle \pi]

\place(400,150)[\scriptstyle \rho]

\place(650,400)[\scriptstyle \sigma]

\place(1100,450)[\to]

\square(1400,300)/>`@{>}|{\bb}``/[{\ic A} ` {\ic B} `{\ic C} `;{\ic F}`{\ic U} ` `]

\square(1700,0)/>`@{>}|{\bb}`@{>}|{\bb}`>/[{\ic A'} ` {\ic B'} `{\ic C'} `{\ic D'}; {\ic F'}` {\ic U'}`{\ic V'}`{\ic G'}]

\morphism(1400,800)|a|/>/<300,-300>[{\ic A}`{\ic A'};\rho]

\morphism(1900,800)/>/<300,-300>[{\ic B} ` {\ic B'};Q]

\morphism(1400,300)|b|/>/<300,-300>[{\ic C}`{\ic C'};R]

\place(1950,250)[\scriptstyle \pi']

\place(1550,400)[\scriptstyle \xi]

\place(1800,650)[\scriptstyle \theta]

\efig
$$
There is at most one and there is one if and only if the cube commutes. Each of the faces consists of three cells and to say that the cube commutes means that for $ i = 0, 1, 2 $ we have
$$
\bfig

\square/>`=``>/[{\mathbb C}_i `{\mathbb D}_i `{\mathbb C}_i `{\mathbb C'}_i;```R_i]

\square(500,0)/>``=`>/[{\mathbb D}_i `{\mathbb D'}_i `{\mathbb C'}_i `{\mathbb D'}_i;```G_i]

\place(500,250)[\scriptstyle \rho_i]

\square(0,500)/>`@{>}|{\bb}`@{>}|{\bb}`>/[{\mathbb A}_i `{\mathbb B}_i `{\mathbb C}_i `{\mathbb D}_i;F_i `U_i`V_i`G_i]

\place(250,750)[\scriptstyle \pi_i]

\square(500,500)/>`@{>}|{\bb}`@{>}|{\bb}`>/[{\mathbb B}_i `{\mathbb B'}_i `{\mathbb D}_i `{\mathbb D'}_i;Q_i``V'_i`S_i]

\place(750,750)[\scriptstyle \sigma_i]

\place(1250,500)[=]

\square(1500,0)/>`@{>}|{\bb}`@{>}|{\bb}`>/[{\mathbb A}_i `{\mathbb A'}_i `{\mathbb C}_i `{\mathbb C'}_i;`U_i`U'_i`R_i]

\square(2000,0)/>`@{>}|{\bb}`@{>}|{\bb}`>/[{\mathbb A'}_i `{\mathbb B'}_i`{\mathbb C'}_i `{\mathbb D'}_i;``V'_i`G_i]

\place(1750,250)[\scriptstyle \xi]

\place(2250,250)[\scriptstyle \pi'_i]

\square(1500,500)/>`=``>/[{\mathbb A}_i `{\mathbb B}_i `{\mathbb A}_i `{\mathbb A'}_i;F_i```P_i]

\square(2000,500)/>``=`>/[{\mathbb B}_i `{\mathbb B'}_i`{\mathbb A'}_i `{\mathbb B'}_i;Q_i```F'_i]

\place(2000,750)[\scriptstyle \theta_i]

\efig
$$

In this way we get a strict triple category ${\ic HICat} $ whose objects are horizontal pseudocategories in $ \Doub $. The transversal morphisms are the horizontal colax morphisms, the horizontal morphisms of ${\ic HICat} $ are the horizontal lax morphisms and the vertical morphisms are what we have just called vertical. The cells and cubes are as defined above. There is a similar construction giving a strict triple category $ {\ic VICat} $ of vertical pseudocategories in $ \Doub $. 

The following theorem expresses that our three presentations of intercategory produce the same structure.

\begin{theorem}\label{lastth} The triple categories ${\ic ICat} $, ${\ic HICat} $ and $ {\ic VICat} $ are isomorphic.

\end{theorem}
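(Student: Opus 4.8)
The plan is to verify that, once objects are identified via Theorem~\ref{equivalence}, each of the three triple categories is built from literally the same data at every level --- objects, the three types of morphism, the three types of double cell, and the cubes --- so that the asserted isomorphisms are the canonical comparison maps. No essential computation is involved: everything has been spelled out in Sections~\ref{mor} and~\ref{mystery}, and what remains is bookkeeping of exactly the kind carried out in the proof of Theorem~\ref{equivalence}.

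First, objects: Theorem~\ref{equivalence} already identifies double pseudocategories in $\Cat$, horizontal intercategories and vertical intercategories, and these are respectively the objects of ${\ic ICat}$, ${\ic HICat}$ and ${\ic VICat}$. Next, morphisms: I would match the three types directly. For ${\ic HICat}$ this reads: a horizontal lax morphism is a lax functor of pseudocategories in $\Dlax$, hence a lax-lax morphism of intercategories by Section~\ref{mor}; a horizontal colax morphism is a colax functor of pseudocategories in $\Dlax$, hence a colax-lax morphism by Theorem~\ref{CLM}; and a vertical morphism of horizontal intercategories is a colax-colax morphism by the Proposition identifying the two. In every case the composites agree because composition on both sides is computed componentwise and governed by the strictly associative and unitary transversal structure, so the strict triple-categorical structure is transported on the nose. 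For ${\ic VICat}$ one runs the same argument through the $\Dcolax$ half of the proof of Theorem~\ref{equivalence}, the identification in Theorem~\ref{CLM} of colax-lax morphisms with internal lax functors of pseudocategories in $\Dcolax$, and the dual of the Proposition just cited.

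Then, cells and cubes. The three kinds of double cell of ${\ic ICat}$ --- relating lax-lax with colax-lax, colax-lax with colax-colax, and lax-lax with colax-colax --- are to be matched with the three kinds of ${\ic HICat}$ --- between horizontal lax and horizontal colax, between horizontal colax and vertical, and between horizontal lax and vertical. The first pair is a tautology, both being cells in ${\mathbb P}{\rm s}{\mathbb C}{\rm at}(\Dlax)$. For the other two I would unwind the $\Doub$-valued diagrams of Section~\ref{mystery}: the underlying data is again the data (1)--(4) of Section~\ref{mor}, and the commuting squares and triangles in $\Doub$ rewrite componentwise to the conditions (5)--(8) and $(5')$--$(6')$ listed there. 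Pasting of cells is throughout defined via transversal composition, so it is preserved strictly. For cubes, a cube of ${\ic ICat}$ is declared commutative by the single equation stated at top level in the proof of Theorem~\ref{mainth}, which unpacks exactly into the level-$0$, $1$, $2$ equations defining a commuting cube of ${\ic HICat}$; hence the two notions coincide. The same verifications, dualized, give ${\ic ICat}\cong{\ic VICat}$, and composing yields ${\ic HICat}\cong{\ic VICat}$.

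The hard part is not any single step but the cumulative bookkeeping of the last two paragraphs: one must track the direction of every laxity and comparison cell --- which is exactly what forces the vertical morphisms, rather than a nonexistent lax-colax variant, to occur --- and translate the $\Doub$-diagrammatic definitions of Section~\ref{mystery} into the two-dimensional conditions of Section~\ref{depiction}. As in Theorem~\ref{equivalence}, though, no new idea is needed beyond patient comparison of the two encodings of the same underlying intercategory.
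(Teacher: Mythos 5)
The paper states this theorem with no proof at all, offering it as a summary of identifications already established in Sections \ref{mor} and \ref{mystery}; your proposal supplies exactly the intended argument, assembling Theorem \ref{equivalence}, Theorem \ref{CLM} and the proposition identifying vertical morphisms of horizontal intercategories with colax-colax morphisms into a level-by-level matching of objects, the three kinds of arrows, the three kinds of double cells, and the commuting cubes, with composition preserved on the nose because it is everywhere defined transversally. This is correct and is the same bookkeeping the authors clearly have in mind, at essentially the same level of detail they themselves adopt (e.g.\ they too leave the horizontal-colax/vertical cells and the whole of ${\ic VICat}$ to the reader).
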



\begin{references*}

\bibitem{AM} M.\ Aguiar and S.\ Mahajan, Monoidal functors, species and Hopf algebras, CRM Monograph Series 29, American Mathematical Society, Providence, RI, 2010.

\bibitem{laxtransf} J.\ B\'enabou, Introduction to bicategories, Reports of the Midwest Category Seminar, Lecture Notes in Mathematics, Vol. 47, Springer, Berlin 1967, pp.\ 1-77.

\bibitem{BCZ} G.\ B\"{o}hm, Y.\ Chen, L.\ Zhang,  On Hopf monoids in duoidal categories,  Journal of Algebra,
Vol.~394, 2013, pp.~139-172.

\bibitem{BS} T.\ Booker and R.\ Street, Tannaka duality and convolution for duoidal categories, Theory Appl. Categ. 28 (2013), No.\ 6, pp.\ 166-205.

\bibitem{E}  C.\ Ehresmann, Cat\'egories structur\'ees, Ann. Sci. Ecole Norm. Sup. 80 (1963), 349-425.

\bibitem{GPS} R.\ Gordon, A.\ J.\ Power, and R.\ Street, Coherence for Tricategories, Mem.  Amer. Math. Soc., 1995, Vol.\ 117, no.\ 558.

\bibitem{CC} M.\ Grandis, Higher cospans and weak cubical categories (Cospans in Algebraic Topology, I), Theory Appl. Categ. 18 (2007), no.\ 12, pp.\ 321-347.

\bibitem{GP99} M.\ Grandis and R.\ Par\'e, Limits in double categories, Cahiers Topologie G\'eom.\ Diff\'erentielle Cat\'eg.\ 40 (1999), no.\ 3, pp.\ 162-220.

\bibitem{GP} M.\ Grandis and R.\ Par\'e, Adjoint for double categories, Cahiers Topologie G\'eom.~Diff\'erentielle Cat\'eg.\ 45 (2004), pp.\ 193-240.

\bibitem{PartII} M.\ Grandis and R.\ Par\'e, Intercategories: a framework for three-dimensional category theory, in preparation.

\bibitem{GP-Multiple} M.\ Grandis and R.\ Par\'e, An introduction to multiple categories (On weak and lax multiple categories, I), in preparation.

\bibitem{GP2} M.\ Grandis and R.\ Par\'e, Limits in multiple categories (On weak and lax multiple categories, II), in preparation.

\bibitem{L} S.\ Lack, Icons, Applied Categorical Structures 18(3), 2010, pp.\ 289-307.

\bibitem{PC} N. Martins-Ferreira, Pseudo-Categories, Journal of Homotopy and Related Structures, vol.\ 1(1), 2006, pp.\ 47-78.

\bibitem{V} D.\ Verity, Enriched categories, internal categories and change of base, Reprints in Theory and Applications of Categories, no.\ 20, 2011, pp.\ 1-266.

\end{references*}


\end{document}